\newtheorem{thm}{Theorem}
\newtheorem{defin}{Definition}
\newtheorem{lem}[thm]{Lemma}
\newtheorem{coro}[thm]{Corollary}
\newtheorem{remark}[thm]{Remark}
\newtheorem{exa}{Example}
\newenvironment{example}{\begin{exa} \rm }{\hfill$\Box$ \end{exa}}
\newenvironment{definition}{\begin{defin} \rm }{\end{defin}}
\newenvironment{proof}{\begin{trivlist}\item[] \mbox{\it Proof. }}
{\hfill$\Box$ \end{trivlist}}
\def\caP{{\cal P}(N)} 
\def\conv{\mbox{\rm conv}\,} 
\def\lin{\mbox{\rm Lin}\,} 
\def\ext{\mbox{\sf ext}\,} 
\def\inter{\mbox{\rm relint}\,} 
\def\ci{\perp\!\!\!\perp} 
\def\dv{\mathbb} 
\def\inc{\chi} 
\def\incS{\chi} 
\def\lov{\mbox{\l}} 
\def\ros{\hbar} 
\def\cor{C}
\def\web{W} 
\def\stagame{{\cal G}(N)} 
\def\supmogame{{\cal G}_{\diamond}(N)} 
\def\submogame{\bar{{\cal R}}(N)} 
\def\komstagame{\bar{{\cal R}}(N)} 
\def\submogame{\bar{{\cal R}}_{\circ}(N)} 
\def\eletri{{\cal E}(N)} 
\def\supermo{\lozenge (N)} 
\def\compo{} 
\def\facelatt{{\cal F}(N)} 
\def\VR{\kern-\arraycolsep\strut\vrule &\kern-\arraycolsep} 
\def\vr{\kern-\arraycolsep & \kern-\arraycolsep} 
\def\bbordermatrix#1{\begingroup \m@th
  \@tempdima 4.75\p@
  \setbox\z@\vbox{%
    \def\cr{\crcr\noalign{\kern2\p@\global\let\cr\endline}}%
    \ialign{$##$\hfil\kern2\p@\kern\@tempdima&\thinspace\hfil$##$\hfil
      &&\quad\hfil$##$\hfil\crcr
      \omit\strut\hfil\crcr\noalign{\kern-\baselineskip}%
      #1\crcr\omit\strut\cr}}%
  \setbox\tw@\vbox{\unvcopy\z@\global\setbox\@ne\lastbox}%
  \setbox\tw@\hbox{\unhbox\@ne\unskip\global\setbox\@ne\lastbox}%
  \setbox\tw@\hbox{$\kern\wd\@ne\kern-\@tempdima\left[\kern-\wd\@ne
    \global\setbox\@ne\vbox{\box\@ne\kern2\p@}%
    \vcenter{\kern-\ht\@ne\unvbox\z@\kern-\baselineskip}\,\right]$}%
  \null\;\vbox{\kern\ht\@ne\box\tw@}\endgroup}
\journal{Discrete Applied Mathematics}
\begin{document}

\title{Core-based criterion for extreme supermodular functions}
\author[utia]{Milan Studen\'{y}\corref{cor}}
\ead{studeny@utia.cas.cz}
\author[utia,mi]{Tom\'{a}\v{s} Kroupa}
\ead{kroupa@utia.cas.cz}
\address[utia]{Institute of Information Theory and Automation, Czech Academy of Sciences\\
Pod Vod\'{a}renskou v\v{e}\v{z}\'{\i} 4, 182 08 Prague,  Czech Republic}
\address[mi]{Dipartimento di Matematica ``Federigo Enriques'', Universit\`a degli Studi di Milano,\\ Via Cesare Saldini 50, 20133 Milano, Italy}
\cortext[cor]{Corresponding author}

\begin{abstract}
We give a necessary and sufficient condition for extremality of a supermodular function
based on its min-representation by means of (vertices of) the corresponding core polytope.
The condition leads to solving a certain simple linear equation system determined by the combinatorial core structure.
This result allows us to characterize indecomposability in the class of generalized permutohedra.
We provide an in-depth comparison between our result and the description of extremality in the supermodular/submodular
cone achieved by other researchers.
\end{abstract}

\begin{keyword}
supermodular function \sep submodular function \sep core \sep conditional independence \sep generalized permutohedron \sep indecomposable polytope
\end{keyword}

\maketitle

\section{Introduction}
Supermodular functions have been investigated in various branches of discrete mathematics, namely
in connection with cooperative games \cite{sha72}, conditional independence structures \cite{stu05} and generalized permutohedra \cite{post05}.
Submodular functions, their mirror images, were studied in matroid theory \cite{oxl92} and combinatorial optimization \cite{fuj91,sch03}.
Throughout this paper we regard a supermodular function as a real function defined on the power set of a finite set
of variables and satisfying the supermodularity law.
As the set of (suitably standardized) supermodular functions forms a pointed polyhedral cone in a finite-dimensional space,
it has a finite number of extreme rays. Characterizing extremality in the supermodular cone is of vital importance
for understanding its structure. This task is solved in this paper: our main result, Theorem \ref{thm.1},
provides a~necessary and sufficient condition for extremality of a supermodular function.
The condition has the form of a simple criterion based on solving a system of linear equations.

The research on extreme supermodular functions has been ongoing in a number of different mathematical disciplines.
Let us mention just a few of them to summarize the motivation for this paper and to recall some previous results related to
the supermodular/submodular cone. Our list is by no means exhaustive.
\begin{enumerate}
\item \emph{Coalition games.} The mathematical model of a cooperative game in a coalitional form is
due to von Neumann and Morgenstern \cite{vNM44}. \emph{Convex games} were introduced as supermodular functions on
the class of all coalitions by Shapley \cite{sha72}. Interestingly enough, in the 1972 paper Shapley enumerates
all the extreme rays of the cone of convex games over the four-player set. Nonetheless, he claims that
``For larger~$n$, little is known about the set of all extremals''.
A lot of effort was exerted to describe the geometrical structure of the \emph{core}, which is a non-empty polytope
associated with any (convex) game. The core concept is also among the crucial instruments employed in this paper.
Namely we rely on the characterization of the vertices of the core achieved by Shapley \cite{sha72} and Weber \cite{web88}.
The properties of the core allow one to characterize convex games within the set of all the coalition games; see \cite{ich81,DK00}.
Kuipers et al.\ \cite{KVV10} provided a facet description of the supermodular cone.
Danilov and Koshevoy \cite{DK00} employed the M\"{o}bius inversion in order to express the core of a
(not necessarily supermodular) coalition game as a signed Minkowski sum of standard simplices.
\item \emph{Conditional independence structures.} Conditional independence structures arising
in discrete probabilistic framework belong to a wider class of {\em structural\/} (conditional)
{\em independence models}, which can be interpreted as models produced by supermodular functions; see \cite[\S\,5.4.2]{stu05}.
In fact, the lattice of structural independence \mbox{models} is anti-isomorphic to the face lattice of
the cone of supermodular functions. Thus, characterizing the extreme rays of the cone of standardized
supermodular functions can have both the theoretical significance in characterizing co-atoms of the
lattice of structural models and some practical consequences for conditional independence implication; see \cite[\S\,4.1]{BHLS}.
Extreme supermodular functions also establish quite an important class of inequalities that are used in
(integer) linear programming approach to learning Bayesian network structure; see
\cite[\S\,3.1]{SV11} or \cite[\S\,7]{CHS16}.
There were attempts to classify extreme supermodular functions and the operations with them \cite{SBK00,KSTT12};
see also the open problems from \S\,9.1.2 of \cite{stu05}.
\item \emph{Generalized permutohedra.} These polytopes were introduced by Postnikov \cite{post05,post08}
as the polytopes obtainable by moving vertices of the usual permutohedron while the directions of edges are preserved.
The connection of generalized permutohedra to supermodular and submodular functions has been indicated by Doker \cite{dok11}.
Morton \cite{mort07} earlier discussed the role of generalized permutohedra directly in the study of conditional independence structures.
The class of generalized permutohedra appears to coincide with the class of core polytopes for supermodular functions.
This allows us to derive as a by-product of our result a necessary and sufficient condition for a generalized
permutohedron to be {\em indecomposable} in the sense of Meyer \cite{mey74}. Although the task to characterize
indecomposable generalized permutohedra has not been raised in the literature, we hope it is relevant to this topic.
\item \emph{Combinatorial optimization and matroids.}
The importance of {\em submodular functions}, which can be viewed as mirror images of supermodular functions,
has widely been recognized in combinatorial optimization; see \cite{fuj91}, for example.
In fact, the core polytopes correspond to the so-called {\em base polyhedra} for submodular functions. In this context,
a non-decreasing submodular function is called a \emph{rank function} of a polymatroid. As noted by Schrijver \cite[p.\ 781]{sch03},
already Edmonds \cite{edm70} raised the problem of determining the extreme rays of the cone of rank functions of polymatroids.
Nguyen \cite{ngu78} gave a criterion to recognize whether a rank function of a {\em matroid} \cite{oxl92}
generates an extreme rays of that cone.
One of his followers was Kashiwabara \cite{kas00} who provided more general sufficient conditions for extremality
of certain integer-valued submodular functions in terms of their combinatorial properties.
A few other researchers studied the submodular functions in different frameworks. These functions have wide
applications in computer science as explained by \v{Z}ivn\'{y} et al.\ in \cite[\S\,1.3]{Zivny09}, who also
discussed a conjecture on the extreme rays of the cone of Boolean submodular functions, which
was raised in supermodular context by Promislow and Young \cite{PY05}.

\item \emph{Imprecise probabilities.} Theory of imprecise probabilities  deals with generalized models
of uncertainty reaching beyond the usual assumption in probability theory, namely the additivity axiom.
One of the basic concepts in this theory is that of a {\em coherent lower probability},
which corresponds to a well-known game-theoretical notion of an {\em exact game}; see Corollary 3.3.4 in the book \cite{Walley91} by Walley.
Similarly, the concepts of a {\em credal set\/} and of a {\em 2-monotone lower probability} are the counterparts of
the concepts of a~core polytope and of a (normalized) non-negative supermodular game, respectively.
Quaeghebeur and de~Cooman \cite{QdC08} raised the question of characterizing the extreme lower probabilities
and computed some of them for a small number of variables. Even a more general task has been addressed in the literature:
the characterization of {\em extreme lower previsions\/} given by De~Bock and de~Cooman \cite{dBC15} relates them
to indecomposable compact convex sets in a~finite-dimensional space.
\end{enumerate}

In this study we proceed without having any particular domain of application in mind, but being aware of
the presence of this topic on the crossroad of many different disciplines mentioned above.
We make an ample use of techniques and results from coalition game theory and finite-dimensional convex geometry.
The key technical tool presented herein is a transformation
which associates a certain polytope, called the {\em Weber set}, with every game.
The point is that the Weber set of a supermodular game coincides with its core as defined in coalition game theory.
Our main result, Theorem \ref{thm.1}, basically asserts that a supermodular game is extreme if and only if
the combinatorial structure of its core fully determines its geometry.
The combinatorial concept of a ``core structure" we use here has already appeared in the literature: it was formally defined by Kuipers et al.\/ \cite{KVV10}.

The close relation between supermodular functions and generalized permutohedra pervades this paper. This correspondence is realized via a min-representation of a~supermodular function based on its core. Our Corollary \ref{cor.GP-supermod-core} shows that the cores of supermodular games
coincide with generalized permutohedra. As a consequence of the extremality characterization
we provide a necessary and sufficient condition for indecomposability of generalized permutohedra (see Theorem \ref{thm.ired}).

The article is structured as follows. We fix our notation and terminology in \S\,\ref{sec.prelimi}.
In particular, we introduce the key notions of payoff-array transformation and the Weber set there.
Moreover, we formulate fundamental Lemma~\ref{lem.1} and explain how to recover the vertices of the core for a supermodular game.
Our main result, Theorem \ref{thm.1}, is formulated in \S\,\ref{sec.characterization}.
The use of the main theorem is demonstrated by some examples and
the interpretation of our criterion of extremality is discussed.
The proof of Theorem \ref{thm.1} is postponed to \S\,\ref{sec.proof}.
A~close connection between supermodular games and generalized permutohedra is revealed in \S\,\ref{sec.permutohedra}.
Sections \S\,\ref{sec.RW} and \S\,\ref{sec.remark-other} contain an extensive and detailed discussion on previous results on extremality criteria for supermodular and submodular functions from the literature.
In order to show that our criterion is indeed new, although perhaps analogous in certain aspects to previous criteria,
we analyze the results by Rosenm\"{u}ller and Weidner \cite{RW73,RW74} and by Nguyen \cite{ngu78}.
We conclude the main part of the paper with an~outlook towards further research  in \S\,\ref{sec.concl},
where we also formulate an open problem to characterize the cone of exact games.
Supermodularity of a set function has many equivalent formulations: they are summarized in \ref{sec.apex-supermod}.
In \ref{sec.apex-CI} we explain the significance of supermodular functions in the context of conditional independence
structures. In particular, we show that the face lattice of the supermodular cone is anti-isomorphic to the lattice
of structural independence models.

\section{Preliminaries}\label{sec.prelimi}
We introduce our notation and recall basic concepts in this section.

\subsection{Notation and some basic terminology}\label{ssec.notation}
Let $N$ be a finite non-empty set of {\em variables\,};\footnote{See \ref{sec.apex-CI}, Remark  \ref{rem.why-variable},
for the motivation of our terminology.}
$n:= |N|\geq 2$ and $\caP := \{ A : A\subseteq N\}$.
In cooperative game theory, variables correspond to players, subsets of $N$ are coalitions.
Intentionally, no reference total ordering on the set of variables $N$ is fixed
to avoid possible later misinterpretation. Thus, we regard $N$ as an un-ordered set,
for example $N=\{ a,b,c\}$.

The symbol ${\dv R}^{N}$ will denote the vector space of real $N$-tuples, that is, real vectors with components indexed by $N$;
these are formally mappings from $N$ to the real line ${\dv R}$.
For every set $S\subseteq N$, the {\em incidence vector\/} of $S$ is a vector in ${\dv R}^{N}$ with the coordinates
\begin{equation}
\incS_{S}(j) =\left\{
\begin{array}{cl}
1 & \mbox{if $j\in S$},\\
0 & \mbox{if $j\in N\setminus S$},
\end{array}
\right.
\qquad \mbox{for any $j\in N$.}
\label{eq.incidence}
\end{equation}
Given $v\in {\dv R}^{N}$ and $j\in N$, we will sometimes, when it appears to be convenient,
write $v_{j}$ instead of $v(j)$. Thus, a vector $v\in {\dv R}^{N}$ may alternatively
by written as $[v_{i}]_{i\in N}$.

Occasionally, the symbol $i$ for $i\in N$ will be used as a shorthand for the singleton~$\{ i\}$. Therefore,
$\inc_{i}\equiv\incS_{\{ i\}}\in {\dv R}^{N}$ will denote the zero-one identifier of the variable $i\in N$.
By a~{\em polytope} (in ${\dv R}^{N}$) we mean the convex hull of finitely many points in ${\dv R}^{N}$. The {\em Minkowski sum}
of polytopes $P,Q\subseteq {\dv R}^{N}$ is defined by
$$
P\oplus Q ~:=~ \{\, x+y\in {\dv R}^{N}\,:\ x\in P ~\& ~ y\in Q\,\}.
$$
For every $\emptyset\neq S\subseteq N$, the symbol $\Delta_{S}$ will denote the corresponding {\em standard simplex\/} in ${\dv R}^{N}$, which is the polytope of the form
$\Delta_{S}:= \conv (\{ \inc_{i} :\ i\in S\})$.\footnote{The symbol $\conv (Q)$ denotes the convex hull of $Q\subseteq {\dv R}^{N}$.}

We will also deal with real functions of coalitions, which form the vector space ${\dv R}^{\caP}$,
formally defined as the class of mappings from the power set $\caP$ to ${\dv R}$.
Given a set $S\subseteq N$, the corresponding standard basis vector in ${\dv R}^{\caP}$ will be denoted as follows:
\begin{equation}\label{def:identifier}
\delta_{S}(A) =\left\{
\begin{array}{cl}
1 & \mbox{if $A=S$},\\
0 & \mbox{if $A\neq S$},
\end{array}
\right.
\qquad \mbox{for any $A\subseteq N$.}
\end{equation}
This notation simplifies some formulas for elements in ${\dv R}^{\caP}$. For example, for $i\in N$,
we introduce a special notation for the identifier of supersets of $\{i\}$:
\begin{equation}\label{def:upper}
m^{\uparrow i} \,:= \sum_{S\subseteq N\,:\ i\in S} \delta_{S}\,,
~~ \mbox{that is,}\quad
m^{\uparrow i}(T)=
\left\{
\begin{array}{lc}
1 & \mbox{if $i\in T$},\\
0 & \mbox{if $i\not\in T$},
\end{array}
\right.
~\mbox{for $T\subseteq N$}.
\end{equation}

\begin{defin}[game, core, supermodular game]\rm ~\\
By a {\em game\/} over $N$, which is our shorthand for a cooperative transferable utility game (see \cite{LR89,vNM44}, for example),
we will understand a mapping $m:\caP \to {\dv R}$
satisfying $m(\emptyset )=0$.
The {\em core\/} of a game $m$ is a polytope in ${\dv R}^{N}$, defined by
\begin{equation}
\cor (m) := \{\, [v_{i}]_{i\in N}\in {\dv R}^{N}\, : ~ \sum_{i\in N} v_{i}= m(N) \,~\&~\,
\forall\, S\subseteq N ~\sum_{i\in S} v_{i}\geq m(S)\,\}\,.
\label{eq.def.core}
\end{equation}
A game $m$ is {\em balanced\/} if $\cor (m)\neq\emptyset$. A balanced game $m$ is called {\em exact\/} if
$$
\forall\, S\subseteq N\qquad m(S)=\min_{v\in\cor (m)}\, \sum_{i\in S} v_{i}\,.
$$
A set function $m\in {\dv R}^{\caP}$  is {\em supermodular\/} if
\begin{equation}
\forall\, A,B\subseteq N\quad
m(A)+m(B)\leq m(A\cup B)+ m(A\cap B)\,.
\label{eq.supermodul}
\end{equation}
A game $m$ over $N$ will be called {\em standardized\/} if $m(S)=0$ for $S\subseteq N$, $|S|\leq 1$.
\end{defin}
\noindent
A well-known fact is that every supermodular game is exact and thus necessarily balanced; see \cite[\S\,5]{Schmeidler72}.
A set function $r\in {\dv R}^{\caP}$ is {\em submodular} if $-r$ is supermodular.
A {\em modular\/} set function is a set function which is simultaneously supermodular and submodular. An easy
observation is that the linear space of modular set functions in ${\dv R}^{\caP}$ has the dimension
$n+1$, with a linear basis consisting of a non-zero constant set function and
$\{ m^{\uparrow i}:\, i\in N\}$. The dimension of the linear space of modular games over $N$ is $n$
since the only constant modular game is the zero function.

We also introduce a special notation for several sets of games:
\begin{eqnarray*}
\supermo && \text{is the set of all supermodular games over $N$,}\\
\stagame  && \mbox{is the set of all standardized games over $N$, and}\\
\supmogame && \mbox{is the set of all supermodular standardized games over $N$.}
\end{eqnarray*}
The supermodular cone $\supermo$ is not pointed because it contains the linear subspace of all modular games.
Therefore, we introduce a ``standardization'' procedure which maps $\supermo$ linearly onto the pointed cone $\supmogame$.
Given $m\in\supermo$ we put
\begin{equation}
m^{\star}(S) := m(S)-\sum_{i\in S}\, m(\{i\}) \quad \mbox{for $S\subseteq N$,~~ that is, ~~}
m^{\star}=m-\sum_{i\in N}\, m(\{ i\})\cdot m^{\uparrow i},
\label{eq.standard}
\end{equation}
and observe $m^{\star}\in\supmogame$. Note that this is just one of possible ways to standardize
supermodular functions; see Remark~5.3 in \cite{stu05} for further options.
Since the only standardized modular game is the zero function, $m^{\star}\in\supmogame$ given by
\eqref{eq.standard} is unique such that $m=m^{\star}+g$ for a modular game $g$.

\subsection{Weber set and a fundamental lemma}\label{ssec.payoff-trans}

A crucial technical tool in the proof of our main result is a certain linear transformation
defined here, which is related to the game-theoretical concept of the Weber set \cite{web88}.
\smallskip

Let us denote by $\Upsilon$ the set of all {\em enumerations\/} of
elements in $N$, introduced formally as bijections
$\pi \colon \{ 1,\ldots ,n\} \to N$ from the ordered set
$\{ 1,\ldots ,n\}$ onto $N$. Elements of $\Upsilon$ are in
a one-to-one correspondence with {\em permutations\/} on $N$: provided a particular
distinguished ``reference" enumeration $\upsilon$ is chosen and fixed, any $\pi\in\Upsilon$
is a composition of a uniquely determined permutation on $N$ with $\upsilon$. Alternatively,
elements of $\Upsilon$ can be described by permutations on $\{ 1,\ldots ,n\}$: any
$\pi\in\Upsilon$ is a composition of the reference enumeration $\upsilon$ with a unique permutation on $\{ 1,\ldots ,n\}$.
We intentionally regard $N$ as an un-ordered set, unlike some other authors
who identify $N$ with $\{ 1,\ldots ,n\}$ and work with permutations. Thus, our enumerations have
the same expressive power as the permutations but we avoid ambiguity of composed permutations on $\{ 1,\ldots ,n\}$.
\smallskip

We introduce a special {\em payoff-array transformation},
which assigns to every game $m$ a real $\Upsilon\times N$-array, formally an element $x^{m}\in {\dv R}^{\Upsilon\times N}$,
that is, a function from the Cartesian product $\Upsilon\times N$ to the real line ${\dv R}$.
Specifically, we put
\begin{equation}
x^{m}(\pi ,i) := m\left(\bigcup_{k\leq\pi_{-1}(i)} \{\pi (k)\}\right) -  m\left(\bigcup_{k<\pi_{-1}(i)} \{\pi (k)\}\right),
\label{eq.payoff-array}
\end{equation}
for every $\pi\in\Upsilon$ and every $i\in N$.
For any $\pi\in\Upsilon$, the row-vector $x^{m}(\pi,\ast)\in {\dv R}^{N}$ is
nothing but what is named in game-theoretical literature the {\em marginal vector\/} of $m$
with respect to $\pi$, despite different notation; compare \eqref{eq.payoff-array}
with the definition from \cite{vVHN04}. Thus, the entry $x^{m}(\pi, i)$ can be interpreted as
the payoff to the player $i\in N$ provided that the distribution of the overall worth $m(N)$ is based
on the ordering of players given by the enumeration $\pi\in\Upsilon$.\footnote{This
interpretation comes from an implicit assumption that $A\subseteq B\subseteq N$ implies $m(A)\leq m(B)$.}
Therefore, the $\Upsilon\times N$-array given by \eqref{eq.payoff-array} is a kind of {\em payoff array\/}
in a~general sense, as discussed in \S\,14.5 of \cite{LR89}.

Clearly, the mapping $m\mapsto x^{m}$ is an invertible linear transformation: the linearity
follows directly from the formula \eqref{eq.payoff-array}, the invertibility from the fact that,
for any $\pi\in\Upsilon$, the row $x^{m}(\pi, \ast)$ of the array is in a~one-to-one correspondence with the restriction of $m$ to the maximal chain
${\cal C}_{\pi}$ of sets, defined by
\begin{equation}
{\cal C}_{\pi}:\qquad \emptyset\quad \{\pi (1)\}\quad \{ \pi (1),\pi (2)\}\quad \ldots \quad
\{ \pi(1),\pi (2),\ldots ,\pi (n)\}\equiv N\,.
\label{eq.max-chain}
\end{equation}
Note that we intentionally include the empty set $\emptyset$
into the (maximal) chain ${\cal C}_{\pi}$; this becomes convenient later.
More specifically, observe that one has
\begin{eqnarray*}
x^{m}(\pi , \pi(1)) &=& m(\{\pi (1)\})- m(\emptyset ) ~=~ m(\{\pi (1)\})\,,\\
x^{m}(\pi , \pi(l)) &=& m(\{\pi (1),\ldots ,\pi (l)\})
- m(\{\pi (1),\ldots ,\pi (l-1)\})\quad \mbox{for $2\leq l\leq n$}.
\end{eqnarray*}
Conversely, by inductive consideration one can easily observe
\begin{equation}
\forall ~\mbox{game $m\in {\dv R}^{\caP}$}\quad \forall\, \pi\in\Upsilon
\qquad S\in {\cal C}_{\pi} ~\Rightarrow~ \sum_{i\in S}\, x^{m}(\pi ,i)= m(S)\,.
\label{eq.chain}
\end{equation}

\begin{defin}[Weber set]\label{def.weber-set}\rm ~\\
Every game $m$ over $N$ is assigned the {\em Weber set\/}, defined as the convex hull of the
set of rows of the above-mentioned array $x^{m}$:
$$
\web (m) := \conv (\{ x^{m}(\tau ,\ast)\in {\dv R}^{N} \, :\ \tau\in\Upsilon\})\,.
$$
\end{defin}

A well-known fact is that the inclusion $\cor(m)\subseteq \web (m)$ holds for any game $m$; see Theorem~14 in \cite{web88}. We base our proof on the following fact, which is a corollary of
Theorem \ref{thm:super} in \ref{sec.apex-supermod}.

\begin{lem}\label{lem.1}\rm
A game $m$ over $N$ is supermodular iff the vertices of its Weber set $\web (m)$ give
a {\em min-representation\/} of $m$, more precisely, iff
\begin{equation}
\forall\, S\subseteq N\qquad
m(S) = \min_{\tau\in\Upsilon}\,\, \sum_{i\in S}\, x^{m}(\tau ,i)\,.
\label{eq.min-weber}
\end{equation}
Supposing this is the case (= $m$ is supermodular) one has $\web (m)=\cor (m)$.
\end{lem}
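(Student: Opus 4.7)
My plan is to reduce the lemma to two ingredients that are already available: the chain identity \eqref{eq.chain}, and the characterization of supermodularity via marginal vectors (Theorem~\ref{thm:super} in the appendix, which is the classical Shapley--Ichiishi fact that $m$ is supermodular iff every marginal vector lies in $\cor(m)$).

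\textbf{Step 1: unpack the min-representation.} First I would observe that for any fixed $S\subseteq N$ one always has
\[
\min_{\tau\in\Upsilon}\,\sum_{i\in S} x^{m}(\tau,i)\ \le\ m(S).
\]
Indeed, choose any enumeration $\tau\in\Upsilon$ that lists the elements of $S$ first and the elements of $N\setminus S$ afterwards; then $S\in{\cal C}_{\tau}$, so \eqref{eq.chain} gives $\sum_{i\in S} x^{m}(\tau,i) = m(S)$. Consequently, the min-representation \eqref{eq.min-weber} is equivalent to the system of inequalities
\[
\sum_{i\in S} x^{m}(\tau,i)\ \ge\ m(S)\qquad\forall\,S\subseteq N,\ \forall\,\tau\in\Upsilon.
\]
Combined with the equality $\sum_{i\in N} x^{m}(\tau,i) = m(N)$ (again from \eqref{eq.chain} applied to $N\in{\cal C}_{\tau}$), this is precisely the assertion that \emph{every} marginal vector $x^{m}(\tau,\ast)$ belongs to the core $\cor(m)$.

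\textbf{Step 2: apply the marginal-vector characterization.} By Theorem~\ref{thm:super} the condition ``every $x^{m}(\tau,\ast)$ lies in $\cor(m)$'' is equivalent to supermodularity of $m$. Chaining this with Step~1 yields the stated equivalence between supermodularity of $m$ and the min-representation \eqref{eq.min-weber}.

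\textbf{Step 3: identify $\web(m)$ with $\cor(m)$ in the supermodular case.} Under supermodularity, Step~1 shows that all vertices $x^{m}(\tau,\ast)$ of the Weber set lie in $\cor(m)$; taking convex hulls gives $\web(m)\subseteq\cor(m)$. The reverse inclusion $\cor(m)\subseteq\web(m)$ is the classical result of Weber cited just before the lemma. Hence $\web(m)=\cor(m)$.

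The only substantive ingredient is the Shapley--Ichiishi-type equivalence in Theorem~\ref{thm:super}; everything else is a direct manipulation of the defining formula \eqref{eq.payoff-array} via the chain identity \eqref{eq.chain}. I expect no real obstacle beyond carefully noting that the ``easy'' inequality $\min_{\tau}\sum_{i\in S} x^{m}(\tau,i)\le m(S)$ always holds, which is what turns the min-representation into a pure lower-bound condition on marginal vectors.
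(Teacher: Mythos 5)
Your proposal is correct and follows essentially the same route as the paper, which derives Lemma~\ref{lem.1} from Theorem~\ref{thm:super} via the equivalence of supermodularity with the marginal-vector (Shapley--Ichiishi) condition, using exactly your observation that \eqref{eq.chain} makes the minimum in \eqref{eq.min-weber} always attainable as an upper bound. The only cosmetic difference is in obtaining $\cor(m)=\web(m)$: you invoke Weber's inclusion $\cor(m)\subseteq\web(m)$ together with the newly established reverse inclusion, whereas the paper cites Shapley's result that the extreme points of the core are precisely the marginal vectors; both are standard citations and the argument is equally valid.
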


In \ref{sec.apex-supermod} we have also collected a number of other equivalent definitions of supermodularity of a~game.

\subsection{Obtaining the vertices of the core for a supermodular game}

Another notable fact is that, for a supermodular game $m$, none of the rows in the payoff array $x^{m}$ given by \eqref{eq.payoff-array}
is a non-trivial convex combination of others; in fact, they can only be repeated.
A basic observation to derive this fact is that, if $m\in\supermo$, then
\begin{equation}
\forall\, \tau\in\Upsilon ~~ \forall\, S\subseteq N\quad
\sum_{i\in S}\,  x^{m}(\tau ,i)-m(S)\geq 0\,,\qquad \mbox{which follows from Lemma \ref{lem.1}.}
\label{eq.non-neg}
\end{equation}

\begin{lem}\label{lem.aux}\rm
Given $m\in\supermo$, $\tau\in\Upsilon$ and $\Gamma\subseteq\Upsilon\setminus\{\tau\}$ such that
$$
x^{m}(\tau ,\ast) =\sum_{\pi\in\Gamma}\, \alpha_{\pi}\cdot x^{m}(\pi,\ast) \qquad
\mbox{for some $\alpha_{\pi}>0$ with~ $\sum_{\pi\in\Gamma} \alpha_{\pi}=1$,}
$$
then one has
$$
\forall\, \pi\in\Gamma\quad x^{m}(\pi,\ast) =x^{m}(\tau ,\ast)\,.
$$
\end{lem}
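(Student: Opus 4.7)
The plan is to exploit the chain identity \eqref{eq.chain} applied to $\tau$ together with the non-negativity inequality \eqref{eq.non-neg} available for every $\pi\in\Upsilon$ thanks to supermodularity of $m$. The convex combination hypothesis is a coordinatewise linear identity; summing it over the coordinates in a set $S\in {\cal C}_{\tau}$ will force the inequality \eqref{eq.non-neg} to be tight for every $\pi\in\Gamma$ on the whole chain ${\cal C}_{\tau}$.

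Concretely, I would first fix an arbitrary $S\in {\cal C}_{\tau}$ and compute
$$
m(S) \;=\; \sum_{i\in S} x^{m}(\tau,i) \;=\; \sum_{\pi\in\Gamma}\alpha_{\pi}\cdot\sum_{i\in S}x^{m}(\pi,i) \;\geq\; \sum_{\pi\in\Gamma}\alpha_{\pi}\cdot m(S) \;=\; m(S),
$$
where the first equality uses \eqref{eq.chain} for $\tau$, the second is the coordinatewise assumption summed over $S$, and the inequality is \eqref{eq.non-neg} applied to each $\pi\in\Gamma$. Since all $\alpha_{\pi}$ are strictly positive and convexly sum to one, the chain of (in)equalities being tight forces $\sum_{i\in S}x^{m}(\pi,i)=m(S)$ for every $\pi\in\Gamma$ and every $S\in {\cal C}_{\tau}$.

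Next I would recover the individual entries of the rows $x^{m}(\pi,\ast)$ by telescoping along the chain ${\cal C}_{\tau}$. For every $\pi\in\Gamma$ and every $l\in\{1,\dots,n\}$, setting $S_{l}:=\{\tau(1),\dots,\tau(l)\}$ and $S_{0}:=\emptyset$, one gets
$$
x^{m}(\pi,\tau(l)) \;=\; \sum_{i\in S_{l}}x^{m}(\pi,i) \,-\, \sum_{i\in S_{l-1}}x^{m}(\pi,i) \;=\; m(S_{l})-m(S_{l-1}) \;=\; x^{m}(\tau,\tau(l)),
$$
where the last equality is the defining formula \eqref{eq.payoff-array}. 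As $\tau$ is a bijection of $\{1,\dots,n\}$ onto $N$, this exhausts all coordinates and yields $x^{m}(\pi,\ast)=x^{m}(\tau,\ast)$, concluding the proof.

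I do not anticipate any substantial obstacle; the only step deserving attention is the observation that strict positivity of the weights $\alpha_{\pi}$ is exactly what promotes the aggregate equality $\sum_{\pi}\alpha_{\pi}\sum_{i\in S}x^{m}(\pi,i)=m(S)$ into the individual equalities $\sum_{i\in S}x^{m}(\pi,i)=m(S)$ for each single $\pi\in\Gamma$ when combined with the lower bounds from \eqref{eq.non-neg}.
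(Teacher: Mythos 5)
Your proposal is correct and follows essentially the same route as the paper's own proof: both use \eqref{eq.chain} on the chain ${\cal C}_{\tau}$ together with the non-negativity \eqref{eq.non-neg} to show each term $\sum_{i\in S}x^{m}(\pi,i)-m(S)$ in the convex combination must vanish, and then recover the entries coordinatewise along the chain (your explicit telescoping is just the paper's ``inductive consideration'' spelled out).
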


\begin{proof}
By \eqref{eq.chain}, one can write for any $S\in {\cal C}_{\tau}$:
\begin{eqnarray*}
0&=&\sum_{i\in S}\, x^{m}(\tau ,i)-m(S) = \sum_{i\in S} \sum_{\pi\in\Gamma}\, \alpha_{\pi}\cdot x^{m}(\pi,i) -m(S)\\ & =&
 \sum_{\pi\in\Gamma}\, \alpha_{\pi}\cdot \sum_{i\in S} x^{m}(\pi,i) -m(S)\\
&=&  \sum_{\pi\in\Gamma}\, \alpha_{\pi}\cdot \sum_{i\in S} x^{m}(\pi,i) -  \sum_{\pi\in\Gamma}\, \alpha_{\pi}\cdot m(S) =
 \sum_{\pi\in\Gamma}\, \alpha_{\pi}\cdot \underbrace{\left( \sum_{i\in S} x^{m}(\pi,i) - m(S) \right)}_{\geq 0}\,,
\end{eqnarray*}
where the inner expressions in braces are non-negative by \eqref{eq.non-neg}.
Therefore, since $\alpha_{\pi}>0$ for $\pi\in\Gamma$, they all must vanish. Thus, for any $\pi\in\Gamma$, one has
$$
\forall\, S\in {\cal C}_{\tau}\qquad \sum_{i\in S}\, x^{m}(\pi,i) = m(S)=\sum_{i\in S}\, x^{m}(\tau,i)\,.
$$
Hence, for any fixed $\pi\in\Gamma$, by inductive consideration, $x^{m}(\pi,i) =x^{m}(\tau ,i)$ for $i\in N$.
\end{proof}

This gives a simple procedure to get all vertices of the core for a supermodular
game, mentioned already in 1972 by Shapley; see Theorems 3 and 5 in \cite{sha72}.

\begin{coro}\label{cor.vert-core-supermod}\rm
Given $m\in\supermo$, one can obtain the set of (all) vertices of $\cor (m)$ by discarding
the repeated occurrences of rows in the payoff array \eqref{eq.payoff-array}.
\end{coro}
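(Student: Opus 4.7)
The plan is to combine the two preceding ingredients directly. By Lemma~\ref{lem.1}, supermodularity of $m$ gives $\cor(m)=\web(m)=\conv(\{\, x^{m}(\tau,\ast):\tau\in\Upsilon\,\})$, so $\cor(m)$ is a polytope presented as the convex hull of a finite point set, namely the rows of the payoff array \eqref{eq.payoff-array}. A standard fact from convex geometry then says the vertices of $\cor(m)$ form a subset of this point set; in particular, after discarding duplicates, every vertex is one of the remaining rows. It therefore remains only to verify the converse: each distinct row is actually a vertex.

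For this, I would argue by contradiction. Let $R\subseteq {\dv R}^{N}$ denote the set obtained from $\{x^{m}(\tau,\ast):\tau\in\Upsilon\}$ by discarding repetitions, and suppose that some $r\in R$ is not a vertex of $\cor(m)=\conv(R)$. Then $r$ can be written as a non-trivial convex combination $r=\sum_{r'\in R\setminus\{r\}}\beta_{r'}\cdot r'$ with $\beta_{r'}\geq 0$ and $\sum\beta_{r'}=1$. Fix some $\tau\in\Upsilon$ with $x^{m}(\tau,\ast)=r$ and, for each $r'\in R\setminus\{r\}$ with $\beta_{r'}>0$, choose an enumeration $\pi_{r'}\in\Upsilon$ representing $r'$, i.e.\ $x^{m}(\pi_{r'},\ast)=r'$. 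Since the representatives yield rows different from $r=x^{m}(\tau,\ast)$, one has $\pi_{r'}\neq\tau$, so the set $\Gamma$ of these representatives satisfies $\Gamma\subseteq\Upsilon\setminus\{\tau\}$.

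Now Lemma~\ref{lem.aux}, applied with coefficients $\alpha_{\pi_{r'}}:=\beta_{r'}$, forces $x^{m}(\pi_{r'},\ast)=x^{m}(\tau,\ast)=r$ for every $\pi_{r'}\in\Gamma$. This contradicts the choice $x^{m}(\pi_{r'},\ast)=r'\neq r$ and completes the argument.

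There is no substantive obstacle here; the work has been carried out in Lemma~\ref{lem.aux}, which exploits the non-negativity \eqref{eq.non-neg} of the slacks $\sum_{i\in S}x^{m}(\tau,i)-m(S)$ on the chain ${\cal C}_{\tau}$. The only point deserving mild care is the bookkeeping step of lifting the convex decomposition in $R$ back to a decomposition indexed by enumerations in $\Upsilon\setminus\{\tau\}$, so that Lemma~\ref{lem.aux} can be invoked verbatim.
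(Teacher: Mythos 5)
Your proof is correct and follows essentially the same route as the paper: Lemma~\ref{lem.1} gives $\cor(m)=\web(m)$ so the vertices are among the rows, and Lemma~\ref{lem.aux} shows no distinct row is a convex combination of the others. You merely spell out in more detail (via the contradiction and the lifting of the convex combination back to representatives in $\Upsilon\setminus\{\tau\}$) what the paper states in one sentence.
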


\begin{proof}
By Lemma \ref{lem.1}, $\web (m)=\cor (m)$ and, thus, by Definition \ref{def.weber-set},  the vertex set of $\cor (m)$ is a subset of the set of rows in
\eqref{eq.payoff-array}. By Lemma \ref{lem.aux}, after the removal of all repeated occurrences, none of the rows of the pruned array is a convex combination of others.
\end{proof}

\section{The core-based criterion for extremality}\label{sec.characterization}
The cone $\supmogame$ is a pointed polyhedral cone and, therefore, has finitely many extreme rays.
Our main result is a necessary and sufficient condition for non-zero $m\in\supmogame$
to generate an extreme ray of $\supmogame$.

\begin{remark}\rm\label{rem.exteme}
One can introduce extreme supermodular games as follows:
a game $m\in\supermo$ will be called {\em extreme\/} if $m=m^{\star}+g$,
where $m^{\star}\neq 0$ generates an extreme ray of $\supmogame$  and $g$ is a modular game.
Therefore, to test the extremality of $m$ one first applies \eqref{eq.standard}
to $m$ and then tests whether $m^{\star}$ generates an extreme ray of $\supmogame$.

Another note is that some other authors \cite{RW73,RW74}, \cite[\S\,4]{QdC08}, \cite[\S\,2]{kas00} consider the pointed cone of
non-negative supermodular games instead of $\supmogame$.
Nevertheless, this is only an~inessential modification since, by \eqref{eq.standard}, one can write that cone as the Minkowski sum of $\supmogame$
and the cone spanned by $\{ m^{\uparrow i}:\, i\in N\}$, while such decomposition of any game in that cone is unique.
In particular, the only additional extreme rays of their cone besides the
extreme rays of $\supmogame$ are the rays generated by $m^{\uparrow i}$, $i\in N$.
Further minor technical difference is that some of these authors  regard the zero game as extreme, e.g.\ \cite{kas00}.
\end{remark}

\subsection{Formulation of the main result}\label{ssec.main-formul}
Our criterion is, in fact, a geometrical condition on the
set of vertices  of the core of $m$, denoted below occasionally by ${\cal X}:=\ext (\cor (m))$.\footnote{The symbol $\ext (P)$
is used to denote the set of vertices (= extreme points) of a polytope $P$ in ${\dv R}^{N}$.}
Technically, our criterion is formulated in terms of {\em any real array $x\in {\dv R}^{\Gamma\times N}$} of the form
\begin{equation}
x=[x(\tau ,{i})]_{\tau\in\Gamma,\, i\in N} \quad
\mbox{such that}~~ {\cal X}=\ext (\cor (m)) = \{\, [x(\tau ,{i})]_{i\in N}\in {\dv R}^{N}\, :~ \tau\in\Gamma\,\}\,,
\label{eq.gamma-array}
\end{equation}
that is, the set of {\em distinct rows of\/ $x$} coincides with the set of (all) {\em vertices of\/ $\cor (m)$}. The role of the indexing set $\Gamma$
in \eqref{eq.gamma-array} is auxiliary; neither the order of the rows nor the order of the columns matters.
Also, repeating the rows has no influence, as shown below. Nevertheless, the maximally pruned arrays without repeated rows are preferred.

Given a standardized supermodular game $m\in\supmogame$, the elements of the respective payoff array \eqref{eq.payoff-array} are
non-negative. This is because every standardized supermodular function is
non-decreasing with respect to inclusion. Thus, it follows from Corollary \ref{cor.vert-core-supermod},
that the maximally pruned array \eqref{eq.gamma-array} is unique up to re-ordering of rows and its entries are non-negative.
Moreover, by Lemmas \ref{lem.1} and \ref{lem.aux}, the assumed array \eqref{eq.gamma-array} uniquely determines the game $m$
through \eqref{eq.min-weber} with $\Upsilon$ replaced by $\Gamma$. More specifically, one has
\begin{equation}
\forall\, S\subseteq N\qquad
m(S) = \min_{\tau\in\Gamma}\,\, \sum_{i\in S}\, x(\tau ,i)\,.
\label{eq.min-gamma-array}
\end{equation}

\begin{defin}[null-set, tightness set class]\label{def.tight-sets}\rm ~\\
We introduce, for any row $\tau\in\Gamma$ of the considered array \eqref{eq.gamma-array}
\begin{eqnarray*}
N_{\tau} & := & \{ i\in N :~ x(\tau ,i)=0\}, ~\mbox{the {\em null-set\/} of the row-vector $x(\tau ,\ast)\in {\dv R}^{N}$},\\
{\cal S}^{m}_{\tau} & := & \{ S\subseteq N :~  m(S)=\sum_{i\in S}  x(\tau ,i)\},\\
&& \mbox{the {\em class of the sets\/} at which the row-vector $x(\tau ,\ast)$ is {\em tight\/} with $m$}\,.
\end{eqnarray*}
\end{defin}

A notable fact is that the {\em tightness sets} ${\cal S}^{m}_{\tau}$, for $\tau\in\Gamma$,
can equivalently be introduced solely in terms of the array \eqref{eq.gamma-array}.
Indeed, because of \eqref{eq.min-gamma-array}, one has, for any $\tau\in\Gamma$,
$$
{\cal S}^{m}_{\tau} = {\cal S}^{x}_{\tau} :=~ \{ S\subseteq N :~  \forall\, \pi\in\Gamma\quad
\sum_{i\in S}  x(\tau ,i)\leq \sum_{i\in S}  x(\pi ,i)\,\}\,.
$$
Thus, one can write ${\cal S}_{\tau}^{x}$ instead of
${\cal S}^{m}_{\tau}$. When $x$ is fixed and there is no danger of confusion, we
omit the upper index and write just ${\cal S}_{\tau}$.
Now, we introduce a system of linear constraints for real arrays
$y\in {\dv R}^{\Gamma\times N}$:

\bigskip
\fbox
{
\begin{minipage}{0.9\textwidth}
\begin{itemize}
\item[(a)] $\forall\, \tau\in\Gamma \qquad \text{if $i\in N_{\tau}$, then $y(\tau ,i) =0$,}$
\item[(b)] $\forall S\subseteq N\enskip\forall\, \tau ,\pi\in\Gamma\text{ such that $S\in {\cal S}_{\tau}\cap {\cal S}_{\pi}$}
~\quad \sum_{i\in S} y(\tau ,i) =\sum_{i\in S} y(\pi ,i)$.
\end{itemize}
\end{minipage}
}

\bigskip
\noindent
It is not difficult to observe that the starting array $x\in {\dv R}^{\Gamma\times N}$ from \eqref{eq.gamma-array}
satisfies these linear constraints. Informally, the characterization is that the structural information
given by sets $N_{\tau}$ and ${\cal S}_{\tau}$, for all $\tau\in\Gamma$, already determines the array up to a real multiple.

\begin{thm}\label{thm.1}
Let $m\in\supmogame$ be a non-zero standardized supermodular game. Consider a real array $x\in {\dv R}^{\Gamma\times N}$
of the form \eqref{eq.gamma-array}. Then $m$ generates an extreme ray of
$\supmogame$ iff every real solution $y\in {\dv R}^{\Gamma\times N}$ to (a)-(b) is a multiple of
$x$, i.e.\
$$
\exists\, \alpha\in {\dv R} \ : \qquad
y(\tau , i) =\alpha\cdot x(\tau, i)\quad\mbox{for any $\tau\in\Gamma$ and $i\in N$.}
$$
\end{thm}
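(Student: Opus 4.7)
The plan is to exhibit a linear bijection between the solution space
$L:=\{\, y\in {\dv R}^{\Gamma\times N}:~ y \text{ satisfies~(a),~(b)}\,\}$
and the linear span of the minimal face of $\supmogame$ that contains $m$ in its relative interior.
Once this is established, extremality of $m$ (equivalently, that this face is a ray)
translates precisely into $L$ being one-dimensional, i.e.\ $L={\dv R}\cdot x$;
since $x\in L$ is non-zero, the latter is in turn equivalent to the only $y\in L$ being scalar multiples of $x$, which is exactly the criterion of the theorem.

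I would begin by introducing the linear map $\Phi\colon L \to {\dv R}^{\caP}$ defined by
$\Phi(y)(S):=\sum_{i\in S} y(\tau,i)$ for any $\tau\in\Gamma$ with $S\in {\cal S}^{x}_{\tau}$.
This is well-defined on $L$: some such $\tau$ exists for every $S\subseteq N$ by~\eqref{eq.min-gamma-array},
and condition~(b) forces the value to be independent of the choice of $\tau$. A direct computation gives
$\Phi(x)=m$. Injectivity follows by choosing, for each $\tau\in\Gamma$, an enumeration $\pi_{\tau}\in\Upsilon$
realising the corresponding vertex of $\cor (m)$: every set of the chain ${\cal C}_{\pi_{\tau}}$ lies in ${\cal S}^{x}_{\tau}$ by~\eqref{eq.chain}, so $\Phi(y)=0$ forces $\sum_{i\in S} y(\tau,i)=0$ along the chain, and successive differencing yields $y(\tau,\pi_{\tau}(k))=0$ for every $k$, hence $y\equiv 0$.

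The core of the argument is to identify $\Phi(L)$ with ${\cal T}:=\{\, g\in {\dv R}^{\caP}:~ \exists\,\varepsilon>0,\ m\pm\varepsilon g\in\supmogame\,\}$, which is precisely the linear span of the face in question. For the inclusion $\Phi(L)\subseteq {\cal T}$ I fix $y\in L$, put $g:=\Phi(y)$, and first verify that $g$ is standardised: $g(\emptyset)=0$ is immediate, while $g(\{i\})=0$ follows from condition~(a) together with the observation $\{i\}\in {\cal S}^{x}_{\tau}\Leftrightarrow i\in N_{\tau}$. For supermodularity of $m\pm\varepsilon g$ on a pair $A,B\subseteq N$ I distinguish two cases. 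If $m(A)+m(B)<m(A\cup B)+m(A\cap B)$, the strict slack absorbs $\varepsilon$ for $\varepsilon$ small enough. If $m$ is modular at $(A,B)$, I build an enumeration $\pi\in\Upsilon$ listing first the elements of $A\cap B$, then $A\setminus B$, then $B\setminus A$, and finally $N\setminus (A\cup B)$; then $A\cap B$, $A$ and $A\cup B$ automatically belong to ${\cal C}_{\pi}$, and a telescoping computation using the modularity hypothesis shows that $B\in {\cal S}^{x}_{\tau_{\pi}}$ as well. The defining formula for $g=\Phi(y)$ applied with $\tau=\tau_{\pi}$ then gives $g(S)=\sum_{i\in S} y(\tau_{\pi},i)$ for all four sets, and inclusion--exclusion on indicators yields $g(A)+g(B)=g(A\cup B)+g(A\cap B)$, so the supermodularity inequality for $m\pm\varepsilon g$ holds with equality on $(A,B)$. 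Since only finitely many pairs arise, a common $\varepsilon>0$ works.

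For the reverse inclusion ${\cal T}\subseteq\Phi(L)$, given $g\in {\cal T}$ I would define $y(\tau,i):=x^{g}(\pi_{\tau},i)$ with $\pi_{\tau}$ as above. Independence of the choice of $\pi_{\tau}$, conditions~(a) and~(b), and the identity $\Phi(y)=g$ all follow from a single estimate: non-negativity of the payoff arrays of both $m+\varepsilon g$ and $m-\varepsilon g$ (standardised supermodular functions are non-decreasing) pins down $x^{g}(\pi_{\tau},i)=0$ whenever $x^{m}(\pi_{\tau},i)=0$, and the two min-representation inequalities $\sum_{i\in S}x^{m\pm\varepsilon g}(\pi_{\tau},i)\geq (m\pm\varepsilon g)(S)$ together force $\sum_{i\in S}x^{g}(\pi_{\tau},i)=g(S)$ for every $S\in {\cal S}^{x}_{\tau}$. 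I expect the modular-pair step in the previous paragraph to be the main technical obstacle; once $\Phi\colon L\to {\cal T}$ is shown to be a linear isomorphism, the theorem is immediate, since $m$ generates an extreme ray of $\supmogame$ exactly when ${\cal T}={\dv R}\cdot m$, equivalently $L={\dv R}\cdot x$.
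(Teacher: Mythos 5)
Your proposal is correct; every step I checked goes through. In particular, the two computations you single out do work: for a modular pair $(A,B)$ your enumeration puts $A\cap B$, $A$ and $A\cup B$ on the chain, and $\sum_{i\in B}x^{m}(\pi,i)=m(A\cap B)+m(A\cup B)-m(A)=m(B)$ then shows $B\in{\cal S}_{\tau_{\pi}}$, so all four sets lie in one tightness class and $\Phi(y)$ is modular on the pair; and in the reverse inclusion the two-sided estimates from $m\pm\varepsilon g\in\lozenge_{\diamond}(N)$ do pin down both the zeros and the tight sums. (Two facts you use implicitly and should cite: every marginal vector of a supermodular game actually occurs as a row of $x$, which follows from Lemma~\ref{lem.aux}/Corollary~\ref{cor.vert-core-supermod}; and $\{i\}\in{\cal S}_{\tau}\Leftrightarrow i\in N_{\tau}$ uses standardization.) Your route is organized differently from the paper's. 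The paper first proves the criterion for the full payoff array $x^{m}$ indexed by all of $\Upsilon$ (Lemma~\ref{lem.2}) and then shows separately that pruning repeated rows is harmless (Lemma~\ref{lem.aux2}); you work with an arbitrary array of the form \eqref{eq.gamma-array} from the start and package the whole argument as a linear isomorphism $\Phi$ between the solution space of (a)--(b) and the linear span of the minimal face $F(m)$, which makes the logical structure cleaner and dispenses with the reduction lemma. The substance of the two directions coincides with the paper's: your inclusion ${\cal T}\subseteq\Phi(L)$ uses exactly the non-negativity/tightness estimates of the paper's implication \eqref{eq.non-extreme}$\Rightarrow$\eqref{eq.negative-conje}, while your inclusion $\Phi(L)\subseteq{\cal T}$ replaces the paper's limit argument (verifying the min-representation \eqref{eq.min-weber} for the perturbed game $q_{\varepsilon}$ for small $|\varepsilon|$) by a direct check of the finitely many pairwise supermodularity inequalities, split into slack pairs and modular pairs. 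What your version buys is a conceptually transparent statement ($\dim L=\dim\mbox{\rm Lin}\, F(m)$, so the criterion literally measures the dimension of the face containing $m$) and no detour through $\Upsilon$; what the paper's version buys is that the perturbation argument never needs the case analysis over pairs, only the single uniform min-representation estimate.
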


The proof of Theorem \ref{thm.1} is given in \S\,\ref{sec.proof}.

\subsection{Examples}

First, we illustrate the use of Theorem \ref{thm.1} by simple examples of supermodular games.

\begin{example}\label{exa.positive}
Put $N=\{ a,b,c\}$ and define a game $m$ over $N$ by
$$
m(S):= |S|-1\quad \text{for every non-empty $S\subseteq N$.}
$$
Then $m\in \supmogame$ and the core of $m$ is a translated reflection of the standard simplex:
$$
\cor (m)= \conv (\{\, [0,1,1],[1,0,1],[1,1,0]\,\}).
$$
The respective array $x$ satisfying \eqref{eq.gamma-array}, without repeated rows, has the form
$$
 x =\bbordermatrix{
  & a & b & c \cr
 \pi & 0 & 1 &ÃÂ 1 \cr
 \sigma & 1 & 0 &ÃÂ 1 \cr
 \eta & 1 & 1 &ÃÂ 0 \cr
 },\quad \text{where $\Gamma =\{\pi,\sigma,\eta\}$.}
$$
By Corollary \ref{cor.vert-core-supermod}, it can alternatively be obtained by discarding the repeating occurrences
of rows in the respective payoff array \eqref{eq.payoff-array} with six rows.
The array $x$ yields the following null-sets and classes of tightness sets:
\begin{eqnarray*}
 &N_{\pi}=\{a\} &~~ {\cal S}_{\pi} = \{\, \emptyset,\{a\},\{a,b\},\{a,c\},N\,\},\\
 &N_{\sigma}=\{b\} &~~ {\cal S}_{\sigma}=\{\, \emptyset,\{b\},\{a,b\},\{b,c\},N\,\},\\
 &N_{\eta}=\{c\} &~~ {\cal S}_{\eta}=\{\, \emptyset,\{c\},\{a,c\},\{b,c\},N\,\}.
\end{eqnarray*}
Assume that $y\in{\dv R}^{\Gamma\times N}$ satisfies the conditions (a)-(b)
from \S\,\ref{ssec.main-formul}. Then (a) says that the array $y$ has necessarily the following form:
$$
 y=\bbordermatrix{
  & a & b & c \cr
 \pi & 0 & y(\pi ,b) &Â y(\pi ,c) \cr
 \sigma & y(\sigma ,a) & 0 &Â y(\sigma ,c) \cr
 \eta & y(\eta ,a) & y(\eta ,b) &Â 0 \cr
 }.
$$
The condition (b) then requires
\begin{eqnarray*}
 y(\pi,b) \stackrel{\mbox{\scriptsize (a)}}{=} y(\pi,a)+y(\pi,b)
 & \stackrel{\mbox{\scriptsize (b)}}{=}&
 y(\sigma,a)+y(\sigma,b) \stackrel{\mbox{\scriptsize (a)}}{=} y(\sigma,a)
 \quad \mbox{for $\{a,b\}\in {\cal S}_{\pi}\cap{\cal S}_{\sigma}$,}\\
 y(\pi,c) \stackrel{\mbox{\scriptsize (a)}}{=} y(\pi,a)+y(\pi,c)
 & \stackrel{\mbox{\scriptsize (b)}}{=}&
 y(\eta,a)+y(\eta,c) \stackrel{\mbox{\scriptsize (a)}}{=} y(\eta ,a)
 \quad \mbox{for $\{a,c\}\in {\cal S}_{\pi}\cap{\cal S}_{\eta}$,}\\
 y(\sigma ,c) \stackrel{\mbox{\scriptsize (a)}}{=} y(\sigma,b)+y(\sigma,c)
 & \stackrel{\mbox{\scriptsize (b)}}{=}&
 y(\eta,b)+y(\eta,c) \stackrel{\mbox{\scriptsize (a)}}{=} y(\eta ,b)
 \quad \mbox{for $\{b,c\}\in {\cal S}_{\sigma}\cap{\cal S}_{\eta}$.}
\end{eqnarray*}
Thus, we necessarily have
$$
 y=\bbordermatrix{
  & a & b & c \cr
 \pi & 0 & y_{1} &Â y_{2} \cr
 \sigma & y_{1} & 0 &Â y_{3} \cr
 \eta & y_{2} & y_{3} &Â 0 \cr
 }.
$$
Since $N\in {\cal S}_{\pi}\cap {\cal S}_{\sigma}\cap {\cal S}_{\eta}$ the condition (b)
also gives $\sum_{i\in N}y(\pi,i)=\sum_{i\in N}y(\sigma,i)=\sum_{i\in N}y(\eta,i)$,
that is, $y_{1}+y_{2}=y_{1}+y_{3}=y_{2}+y_{3}$ implying $y_{1}=y_{2}=y_{3}$.
We can conclude that the linear system (a)-(b) has all the solutions
in the form $y=\alpha\cdot x$, where $\alpha\in {\dv R}$. Therefore, $m$ is extreme in $\supmogame$
by Theorem \ref{thm.1}.
\end{example}

The second example shows how non-extremality of a supermodular game can be verified easily.

\begin{example}\label{exa.negative}
Assume that $N=\{a,b,c\}$. Let $\gamma$ be an enumeration of $N$ such that $\gamma(1)=a$,
$\gamma(2)=b$, and $\gamma(3)=c$. Put
$$
t(S)=\left(\sum\limits_{i\in S}\gamma_{-1}(i)\right)^{2} \quad \text{for every $S\subseteq N$.}
$$
Then $t$ is a supermodular game, namely the so-called {\em convex measure game},
discussed already by Shapley \cite[\S\,2.2]{sha72}; note that
we recall a particular extremality criterion for convex measure games in \S\,\ref{ssec.conv-measure-game},.

It is well-known that $t$
lies in the relative interior of the supermodular cone $\supermo$; thus, it is not extreme.
Moreover, $t\notin \supmogame$ since $t(S)\neq 0$ for $S\subseteq N$ with $|S|=1$.
Let's apply the standardization formula \eqref{eq.standard} and put
$$
t^{\star}(S)=t(S)-\sum_{i\in S}\, t(\{i\}) \quad \text{for every $S\subseteq N$.}
$$
In fact, $t^{\star}=22\cdot\delta_{N} +4\cdot\delta_{\{ a,b\}} +6\cdot\delta_{\{ a,c\}} +12\cdot\delta_{\{ b,c\}}$
is in the relative interior of $\supmogame$. The core of $t^{\star}$ is a~hexagon whose vertices are
detailed in the rows of the following array:
$$
x=\bbordermatrix{
  & a & b & c \cr
\mu & 0 & 4 & 18 \cr
\nu & 4Â & 0 & 18\cr
\pi & 0 & 16 &Â 6 \cr
\rho & 6 & 16Â & 0\cr
\sigma & 10 & 0 &Â 12 \cr
\eta & 10 & 12 &Â 0 \cr
},
\quad \text{where $\Gamma=\{\mu,\nu,\pi,\rho,\sigma,\eta\}$.}
$$
The null-sets and the tightness sets are as follows:
\begin{eqnarray*}
 &N_{\mu}=\{a\} &~~ {\cal S}_{\mu} = \{\,\emptyset,\{a\},\{a,b\},N\,\},\\
 &N_{\nu}=\{b\} &~~ {\cal S}_{\nu}=\{\,\emptyset,\{b\},\{a,b\},N\,\},\\
 &N_{\pi}=\{a\} &~~ {\cal S}_{\pi}=\{\,\emptyset,\{a\},\{a,c\},N\,\},\\
 &N_{\rho}=\{c\} &~~ {\cal S}_{\rho} = \{\,\emptyset,\{c\},\{a,c\},N\,\},\\
 &N_{\sigma}=\{b\} &~~ {\cal S}_{\sigma}=\{\,\emptyset,\{b\},\{b,c\},N\,\},\\
 &N_{\eta}=\{c\} &~~ {\cal S}_{\eta}=\{\,\emptyset,\{c\},\{b,c\},N\,\}.
\end{eqnarray*}
Observe that each tightness set correspond to a maximal chain in $\caP$.
It is easy to verify that the array $y\in{\dv R}^{\Gamma\times N}$ given by
$$
 y=\bbordermatrix{
  & a & b & c \cr
 \mu & 0 & 0& 22 \cr
 \nu & 0Â & 0 & 22\cr
 \pi & 0 & 16 &Â 6 \cr
 \rho & 6 & 16Â & 0\cr
 \sigma & 10 & 0 &Â 12 \cr
 \eta & 10 & 12 &Â 0 \cr
 },
$$
meets the conditions (a)-(b) from \S\,\ref{ssec.main-formul} and, despite, $y$ is not a real multiple of $x$.
Thus, $m$ is not an extreme game by Theorem \ref{thm.1}.
\end{example}

The next example is slightly aside the topic because it is an exact game which is {\em not\/} supermodular.
It only illustrates that the condition from  Theorem \ref{thm.1} can be considered outside the supermodular framework,
although our result does not apply in this particular case. We conjecture that our condition from Theorem \ref{thm.1}
is necessary for an exact game to generate an extreme ray of the cone of standardized exact games.

\begin{example}\label{exa.Doker}
Assume $N=\{ a,b,c,d\}$ and consider the game $m$ over $N$ given by
$$
m=4\cdot\delta_{N} + 3\cdot\delta_{\{a,b,c\}} + 2\cdot\delta_{\{a,b,d\}}
+ 2\cdot\delta_{\{a,c,d\}} + 2\cdot\delta_{\{b,c,d\}} +  2\cdot\delta_{\{a,b\}}
+ 2\cdot\delta_{\{a,c\}} + 2\cdot\delta_{\{b,c\}}.
$$
The game $m$ is not supermodular as $m(\{a,c\})+m(\{b,c\})=4>3=m(\{a,b,c\})+m(\{c\})$.
Its core belongs to the plane $x_{d}=4-x_{a}-x_{b}-x_{c}$ and has four facet-defining inequalities:
$$
x_{a}+x_{b}+x_{c}\leq 4,\quad 2\leq x_{a}+x_{b},\quad 2\leq x_{a}+x_{c},\quad 2\leq x_{b}+x_{c}.
$$
One can easily check that the core $\cor (m)$ has four vertices $[x_{a},x_{b},x_{c},x_{d}]$, namely
$[\,1, 1, 1, 1\,]$, $[\,2, 2, 0, 0\,]$, $[\,2, 0, 2, 0\,]$, $[\,0, 2, 2, 0\,]$.
This allows one to verify that every inequality in \eqref{eq.def.core} is tight
for some $v\in {\cal X}=\ext (\cor (m))$. In other words, the game $m$ is exact, which means
$$
m(S) =\min_{v\in\cor (m)}\, \sum_{i\in S}\, v_{i}= \min_{v\in {\cal X}}\, \sum_{i\in S}\, v_{i}\qquad
\mbox{for any $S\subseteq N$.}
$$
Our computation of the extreme rays of the (polyhedral) cone of exact
standardized games over four variables confirmed that $m$ is an extreme exact game over $N$.
Let us arrange the vertices of $\cor (m)$ into a $\Gamma\times N$-array with $\Gamma =\{\pi,\rho,\eta,\sigma\}$:
\begin{equation}
\bbordermatrix{
& a & b & c & d \cr
\pi & 1 & 1 & 1 & 1 \cr
\rho & 2 & 2 & 0 & 0 \cr
\eta & 2 & 0 & 2 & 0 \cr
\sigma & 0 & 2 & 2 & 0 \cr
}.
\label{eq.anti-Doker-example}
\end{equation}
Assume that $y\in{\dv R}^{\Gamma\times N}$ satisfies the conditions (a)-(b)
from \S\,\ref{ssec.main-formul}. The following are the sets $N_{\tau}$ and ${\cal S}_{\tau}$ for
$\tau\in\Gamma$:
\begin{eqnarray*}
&N_{\pi}=\emptyset &~~ {\cal S}_{\pi} = \{\, \emptyset , \{a,b\}, \{a,c\}, \{b,c\}, \{a,b,c\}, N\,\},\\
&N_{\rho}=\{c,d\} &~~ {\cal S}_{\rho} = \{\, \emptyset , \{ c\}, \{ d\}, \{a,c\}, \{b,c\}, \{c,d\}, \{a,c,d\}, \{b,c,d\}, N\,\},\\
&N_{\eta}=\{b,d\} &~~ {\cal S}_{\eta} = \{\, \emptyset ,  \{ b\}, \{ d\}, \{a,b\}, \{b,c\},  \{b,d\}, \{a,b,d\}, \{b,c,d\}, N\,\},\\
&N_{\sigma}=\{a,d\} &~~ {\cal S}_{\sigma} =  \{\, \emptyset ,  \{ a\}, \{ d\}, \{a,b\}, \{a,c\},  \{a,d\}, \{a,b,d\}, \{a,c,d\}, N\,\}.
\end{eqnarray*}
The condition (a) implies that $y$ has the form
$$
\left[
\begin{matrix}
 & y(\pi ,a) & y(\pi, b) & y(\pi,c) & y(\pi ,d)  \\
~& y(\rho ,a)  & y(\rho, b)  & 0 & 0 \\
~& y(\eta ,a) & 0 & y(\eta, c) & 0 \\
~& 0 & y(\sigma, b) & y(\sigma,c) & 0
\end{matrix}
~~~\right].
$$
Now, the condition (b) implies
\begin{eqnarray*}
\{a,b\}\in {\cal S}_{\pi}\cap{\cal S}_{\eta}\cap {\cal S}_{\sigma} &\Rightarrow&
y(\pi,a)+y(\pi, b) = y(\eta, a) =y (\sigma, b) ~=:~ U,\\
\{a,c\}\in {\cal S}_{\pi}\cap{\cal S}_{\rho}\cap {\cal S}_{\sigma} &\Rightarrow&
y(\pi,a)+y(\pi, c) = y(\rho, a) =y (\sigma, c) ~=:~ V,\\
\{b,c\}\in {\cal S}_{\pi}\cap{\cal S}_{\rho}\cap {\cal S}_{\eta} &\Rightarrow&
y(\pi,b)+y(\pi, c) = y(\rho, b) =y (\eta, c) ~=:~ W.
\end{eqnarray*}
Because $N\in {\cal S}_{\pi}\cap{\cal S}_{\rho}\cap {\cal S}_{\eta}\cap {\cal S}_{\sigma}$
the condition (b), moreover, gives
\begin{eqnarray*}
\lefteqn{y(\pi,a)+y(\pi, b)+y(\pi,c)+y(\pi, d)}\\
&=& \hspace*{-2mm}\underbrace{y(\rho, a)+y(\rho, b)}_{V+W}=\underbrace{y (\eta, a)+y (\eta, c)}_{U+W}=
\underbrace{y (\sigma, b)+y (\sigma, c)}_{U+V}= V+W = U+ W = U+ V,
\end{eqnarray*}
implying $U=V=W$. Then again $y(\pi,a)+y(\pi, b)=U=V=y(\pi,a)+y(\pi, c)$ implies
$y(\pi, b)=y(\pi, c)$ and analogously $y(\pi,a)+y(\pi, b)=U=W=y(\pi,b)+y(\pi, c)$ implies
$y(\pi, a)=y(\pi, c)$. Hence,  $y(\pi, a)=y(\pi, b)=y(\pi, c)=\frac{U}{2}$ and
the above equalities give
$$
\frac{3}{2}\cdot U+y(\pi, d)= y(\pi,a)+y(\pi, b)+y(\pi,c)+y(\pi, d)
\stackrel{\mbox{\scriptsize (b)}}{=} y (\sigma, b)+y (\sigma, c)=
U+V=2\cdot U
$$
implying $y(\pi, d)=\frac{U}{2}$. In particular, any solution $y$ to (a)-(b) is the
$\frac{U}{2}$-multiple of the original array \eqref{eq.anti-Doker-example}.
The condition from \S\,\ref{ssec.main-formul} is, therefore, fulfilled.
\end{example}

Nonetheless, the condition from Theorem \ref{thm.1} is not sufficient for an exact game
to generate an extreme ray of the cone of standardized exact games as the following example shows.

\begin{example}\label{exa.Vamosi}
Put $N=\{ a,b,c,d\}$ and consider the following special game over $N$:
$$
m_{\dag} = 4\cdot\delta_{N} + 2\cdot\delta_{\{a,b,c\}} + 2\cdot\delta_{\{a,b,d\}}
+ 2\cdot\delta_{\{a,c,d\}} + 2\cdot\delta_{\{b,c,d\}}
+ \delta_{\{a,b\}} + \delta_{\{a,c\}} + \delta_{\{a,d\}} + \delta_{\{b,c\}} + \delta_{\{b,d\}}.
$$
It is easy to see that $m_{\dag}\in\supmogame$. Actually, one can verify by
Theorem \ref{thm.1} that $m_{\dag}$ generates an extreme ray of $\supmogame$.
In this case, $\cor (m_{\dag})$ has 13 vertices  and 13 facets as well. Specifically, the vertices are
detailed in the following array:
$$
x_{\dag}=\bbordermatrix{
~& a & b & c & d\cr
 & 2 & 2 & 0 & 0\cr
 & 2Â & 1 & 1 & 0\cr
 & 2 & 1 &Â 0 & 1\cr
 & 2 & 0Â & 1 & 1\cr
 & 1 & 2 &Â 1 & 0\cr
 & 1 & 2 &Â 0 & 1\cr
 & 1 & 1 & 2 & 0\cr
 & 1Â & 1 & 0 & 2\cr
 & 1 & 0 &Â 2 & 1\cr
 & 1 & 0Â & 1 & 2\cr
 & 0 & 2 &Â 1 & 1\cr
 & 0 & 1 &Â 2 & 1\cr
 & 0 & 1 &Â 1 & 2\cr
}.
$$
It is tedious but straightforward to verify directly that every solution to (a)-(b) in this case is a multiple of $x_{\dag}$.
Thus, by Theorem \ref{thm.1}, $m_{\dag}$ is an extreme supermodular game.
Nevertheless, $m_{\dag}$ is {\em not} extreme in the cone of standardized exact games. This follows
from the relation $m_{\dag}=m_{0}+m_{1}$ where
\begin{eqnarray*}
m_{0} &=& 2\cdot\delta_{N} + \delta_{\{a,b,c\}} + \delta_{\{a,b,d\}} + \delta_{\{a,c,d\}} + \delta_{\{b,c,d\}}
 + \delta_{\{a,c\}} + \delta_{\{b,c\}} + \delta_{\{b,d\}}\,,\\
m_{1} &=& 2\cdot\delta_{N} + \delta_{\{a,b,c\}} + \delta_{\{a,b,d\}} + \delta_{\{a,c,d\}} + \delta_{\{b,c,d\}}
 + \delta_{\{a,b\}} + \delta_{\{a,d\}}\,.
\end{eqnarray*}
The point is that both $m_{0}$ and $m_{1}$ are exact games, which are not supermodular.
Their cores have three and four vertices, respectively, shown in the following arrays:
$$
x_{0}=\bbordermatrix{
~& a & b & c & d\cr
 & 1 & 1 & 0 & 0\cr
 & 0Â & 1 & 1 & 0\cr
 & 0 & 0 &Â 1 & 1\cr
},
\qquad\quad
x_{1}=\bbordermatrix{
~& a & b & c & d\cr
 & 1 & 1 & 0 & 0\cr
 & 1Â & 0 & 1 & 0\cr
 & 1 & 0 &Â 0 & 1\cr
 & 0 & 1Â & 0 & 1\cr
}.
$$
Actually, our computation of the extreme rays of the cone of exact standardized games confirmed that
both $m_{0}$ and $m_{1}$ generate extreme rays of that cone. We leave it to the reader as an easy
exercise to verify that they both satisfy the condition from Theorem \ref{thm.1}. As concerns their convex combinations
$m_{\lambda}:=(1-\lambda)\cdot m_{0}+ \lambda\cdot m_{1}$, $\lambda\in [0,1]$, the games $m_{\lambda}$
for $\lambda\in (0,1)\setminus\{\frac{1}{2}\}$ have cores with sixteen vertices and none of them satisfies the condition from Theorem \ref{thm.1}.
\end{example}

\subsection{Interpretation of Theorem \ref{thm.1}}\label{ssec.interpret}
What follows is a minor modification of the definition given by Kuipers et al.\/ \cite[\S\,2]{KVV10}.
The below defined concept has been introduced for general balanced games;
however, we believe it is particularly useful and important in the context of supermodular games.

\begin{defin}[core structure]\label{def.core-structure}\rm ~\\
Let $m$ be a balanced game over $N$.
By the {\em core structure\/} of $m$ we will understand a mapping
which assigns to every vertex $v$ of the core $\cor (m)$ the class of the respective tightness sets
(see Definition \ref{def.tight-sets}):
$$
v=[v_{i}]_{i\in N}\in {\cal X}=\ext (\cor (m)) ~\longmapsto~ {\cal S}^{m}_{v} =
\left\{ S\subseteq N :~  m(S)=\sum_{i\in S}  v_{i}\right\}.
$$
\end{defin}

Note that ``indexing" the classes of tightness sets by vertices of $\cor (m)$
only plays auxiliary role. One can alternatively and equivalently introduce
the core structure as a collection of subsets of the power set $\caP$, namely as
$$
\{\, {\cal S}^{m}_{v}\,:~ v\in\ext (\cor (m))\,\,\}\qquad
\mbox{which is basically the definition from \cite[\S\,2]{KVV10}.}
$$
Such an un-indexed collection of subsets of $\caP$ is already a fully
{\em combinatorial} concept, without any obvious geometric meaning.
The aim of our definition is to emphasize the expected geometric interpretation
of such combinatorial concept: the subsets of $\caP$  in the collection
should correspond to the vertices of the core.

A relevant observation is that in case of a supermodular game $m$,
the combinatorial core structure is non-empty finite {\em collection\/} of {\em sub-lattices} of the lattice $(\caP,\subseteq )$. Indeed, Theorem \ref{thm:super}\eqref{i:tesnoset} from \ref{sec.apex-supermod} says that
every ${\cal S}^{m}_{v}$ for $v\in\cor (m)$ is
closed under intersection and union and one also has
$\emptyset,N\in {\cal S}^{m}_{v}$.
Moreover, different vertices of $\cor (m)$ give rise to incomparable
classes of tightness sets. This is because, for each pair of distinct
vertices, a facet of $\cor (m)$ exists containing just one of the vertices and this facet corresponds to a tightness set.
Therefore, for a supermodular
game $m$, the combinatorial view is always compatible with the ``geometric"
interpretation from Definition \ref{def.core-structure}.
\medskip

The standardization procedure \eqref{eq.standard} basically does not change
the core structure. The point is that, in our frame of {\em standardized supermodular\/} games,
the core structure of $m$ already fully determines the system of linear constraints (a)-(b) from \S\,\ref{ssec.main-formul}.
Indeed, assume without loss of generality that $\Gamma=\ext (\cor (m))$ in \eqref{eq.gamma-array} and observe
that $i\in N_{\tau}$ iff $\{i\}\in {\cal S}^{m}_{\tau}$, for $i\in N$ and $\tau\in\Gamma$.
Theorem \ref{thm.1} and the invertibility of the transformation
from \S\,\ref{ssec.payoff-trans} then imply that, provided $m$ generates an extreme ray of $\supmogame$, every $0\neq m^{\prime}\in\supmogame$
sharing the core structure with $m$ is necessarily a positive multiple of $m$, and, therefore,
$\cor (m^{\prime})$ is a dilation of $\cor (m)$. In other words:
\begin{quote}
if $m$ is extreme, then the {\em combinatorial core structure\/} of $m$\\  uniquely determines the
{\em geometric form\/} of the  core.
\end{quote}
We are convinced that this provides a simple and clear interpretation of our result.
Solving the linear equation system (a)-(b) from \S\,\ref{ssec.main-formul} then
allows one to verify/disprove extremality of $m$. The indeterminates in our system (a)-(b) are the pairs $(\tau ,i)$,
where $\tau$ corresponds to a {\em vertex} of the core and $i$ to a {\em variable}. It looks like that
our system substantially differs from former approaches just in this aspect.

\begin{remark}\rm\label{rem.Kuipers}
Kuipers  et al.\/ in their 2010 paper \cite{KVV10} introduced a further relevant concept. Specifically, they name
a game $g$ a {\em limit game} for a balanced game $m$ if, for every extreme point $v\in\ext(\cor(m))$,
an extreme point $w\in\ext (\cor(g))$ exists such that ${\cal S}^{m}_{v}\subseteq {\cal S}^{g}_{w}$, that is,
if $g$ has a coarser core structure than $m$. Also, they consider balanced games to be equivalent if
they have the same core structure.
To illustrate these concepts they show that the class of limit games for a {\em strictly supermodular game} $m$, that is,
$m$ satisfying
$$
m(A\cup B)+m(A\cap B) > m(A)+m(B)\quad \mbox{whenever $A,B\subseteq N$ with $A\setminus B\neq\emptyset\neq B\setminus A$,}
$$
is  just the class of supermodular games. This implies that the equivalent games to such a game $m$ are just the other strictly supermodular games.
Note that the set of all strictly supermodular games coincides with the relative interior of $\supermo$.
The main result of \cite{KVV10} characterizes the set of limit games $g$ for a balanced game $m$ in terms
of linear inequality constraints, which are, also uniquely determined by the core structure of $m$.
In contrast to our system of linear constraints (a)-(b) from \S\,\ref{ssec.main-formul}, these are constraints
on the game values $g(S)$, $S\subseteq N$, and obtaining those inequalities from the core structure is not straightforward.
\end{remark}

\section{Proof of the main result}\label{sec.proof}

We first prove Theorem \ref{thm.1} in a canonical special case when $x\in {\dv R}^{\Gamma\times N}$ is the
payoff array $x^{m}$ given by the formula \eqref{eq.payoff-array}. The following observation on the respective tightness sets
(see Definition \ref{def.tight-sets}) follows from \eqref{eq.chain} and is used repeatedly in the proof below:
\begin{equation}
\forall\,\tau\in\Upsilon \qquad
{\cal C}_{\tau}\subseteq {\cal S}^{m}_{\tau}\,.
\label{eq.chain-2}
\end{equation}

\begin{lem}\rm\label{lem.2}
Assuming $0\neq m\in\supmogame$, let $x=x^{m}$ be the array given by \eqref{eq.payoff-array}.\footnote{The
specialty of this array is that the row-index set $\Gamma$ is the set $\Upsilon$ of all enumerations for $N$.}
Then $m$ is extreme iff every real solution $y\in {\dv R}^{\Upsilon\times N}$ to (a)-(b)
is a multiple of $x^{m}$, that is,
$$
\exists\, \alpha\in {\dv R} \ : \qquad
y(\tau , i) =\alpha\cdot x^{m}(\tau, i)\quad\mbox{for any $\tau\in\Upsilon$ and $i\in N$.}
$$
\end{lem}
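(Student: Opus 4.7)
The plan is to exploit the invertible linear map $m\mapsto x^m$ from \S\,\ref{ssec.payoff-trans} in order to translate extremality of $m$ in the pointed cone $\supmogame$ into a linear statement about the solution space of (a)-(b).

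Recall the standard characterization: a non-zero element $m$ of a pointed polyhedral cone $C$ generates an extreme ray iff the \emph{lineality set} at $m$,
$$
T_m \,:=\, \{\tilde m\in {\dv R}^{\caP}\,:~\exists\,\delta>0\text{ with }m+\epsilon\tilde m\in C\text{ for all }\epsilon\in(-\delta,\delta)\},
$$
equals ${\dv R}\cdot m$; equivalently, the smallest face of $C$ containing $m$ is one-dimensional. Applied to $C=\supmogame$ and combined with the fact that $\tilde m\mapsto x^{\tilde m}$ is a linear bijection on the space of games, extremality of $m$ is equivalent to $\{x^{\tilde m}:\tilde m\in T_m\}={\dv R}\cdot x^m$. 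The task thus reduces to identifying the image $\{x^{\tilde m}:\tilde m\in T_m\}$ with the full solution space of (a)-(b).

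For the inclusion ``$\tilde m\in T_m\Rightarrow y:=x^{\tilde m}$ satisfies (a)-(b)'', observe that every element of $\supmogame$ is non-negative and non-decreasing, so all entries of $x^{m\pm\epsilon\tilde m}=x^m\pm\epsilon x^{\tilde m}$ are non-negative; whenever $x^m(\tau,i)=0$, this forces $x^{\tilde m}(\tau,i)=0$, which is (a). Moreover, for any $\tau$ with $S\in {\cal S}^m_\tau$, we have $\sum_{i\in S}x^m(\tau,i)=m(S)$; applying Lemma~\ref{lem.1} to $m\pm\epsilon\tilde m$ yields $\sum_{i\in S}x^{m\pm\epsilon\tilde m}(\tau,i)\geq (m\pm\epsilon\tilde m)(S)$, and combining the two signs forces $\sum_{i\in S}x^{\tilde m}(\tau,i)=\tilde m(S)$. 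This value is independent of the choice of such $\tau$, giving (b). Conversely, suppose $y$ satisfies (a)-(b). By \eqref{eq.chain-2}, ${\cal C}_\tau\subseteq {\cal S}^m_\tau$; together with (b), this shows that $\sum_{i\in S}y(\tau,i)$ is the same for all $\tau$ with $S\in {\cal C}_\tau$, so defining $\tilde m(S)$ as this common value gives a well-defined game with $y=x^{\tilde m}$ (by telescoping along any chain). Since $\tau(1)\in N_\tau$ (because $m(\{\tau(1)\})=0$), condition (a) forces $\tilde m(\{i\})=0$ for each $i\in N$, so $\tilde m$ is standardized.

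It remains to verify that $m\pm\epsilon\tilde m\in\supmogame$ for all sufficiently small $\epsilon>0$. By Lemma~\ref{lem.1}, this reduces to the min-representation $(m\pm\epsilon\tilde m)(S)=\min_\tau\sum_{i\in S}x^{m\pm\epsilon\tilde m}(\tau,i)$ for every $S\subseteq N$. For $\tau$ with $S\in {\cal S}^m_\tau$, equality is already guaranteed by (b) (since the common value $\sum_{i\in S}y(\tau,i)$ equals $\tilde m(S)$ by telescoping along any chain containing $S$), while for $\tau$ with $S\notin {\cal S}^m_\tau$ the strict gap $\sum_{i\in S}x^m(\tau,i)-m(S)>0$ absorbs the $\epsilon$-perturbation. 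The main technical obstacle is precisely this last verification: a single $\delta>0$ must work simultaneously over all pairs $(S,\tau)$, which follows by taking $\delta$ smaller than the minimum of the ratios (gap)/(perturbation-size) over the finite collection of ``slack'' pairs. The routine observation that $x^m$ itself satisfies (a)-(b) completes the argument by showing ${\dv R}\cdot x^m$ is always contained in the solution space.
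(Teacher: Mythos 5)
Your proof is correct and follows essentially the same route as the paper's: both directions rest on the invertible payoff-array transformation, the non-negativity squeeze for (a), the tightness squeeze for (b), the chain-consistency reconstruction of a game $\tilde m$ with $y=x^{\tilde m}$, and the $\varepsilon$-perturbation argument using the strict gap for non-tight pairs. The only difference is packaging — you phrase extremality via the lineality set $T_m={\dv R}\cdot m$ whereas the paper uses the equivalent midpoint decomposition $m=\frac{1}{2}r+\frac{1}{2}s$ — and the underlying computations are identical.
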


\begin{proof}
We show that the negation of the condition above, namely, the condition
\begin{equation}
\exists ~ \mbox{a solution $y\in {\dv R}^{\Upsilon\times N}$ of (a)-(b)} \ :
\qquad y\not\in \lin (x^{m})\,,
\label{eq.negative-conje}
\end{equation}
where $\lin (\ast)$ denotes the linear hull (in the respective space), is equivalent to the condition
\begin{equation}
\exists ~ \mbox{non-zero $r,s\in\supmogame$} \ :\qquad \lin (r)\neq\lin (s) \quad\mbox{and}\quad
 m=\frac{1}{2}\cdot r+ \frac{1}{2}\cdot s\,,
\label{eq.non-extreme}
\end{equation}
which is one of possible formulations of non-extremality of $m$ in the cone $\supmogame$.

To show \eqref{eq.non-extreme}$\Rightarrow$\eqref{eq.negative-conje} realize that the mapping
$m\mapsto x^{m}$ is an invertible linear transformation, which implies that $x^{r}$ and $x^{s}$ are both non-zero, $\lin (x^{r})\neq\lin (x^{s})$ and
\begin{equation}
x^{m}=\frac{1}{2}\cdot x^{r}+ \frac{1}{2}\cdot x^{s}\,.
\label{eq.decomp}
\end{equation}
We show that both $x^{r}$ and $x^{s}$ solve (a)-(b); since one of them is outside
$\lin (x^{m})$, it gives \eqref{eq.negative-conje}. One can derive that conclusion from the
fact that $x^{m}$ satisfies (a)-(b) using \eqref{eq.decomp}. To show (a) realize that,
for $\tau\in\Upsilon$ and $i\in N_{\tau}$ one has
$$
0 = x^{m}(\tau ,i) = \frac{1}{2}\cdot \underbrace{x^{r}(\tau ,i)}_{\geq 0}+ \frac{1}{2}\cdot \underbrace{x^{s}(\tau ,i)}_{\geq 0}\,,
$$
where the both terms on the right-hand side are non-negative. Indeed, realize we know $r,s\in\supmogame$, and,
therefore, their payoff-arrays are non-negative. Therefore, they must vanish: $x^{r}(\tau ,i)=0=x^{s}(\tau ,i)$.

As concerns (b), for $S\subseteq N$ and $\tau ,\pi\in\Upsilon$ such that $S\in {\cal S}_{\tau}\cap {\cal S}_{\pi}$
we first particularly write for $\tau$: because $S\in {\cal S}_{\tau}\equiv {\cal S}^{m}_{\tau}$ one has by \eqref{eq.decomp}
$$
0 = \sum_{i\in S}  x^{m}(\tau ,i)- m(S)=
\frac{1}{2}\cdot \underbrace{\left(\sum_{i\in S} x^{r}(\tau ,i)- r(S)\right)}_{\geq 0}+ \frac{1}{2}\cdot \underbrace{\left(\sum_{i\in S} x^{s}(\tau ,i)- s(S)\right)}_{\geq 0}\,,
$$
where the terms on the right-hand side must be non-negative by
\eqref{eq.non-neg}; realize $r,s\in\supmogame$. This gives both
$\sum_{i\in S} x^{r}(\tau ,i)=r(S)$ and $\sum_{i\in S} x^{s}(\tau ,i)=s(S)$.
The second step is to repeat the same consideration for $\pi$ in place of $\tau$ and derive both
$\sum_{i\in S} x^{r}(\pi ,i)=r(S)$ and $\sum_{i\in S} x^{s}(\pi ,i)=s(S)$.
Hence, $\sum_{i\in S} x^{r}(\tau ,i)=r(S)=\sum_{i\in S} x^{r}(\pi ,i)$
and analogously $\sum_{i\in S} x^{s}(\tau ,i)=s(S)=\sum_{i\in S} x^{s}(\pi ,i)$.
Thus, $x^{r}$ and $x^{s}$ both satisfy (b), which completes the proof of
\eqref{eq.non-extreme}$\Rightarrow$\eqref{eq.negative-conje}.
\smallskip

To show \eqref{eq.negative-conje}$\Rightarrow$\eqref{eq.non-extreme}, choose and fix
$y\in {\dv R}^{\Upsilon\times N}$ mentioned in \eqref{eq.negative-conje}.
The first step is to show that a game $t$ over $N$ exists such that $y=x^{t}$, that is,
$y$ is the range of our payoff-array transformation.
Recall from \S\,\ref{ssec.payoff-trans} that, provided $y=x^{t}$, for every $\pi\in\Upsilon$,
the respective row $y(\pi ,\ast)$ of the array $y\in {\dv R}^{\Upsilon\times N}$ determines
(and is determined by) the values of $t$ on the (maximal) chain ${\cal C}_{\pi}$ by the relation
\eqref{eq.chain}, that is,
$$
t(S) = \sum_{i\in S}\ y(\pi ,i)\qquad
\mbox{for every $S\in {\cal C}_{\pi}$}.
$$
Therefore, the definition of a desired game $t$ with $y=x^{t}$ is correct if and only if the
following {\em consistency condition\/} is satisfied:
\begin{equation}
\forall\, \tau ,\pi\in\Upsilon \quad \forall\, S\in {\cal C}_{\tau}\cap {\cal C}_{\pi}\qquad
\sum_{i\in S}\ y(\tau ,i) = \sum_{i\in S}\ y(\pi ,i)\,.
\label{eq.consistency}
\end{equation}
To verify \eqref{eq.consistency} realize that, for  $\tau ,\pi\in\Upsilon$ and
$S\in {\cal C}_{\tau}\cap {\cal C}_{\pi}$, \eqref{eq.chain-2} gives
${\cal C}_{\tau}\subseteq {\cal S}^{m}_{\tau}= {\cal S}_{\tau}$ and
${\cal C}_{\pi}\subseteq {\cal S}^{m}_{\pi}= {\cal S}_{\pi}$ and then
the condition (b) for $y$ implies $\sum_{i\in S} y(\tau ,i) = \sum_{i\in S} y(\pi ,i)$,
which was desired.

The second step is to verify that $t$ is standardized. Since $m$ is standardized, for any $\pi\in\Upsilon$,
one has $x^{m}(\pi ,\pi (1))=m(\{\pi (1)\})=0$, implying $\pi (1)\in N_{\pi}$. Then the condition (a) for $y$
implies $y(\pi ,\pi (1))=0$, that is, $t(\{\pi (1)\})=0$. In particular,
$t(S)=0$ for any $S\subseteq N$ with $|S|\leq 1$ and we know $t\in\stagame$.

The third step is to consider the line $L$ in $\stagame$ passing through $t$ and $m$, namely the collection of vectors
$$
q_{\varepsilon} := (1-\varepsilon)\cdot m + \varepsilon\cdot t\qquad
\mbox{where $\varepsilon\in {\dv R}$},
$$
and show that, for sufficiently small $\varepsilon$, one has $q_{\varepsilon}\in\supmogame$.
Since the payoff-array transformation is linear, for any $\varepsilon\in {\dv R}$, it transforms $q_{\varepsilon}$ to
$$
z_{\varepsilon} := (1-\varepsilon)\cdot x^{m} + \varepsilon\cdot y\,,
$$
that is, $L$ is transformed to the line in ${\dv R}^{\Upsilon\times N}$ passing through $y$ and
$x^{m}$. The condition \eqref{eq.chain} applied to elements of $L$ gives
$$
\forall\,\varepsilon\in {\dv R}\quad
\forall\,\pi\in\Upsilon\qquad
S\in {\cal C}_{\pi} ~\Rightarrow~ \sum_{i\in S}\, z_{\varepsilon}(\pi,i) =q_{\varepsilon}(S)\,.
$$
Further considerations are done with a fixed set $S\subseteq N$. One can certainly find and fix
$\pi\in\Upsilon$ with $S\in {\cal C}_{\pi}$. By \eqref{eq.chain-2}, one also has
$S\in {\cal S}^{m}_{\pi}= {\cal S}_{\pi}$. Since the conditions (a)-(b) define a
linear space in ${\dv R}^{\Upsilon\times N}$ and both $x^{m}$ and $y$ satisfy them, for any $\varepsilon\in {\dv R}$,
the vector $z_{\varepsilon}$ must satisfy them as well. Thus, the condition
(b), applied to $z_{\varepsilon}$, allows one to derive
$$
\forall\,\varepsilon\in {\dv R}\quad
\forall\,\tau\in\Upsilon ~
\mbox{with $S\in {\cal S}_{\tau}$}\qquad  \mbox{one has}~~
\sum_{i\in S}\, z_{\varepsilon}(\tau,i) =\sum_{i\in S}\, z_{\varepsilon}(\pi,i) =q_{\varepsilon}(S)\,.
$$
Now, consider $\tau\in\Upsilon$ with $S\not\in {\cal S}_{\tau}= {\cal S}^{m}_{\tau}$ instead.
By \eqref{eq.non-neg} combined with the definition of ${\cal S}^{m}_{\tau}$
and then by \eqref{eq.chain} applied to $m$ we get
$$
0<\sum_{i\in S} x^{m}(\tau,i)-m(S)=\sum_{i\in S}\, x^{m}(\tau,i)-\sum_{i\in S}\, x^{m}(\pi,i)\,.
$$
This allows one to write, for every $\varepsilon\in {\dv R}$, by \eqref{eq.chain} applied to $q_{\varepsilon}$,
\begin{eqnarray*}
\lefteqn{\sum_{i\in S}\, z_{\varepsilon}(\tau,i)- q_{\varepsilon}(S) =
\sum_{i\in S}\, z_{\varepsilon}(\tau,i)-\sum_{i\in S}\, z_{\varepsilon}(\pi,i)}\\
&=&
(1-\varepsilon )\cdot \left(\sum_{i\in S}\, x^{m}(\tau,i)-\sum_{i\in S}\, x^{m}(\pi,i)\right)
+\varepsilon\cdot \left(\sum_{i\in S}\, y(\tau,i)-\sum_{i\in S}\, y(\pi,i)\right)\,,
\end{eqnarray*}
and observe that the limit of this expression with $\varepsilon$ tending to zero is
positive. Therefore, for sufficiently small $|\varepsilon |$, one has the following:
$$
\forall\,\tau\in\Upsilon ~
\mbox{with $S\not\in {\cal S}_{\tau}$}\qquad
\mbox{one has} ~~ \sum_{i\in S}\, z_{\varepsilon}(\tau,i) >q_{\varepsilon}(S)\,.
$$
In particular, for sufficiently small $|\varepsilon |$, one has
$$
q_{\varepsilon}(S) = \min_{\tau\in\Upsilon}\, \sum_{i\in S}\, z_{\varepsilon}(\tau,i)\,,
$$
and, since this consideration can be done for any $S\subseteq N$, one can observe that
the condition \eqref{eq.min-weber} holds for $q_{\varepsilon}$
for sufficiently small $|\varepsilon |$, that is, $q_{\varepsilon}\in\supmogame$
by Lemma \ref{lem.1}.

Thus, there exists $0<\varepsilon$ such that both $r:=(1-\varepsilon )\cdot m+\varepsilon\cdot t$ and
$s:=(1+\varepsilon)\cdot m-\varepsilon\cdot t$ belong to $\supmogame$. Clearly,
$m=\frac{1}{2}\cdot r+ \frac{1}{2}\cdot s$. The line $L$ does not contain the zero vector $0$, as otherwise,
by linearity of the payoff-array transformation, one derives a contradictory conclusion $y\in\lin (x^{m})$.
Hence, $r$ and $s$ are non-zero. The fact $0\not\in L$ also gives the observation $\lin (r)\neq\lin (s)$.
Altogether, the condition \eqref{eq.non-extreme} has been verified.
\end{proof}

We show now that the removal of repeated row-occurrences in the array from Theorem~\ref{thm.1}
has no influence. Observe that any array $x\in {\dv R}^{\Gamma\times N}$ of the form \eqref{eq.gamma-array}
satisfies
\begin{equation}
\forall\,\tau\in\Gamma \quad
\exists\,\sigma\in\Upsilon \qquad
{\cal C}_{\sigma}\subseteq {\cal S}_{\tau}\,,
\label{eq.chain-3}
\end{equation}
which follows from \eqref{eq.chain-2} using Corollary \ref{cor.vert-core-supermod}.

\begin{lem}\label{lem.aux2}\rm
Under the assumptions of Theorem \ref{thm.1}, 
consider $\Gamma^{\prime}\subseteq\Gamma$ such that
\begin{itemize}
\item[(i)] $\forall\,\pi ,\tau\in\Gamma^{\prime}\qquad x(\pi,\ast)\neq x(\tau ,\ast)$,
\item[(ii)] $\forall\,\tau\in\Gamma\setminus\Gamma^{\prime} \quad \exists\,\pi\in\Gamma^{\prime}\qquad x(\pi,\ast)=x(\tau ,\ast)$.
\end{itemize}
Then, one can replace $\Gamma$ by $\Gamma^{\prime}$ in the condition from Theorem \ref{thm.1}.
\end{lem}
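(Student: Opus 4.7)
My plan is to establish a linear bijection between the solution space of (a)-(b) on $\Gamma$ and on $\Gamma'$ that sends $x$ to $x|_{\Gamma'}$, so the extremality condition in Theorem \ref{thm.1} transfers verbatim. The key observation is that whenever $\pi,\tau\in\Gamma$ satisfy $x(\pi,\ast)=x(\tau,\ast)$, one has $N_\pi=N_\tau$ and ${\cal S}_\pi={\cal S}_\tau$ (both are determined by the row), and moreover any solution $y$ of (a)-(b) on $\Gamma$ satisfies $y(\pi,\ast)=y(\tau,\ast)$. Indeed, by \eqref{eq.chain-3} there is $\sigma\in\Upsilon$ with ${\cal C}_\sigma\subseteq{\cal S}_\pi={\cal S}_\tau$; applying (b) to the pair $(\pi,\tau)$ on each $S\in{\cal C}_\sigma$ gives $\sum_{i\in S}y(\pi,i)=\sum_{i\in S}y(\tau,i)$, and taking differences of cumulative sums along the maximal chain forces $y(\pi,\sigma(k))=y(\tau,\sigma(k))$ for every $k$, hence the two rows agree.

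Using (i)-(ii), I then set up the restriction map $R\colon y\mapsto y|_{\Gamma'}$ and the extension map $E\colon y'\mapsto\tilde y$, where $\tilde y(\tau,\ast):=y'(\pi_\tau,\ast)$ for the unique $\pi_\tau\in\Gamma'$ with $x(\pi_\tau,\ast)=x(\tau,\ast)$. That $R$ sends solutions on $\Gamma$ to solutions on $\Gamma'$ is immediate, since (a)-(b) for indices in $\Gamma'$ depend only on their own null-sets and tightness classes, which are inherited from $\Gamma$. For $E$, I verify (a)-(b) by a short case analysis on whether each index involved lies in $\Gamma'$; in every case the relation for $\tilde y$ reduces to an instance of (a)-(b) for $y'$ via the paired representatives $\pi_\tau\in\Gamma'$, because $N_\tau=N_{\pi_\tau}$ and ${\cal S}_\tau={\cal S}_{\pi_\tau}$. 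By the key observation, $E\circ R$ is the identity on the solution set on $\Gamma$, while $R\circ E$ is trivially the identity on ${\dv R}^{\Gamma'\times N}$; both maps are linear and satisfy $R(x)=x|_{\Gamma'}$ and $E(x|_{\Gamma'})=x$.

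Consequently, ``every solution to (a)-(b) on $\Gamma$ is a multiple of $x$'' is equivalent to ``every solution to (a)-(b) on $\Gamma'$ is a multiple of $x|_{\Gamma'}$'', which is exactly what must be shown. The only slightly delicate step I foresee is the verification of (b) under $E$ when both indices $\tau,\rho$ lie outside $\Gamma'$; but this is dispatched by substituting their paired representatives in $\Gamma'$ and invoking (b) for $y'$, since the tightness classes are preserved under the pairing.
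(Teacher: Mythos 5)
Your proposal is correct and follows essentially the same route as the paper: one direction restricts a solution to $\Gamma'$ and uses \eqref{eq.chain-3} together with condition (b) along a maximal chain to force equal rows for indices with equal $x$-rows, while the other direction extends a solution from $\Gamma'$ to $\Gamma$ by copying rows of paired representatives. Your packaging of these two steps as mutually inverse linear maps $R$ and $E$ is a tidy reformulation, but the substance coincides with the paper's argument.
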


\begin{proof}
Assuming the condition for $\Gamma\times N$-array holds we observe easily that its restriction
to $\Gamma^{\prime}\times N$ satisfies it relative to $\Gamma^{\prime}\times N$. Indeed, if $y\in {\dv R}^{\Gamma^{\prime}\times N}$ satisfies (a)-(b), its extension based on (ii) satisfies (a)-(b) with respect to $\Gamma\times N$:
for $\tau\in\Gamma\setminus\Gamma^{\prime}$, we choose (and fix) $\pi\in\Gamma^{\prime}$ with $x(\pi,\ast)=x(\tau ,\ast)$ and put $y(\tau ,\ast):=y(\pi,\ast)$. The extension must be a multiple of (extended) $x$
and the same holds for their restrictions.

Conversely, assuming the condition from Theorem \ref{thm.1} holds for
$\Gamma^{\prime}\times N$-array, we verify it for $\Gamma\times N$.
If $y\in {\dv R}^{\Gamma\times N}$ satisfies (a)-(b), its restriction to
$\Gamma^{\prime}\times N$ satisfies them with respect to $\Gamma^{\prime}\times N$ and must be a
$\alpha$-multiple of the respective restriction of $x$, for some $\alpha\in {\dv R}$.
By (ii), for any $\tau\in \Gamma\setminus\Gamma^{\prime}$, we find (and fix) $\pi\in\Gamma^{\prime}$
such that $x(\pi,\ast)=x(\tau ,\ast)$, which implies
${\cal S}_{\tau}={\cal S}_{\pi}$. By \eqref{eq.chain-3}, choose
$\sigma\in\Upsilon$ with ${\cal C}_{\sigma}\subseteq {\cal S}_{\tau}$ and,
by the condition (b) for $y$, get
$$
\forall\, S\in {\cal C}_{\sigma}\subseteq {\cal S}_{\tau}={\cal S}_{\pi}\qquad
\sum_{i\in S}\, y(\tau ,i)=\sum_{i\in S}\, y(\pi ,i)\,,
$$
which allow one to derive, by inductive consideration, that $y(\tau ,\ast)=y(\pi ,\ast)$.
Therefore, $y$~must coincide with $\alpha\cdot x$.
\end{proof}

In conclusion, Theorem \ref{thm.1} now follows from Lemma \ref{lem.2}, Corollary \ref{cor.vert-core-supermod}
and Lemma \ref{lem.aux2}.

\section{Relation to generalized permutohedra}\label{sec.permutohedra}

Relatively recently, a highly relevant concept of a generalized permutohedron
has been introduced and studied by Postnikov and his co-authors \cite{post05,post08}.
The following is a minor modification of \cite[Definition~3.1]{post08}.

\begin{defin}[generalized permutohedron]\label{def.Gperm}\rm ~\\
Let $\{v_{\pi}\}_{\pi\in\Upsilon}$ be a collection of vectors in ${\dv R}^{N}$ parameterized
by enumerations (of $N$) such that for every $\pi\in\Upsilon$ and for every adjacent transposition
$\sigma :\ell\leftrightarrow \ell +1$, where $1\leq\ell <n$, a non-negative constant $k_{\pi,\ell}\geq 0$ exists such that
\begin{equation}
v_{\pi}-v_{\pi\compo\sigma} = k_{\pi,\ell}\cdot (\inc_{\pi (\ell )}- \inc_{\pi (\ell +1)})\,,
\label{eq.gener-permut}
\end{equation}
where $\pi\compo\sigma$ denotes the composition of $\pi$ with $\sigma$ and $\inc_{i}\in {\dv R}^{N}$ is the zero-one
identifier of a~variable $i\in N$ (see \S\,\ref{ssec.notation}).  The respective {\em generalized permutohedron\/} is then the convex hull of that collection of vectors:
$$
G(\{v_{\pi}\}_{\pi\in\Upsilon}) := \conv (\{ v_{\pi}\in {\dv R}^{N} \, :\ \pi\in\Upsilon\})\,.
$$
\end{defin}

\begin{example}\label{exa.permutohedron}
An example of a generalized permutohedron is the ``classic" permutohedron determined,
for example, by a strictly decreasing sequence of reals $r_{1}>r_{2}>\ldots >r_{n}$
as the convex hull $Q_{0}:=P(r_{1},r_{2},\ldots ,r_{n})$ of the collection of vectors
$$
v_{\pi} := [r_{\pi_{-1}(i)}]_{i\in N} \qquad \mbox{for $\pi\in\Upsilon$}\,.
$$
Indeed, if $\sigma :\ell\leftrightarrow \ell +1$, $1\leq\ell <n$ is an
adjacent transposition and $\pi \colon \{ 1,\ldots ,n\} \to N$ an enumeration
with $\pi (\ell )=a$, $\pi (\ell +1)=b$ then
$\pi_{-1} (i)=(\pi\compo\sigma )_{-1}(i)$ for $i\in N\setminus \{a,b\}$ and
\begin{eqnarray*}
v_{\pi} - v_{\pi\compo\sigma} &=&
r_{\pi_{-1}(a)}\cdot\inc_{a} + r_{\pi_{-1}(b)}\cdot\inc_{b} -
r_{(\pi\compo\sigma)_{-1}(a)}\cdot\inc_{a} - r_{(\pi\compo\sigma)_{-1}(b)}\cdot\inc_{b}\\
&=& r_{\ell}\cdot\inc_{a} + r_{\ell +1}\cdot\inc_{b} -
r_{\ell +1}\cdot\inc_{a} - r_{\ell}\cdot\inc_{b} = (r_{\ell}-r_{\ell +1})\cdot (\inc_{a}-\inc_{b})\\
&=& \underbrace{(r_{\ell}-r_{\ell+1})}_{>0}\cdot (\inc_{\pi (\ell )}-\inc_{\pi (\ell +1)})\,,
\end{eqnarray*}
which means the constant $k_{\pi,\ell}\equiv r_{\ell}-r_{\ell +1}$ in \eqref{eq.gener-permut} is strictly positive in this case.\\
{\em Side-note:} We believe
there is a misprint in \cite[\S\,3.1]{post08} in the motivational text preceding their
Definition 3.1. Specifically, we think the authors intended $a_{1}>a_{2}>\ldots >a_{n}$ instead of $a_{1}<a_{2}<\ldots<a_{n}$
in \cite[p.\,215 below]{post08}. Indeed, that ``decreasing"
convention, which was implicitly used in the original manuscript \cite{post05}, leads to \eqref{eq.gener-permut} while the opposite ``increasing"
convention leads to $k_{\pi,\ell}\leq 0$ in \eqref{eq.gener-permut}, respectively to \eqref{eq.GP-reform} below.
\end{example}

The concepts of a generalized permutohedron and that of a core of a supermodular game
basically coincide. The relation is evident through the concept of the Weber set (see Definition \ref{def.weber-set}).

\begin{lem}\label{lem.gener-permut}\rm
A polytope $P\subseteq {\dv R}^{N}$ is a generalized permutohedron iff there exists
a supermodular game $m$ over $N$ such that $P=\web (m)$.
\end{lem}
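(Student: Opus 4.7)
The plan is to prove both implications by pairing the vertex labels $\{v_{\pi}\}_{\pi\in\Upsilon}$ of Definition \ref{def.Gperm} with the rows of the payoff array \eqref{eq.payoff-array} via the \emph{reverse enumeration} $\pi^{R}$ defined by $\pi^{R}(k):=\pi(n+1-k)$. Since $\pi\mapsto\pi^{R}$ is a bijection on $\Upsilon$, this re-indexing does not alter the convex hull, but it brings the sign convention in \eqref{eq.gener-permut} in line with \eqref{eq.payoff-array}; with the na{\"\i}ve identification $v_{\pi}:=x^{m}(\pi,\ast)$ the constants in the edge condition would come out with the opposite sign, a phenomenon already flagged in the side-note of Example \ref{exa.permutohedron}.

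For the forward direction I would take $m\in\supermo$ and set $v_{\pi}:=x^{m}(\pi^{R},\ast)$, whence $\web (m)=\conv (\{v_{\pi}\}_{\pi\in\Upsilon})$. For a fixed $\pi$ and adjacent transposition $\sigma :\ell\leftrightarrow\ell+1$, write $a=\pi(\ell)$, $b=\pi(\ell+1)$, $U=\{\pi(\ell+2),\ldots,\pi(n)\}$. The chains ${\cal C}_{\pi^{R}}$ and ${\cal C}_{(\pi\compo\sigma)^{R}}$ coincide except that $U\cup\{a\}$ and $U\cup\{b\}$ are interchanged, so a direct expansion of \eqref{eq.payoff-array} shows $v_{\pi}(i)=v_{\pi\compo\sigma}(i)$ for $i\notin\{a,b\}$ and $v_{\pi}(a)-v_{\pi\compo\sigma}(a)=-[v_{\pi}(b)-v_{\pi\compo\sigma}(b)]=\Delta$, where $\Delta=m(U\cup\{a,b\})+m(U)-m(U\cup\{a\})-m(U\cup\{b\})$. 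Supermodularity \eqref{eq.supermodul} applied to $A=U\cup\{a\}$ and $B=U\cup\{b\}$ then yields $\Delta\geq 0$, so \eqref{eq.gener-permut} holds with $k_{\pi,\ell}:=\Delta$.

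For the converse I would recover $m$ from a family $\{v_{\pi}\}$ satisfying \eqref{eq.gener-permut} by the rule $m(S):=\sum_{i\in S}v_{\pi}(i)$, where $\pi\in\Upsilon$ is any enumeration whose last $|S|$ entries form $S$. The critical step is well-definedness: two such enumerations differ by a sequence of adjacent transpositions $\sigma :\ell\leftrightarrow\ell+1$ that do not cross the boundary between positions $n-|S|$ and $n-|S|+1$, so $\pi(\ell)$ and $\pi(\ell+1)$ lie jointly inside $S$ or jointly in $N\setminus S$; the edge condition then gives $\sum_{i\in S}[v_{\pi}(i)-v_{\pi\compo\sigma}(i)]=k_{\pi,\ell}\cdot[\incS_{S}(\pi(\ell))-\incS_{S}(\pi(\ell+1))]=0$ at each step. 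Clearly $m(\emptyset)=0$, and telescoping along the chain ${\cal C}_{\pi^{R}}$ identifies $x^{m}(\pi^{R},\ast)$ with $v_{\pi}$, so $\web (m)=P$. Supermodularity of $m$ is then verified in its equivalent two-element form (Theorem \ref{thm:super} of \ref{sec.apex-supermod}): for $S\subseteq N$ and distinct $a,b\in N\setminus S$, picking $\pi$ with $\pi(n-|S|-1)=a$, $\pi(n-|S|)=b$ and last $|S|$ entries equal to $S$, the forward-direction computation identifies the supermodular defect $m(S\cup\{a,b\})+m(S)-m(S\cup\{a\})-m(S\cup\{b\})$ with $v_{\pi}(a)-v_{\pi\compo\sigma}(a)=k_{\pi,n-|S|-1}\geq 0$.

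The main obstacle is thus the sign-and-reversal bookkeeping surrounding $\pi\mapsto\pi^{R}$, which is forced by the convention in \eqref{eq.gener-permut}; once the reversal is in place, the forward implication collapses to a single application of \eqref{eq.supermodul} and the converse reduces to the well-definedness of $m$, which in turn is guaranteed precisely by the non-negativity of the constants $k_{\pi,\ell}$.
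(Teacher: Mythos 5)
Your proof is correct and follows essentially the same route as the paper: your reversal $\pi\mapsto\pi^{R}$ is exactly the paper's first observation (re-indexing the family via the inverting permutation to pass from \eqref{eq.gener-permut} to the prefix-form \eqref{eq.GP-reform}), your well-definedness argument for $m$ is the paper's consistency condition \eqref{eq.cons.Gperm}, and both directions reduce supermodularity to the two-element increment $\Delta m(a,b|Z)=k_{\pi,\ell}\geq 0$ just as in the paper. No gaps.
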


\begin{proof}
The first observation is that a generalized permutohedron can be equivalently introduced
as the convex hull \mbox{$\conv (\{ x_{\tau}\in {\dv R}^{N} \, :\ \tau\in\Upsilon\})$} of a
set of vectors $\{ x_{\tau}\}_{\tau\in\Upsilon}$ in ${\dv R}^{N}$ such that
\begin{equation}
\forall\,\tau\in\Upsilon \quad
\forall\,\varsigma : l\leftrightarrow l +1,\, 1\leq l <n\quad
\exists\, K_{\tau,l}\geq 0
 \qquad x_{\tau}-x_{\tau\compo\varsigma} = K_{\tau,l}\cdot (\inc_{\tau (l +1)}- \inc_{\tau (l)})\,,
\label{eq.GP-reform}
\end{equation}
which is, technically, the formula \eqref{eq.gener-permut} in which the right-hand side is multiplied by $(-1)$.
This paradox has an easy explanation: the vectors $v_{\pi}$, $\pi\in\Upsilon$ can be re-indexed by inverse enumerations instead, one can put, for any $\tau\in\Upsilon$,
\begin{eqnarray*}
x_{\tau} := v_{\pi} &\quad&  \mbox{where $\pi= \tau\compo\rho$ and $\rho$ is the ``inverting" permutation on $\{1,2,\ldots ,n\}$}\\
&&\mbox{ given by~~ $\rho(k):= n+1-k$ ~~for any $k\in\{1,2,\ldots ,n\}$.}
\end{eqnarray*}
Of course, the convex hull is the same after the re-indexing, but \eqref{eq.gener-permut} turns into \eqref{eq.GP-reform}.

The second observation is that the condition \eqref{eq.GP-reform} implies the following
{\em consistency condition\/}, analogous to the condition \eqref{eq.consistency}, namely
\begin{equation}
\forall\, \tau ,\pi\in\Upsilon \quad \forall\, S\in {\cal C}_{\tau}\cap {\cal C}_{\pi}\qquad
\sum_{i\in S}\ x_{\tau}(i) = \sum_{i\in S}\ x_{\pi}(i)\,.
\label{eq.cons.Gperm}
\end{equation}
Indeed, whenever $S\subseteq N$ is fixed and $\tau,\pi\in\Upsilon$ are such that
$S\in {\cal C}_{\tau}\cap {\cal C}_{\pi}$ one has
$\bigcup_{k\leq s} \{\tau (k)\}=S=\bigcup_{k\leq s} \{\pi (k)\}$ where $s=|S|$.
Hence, there exists a sequence of adjacent transpositions $\varsigma : l\leftrightarrow l +1,\, 1\leq l <n$ satisfying either $l+1\leq s$ or $s<l$ and transforming $\tau$ successively into $\pi$. For every such transposition $\varsigma$ and any $\upsilon\in\Upsilon$
in the transformation sequence from $\tau$ to $\pi$ one has
$\sum_{i\in S}\ x_{\upsilon}(i) = \sum_{i\in S}\ x_{\upsilon\compo\varsigma}(i)$
by \eqref{eq.GP-reform}, which allows one to derive \eqref{eq.cons.Gperm}.

Thus, the relation \eqref{eq.cons.Gperm} makes it possible to define correctly
a game $m$ over $N$ by
\begin{equation}
m(S) := \sum_{i\in S}\, x_{\tau}(i)\qquad
\mbox{whenever $S\in {\cal C}_{\tau}$~ for some $\tau\in\Upsilon$}.
\label{eq.inver.Gperm}
\end{equation}
The third observation is that the condition \eqref{eq.GP-reform} even implies that $m$
given by \eqref{eq.inver.Gperm} is supermodular. It is enough to show (see \ref{sec.apex-CI}),
for any $Z\subseteq N$ and distinct $a,b\in N\setminus Z$ that
$$
\Delta m(a,b|Z) ~:=~ m(\{a,b\}\cup Z)+ m(Z) -m(\{a\}\cup Z)-m(\{b\}\cup Z)\geq 0\,.
$$
Indeed, given such a set $Z$ with $s=|Z|$ find $\tau\in\Upsilon$ such that $\bigcup_{k\leq s} \{\tau(k)\}=Z$,
$\tau (s+1)=a$ and $\tau (s+2)=b$. Then consider an adjacent transposition $\varsigma :  s+1\leftrightarrow s +2$
and observe that $Z,\{a\}\cup Z,\{a,b\}\cup Z\in {\cal C}_{\tau}$ while $Z,\{b\}\cup Z,\{a,b\}\cup Z\in {\cal C}_{\tau\compo\varsigma}$.
The condition \eqref{eq.GP-reform} gives
$x_{\tau}-x_{\tau\compo\varsigma} = K_{\tau,s+1}\cdot (\inc_{\tau (s+2)}- \inc_{\tau (s+1)})= K_{\tau,s+1}\cdot (\inc_{b}- \inc_{a})$.
In other words,
$$
x_{\tau\compo\varsigma}(a)=x_{\tau}(a)+K_{\tau,s+1},\enskip x_{\tau\compo\varsigma}(b)=x_{\tau}(b)-K_{\tau,s+1}\enskip
\mbox{and $x_{\tau\compo\varsigma}(i)=x_{\tau}(i)$~ for $i\in N\setminus \{a,b\}$}.
$$
Hence, by \eqref{eq.inver.Gperm}, $m(Z)=\sum_{i\in Z} x_{\tau}(i)$,
$m(\{a\}\cup Z)=\sum_{i\in Z} x_{\tau}(i)+x_{\tau}(a)=m(Z)+x_{\tau}(a)$,
$m(\{b\}\cup Z)=\sum_{i\in Z} x_{\tau\compo\varsigma}(i)+x_{\tau\compo\varsigma}(b)=
m(Z)+x_{\tau\compo\varsigma}(b)=m(Z)+x_{\tau}(b)-K_{\tau,s+1}$
and, finally,
$m(\{a,b\}\cup Z)=\sum_{i\in Z} x_{\tau}(i)+x_{\tau}(a)+x_{\tau}(b)=m(Z)+x_{\tau}(a)+x_{\tau}(b)$. This gives
$$
m(\{a,b\}\cup Z)+ m(Z) -m(\{a\}\cup Z)-m(\{b\}\cup Z)= K_{\tau,s+1}\geq 0\,,
$$
which was desired. The comparison of \eqref{eq.inver.Gperm} and \eqref{eq.chain} gives
$x^{m}(\tau ,i)=x_{\tau}(i)$ for $i\in N$, $\tau\in\Upsilon$, which concludes the proof that
every generalized permutohedron is the Weber set for some supermodular game.

The converse implication saying that the Weber set $\web (m)$ for a supermodular game
$m$ over $N$ is a~generalized permutohedron is easier. It is enough to show that the
row-vectors of the $\Upsilon\times N$-array $x:= x^{m}$ given by \eqref{eq.payoff-array}
satisfy \eqref{eq.GP-reform}.
This can be verified by an inverse consideration: one can show that, for given $\tau\in\Upsilon$ and $\varsigma : l\leftrightarrow l +1,\, 1\leq l <n$
the condition \eqref{eq.GP-reform} holds with $K_{\tau,l}=m(\{a,b\}\cup Z)+ m(Z) -m(\{a\}\cup Z)-m(\{b\}\cup Z)$
where $Z=\bigcup_{k<l} \{\tau (k)\}$, $a=\tau (l)$ and $b=\tau (l+1)$. This concludes the proof.
\end{proof}

\begin{remark}\label{rem.Postnikov}\rm
Note that it follows from the above arguments that a polytope $P\subseteq {\dv R}^{N}$ is a
generalized permutohedron iff its $(-1)$-multiple is a generalized permutohedron. Indeed, if
$P=G(\{ v_{\pi}\}_{\pi\in\Upsilon})$ then $-P=G(\{ -v_{\pi}\}_{\pi\in\Upsilon})$ and
the vectors $v_{\pi}$, $\pi\in\Upsilon$ satisfy \eqref{eq.gener-permut} iff the vectors
$x_{\pi} := -v_{\pi}$, $\pi\in\Upsilon$ satisfy \eqref{eq.GP-reform}, with the same constant.
The first observation in the proof of Lemma \ref{lem.gener-permut} then implies what is claimed.

In particular, another equivalent formulation of Lemma \ref{lem.gener-permut} is that
a polytope $P\subseteq {\dv R}^{N}$ is a generalized permutohedron iff there exists
a {\em submodular\/} game $m$ over $N$ such that $P=\web (m)$. This is because our payoff-array transformation is
linear: therefore, $\web (-m)=(-1)\cdot \web (m)$ and we know $m$ is supermodular iff $-m$ is submodular.
\end{remark}

The consequence of Lemma \ref{lem.gener-permut} and Lemma \ref{lem.1} (the second statement) is as follows.

\begin{coro}\label{cor.GP-supermod-core}\rm
A polytope $P\subseteq {\dv R}^{N}$ is a generalized permutohedron iff
it is the core of a~{\em supermodular\/} game $m$ over $N$,
that is, iff $\exists\, m\in\supermo$ such that
$$
P=\cor (m)\equiv \left\{\, [v_{i}]_{i\in N}\in {\dv R}^{N}\, : ~ \sum_{i\in N} v_{i}= m(N) \,~\&~\,
\forall\, S\subseteq N ~\sum_{i\in S} v_{i}\geq m(S)\,\right\}\,.
$$
\end{coro}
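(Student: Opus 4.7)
The plan is to derive this corollary as an immediate consequence of the two preceding results, Lemma \ref{lem.gener-permut} and the second statement of Lemma \ref{lem.1}. These two lemmas together already establish the three-way link between generalized permutohedra, Weber sets of supermodular games, and cores of supermodular games; the corollary just records the end-to-end equivalence.

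For the forward direction, I would start with a polytope $P\subseteq {\dv R}^{N}$ that is a generalized permutohedron. By Lemma \ref{lem.gener-permut} there exists some supermodular game $m\in\supermo$ with $P=\web (m)$. Since $m$ is supermodular, the second clause of Lemma \ref{lem.1} applies and yields $\web (m)=\cor (m)$, so $P=\cor (m)$ as required.

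For the reverse direction, I would assume $P=\cor (m)$ for some $m\in\supermo$. The same second clause of Lemma \ref{lem.1} gives $\cor (m)=\web (m)$, so $P=\web (m)$, and Lemma \ref{lem.gener-permut} then identifies $P$ as a generalized permutohedron. No new calculation is needed; the only subtlety worth writing down is that the definition of $\cor (m)$ expanded in the statement of the corollary matches \eqref{eq.def.core}, so the equivalence is literally the concatenation of the two bi-implications.

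There is really no hard step here, since all the work has been done in Lemma \ref{lem.gener-permut} (the nontrivial translation between the Postnikov-style edge-direction condition \eqref{eq.gener-permut} and supermodularity of the induced game via \eqref{eq.inver.Gperm}) and in Lemma \ref{lem.1} (the identification of core and Weber set for supermodular games, via the min-representation \eqref{eq.min-weber}). The only thing to be careful about in writing the proof is that the existential quantifier on $m$ in the corollary matches the one in Lemma \ref{lem.gener-permut}, and that no standardization of $m$ is being tacitly assumed, since Corollary \ref{cor.GP-supermod-core} quantifies over all of $\supermo$ rather than over $\supmogame$.
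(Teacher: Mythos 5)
Your proposal is correct and is exactly the paper's own argument: the paper derives Corollary \ref{cor.GP-supermod-core} as an immediate consequence of Lemma \ref{lem.gener-permut} together with the second statement of Lemma \ref{lem.1}, chaining the two bi-implications in both directions just as you do.
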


Thus, the class of generalized permutohedra coincides with the class of cores of ``convex" (= supermodular) games.
The dual formulation of Corollary \ref{cor.GP-supermod-core} is that $P$ is a~generalized permutohedron iff there exists a submodular game
$r\in {\dv R}^{\caP}$, $r(\emptyset )=0$ with
$$
P=\left\{\, [v_{i}]_{i\in N}\in {\dv R}^{N}\, : ~ \sum_{i\in N} v_{i}= r(N) \,~\&~\,
\forall\, S\subseteq N ~\sum_{i\in S} v_{i}\leq r(S)\,\right\}\,.
$$
Indeed, the relation between the lower bounds from Corollary \ref{cor.GP-supermod-core} and
the upper bounds in the above formula is as follows: $r(S)+m(N\setminus S)=m(N)= r(N)$ for any $S\subseteq N$. Note that this is one of possible correspondences between supermodular and submodular games, see the discussion in \S\,\ref{ssec.matroid}, the relations \eqref{eq.r-to-m} and \eqref{eq.m-to-dual-r} on page~\pageref{eq.r-to-m}.

\begin{remark}\label{rem.Doker}\rm
Note that the fact that every generalized permutohedron has the form \eqref{eq.def.core}
has also been mentioned in recent literature on generalized permutohedra.
Nevertheless, we feel that the formulation of this fact in \cite[\S\,2]{ABD11} and
\cite[\S\,2.2]{dok11} is somehow ambiguous and needs clarification or a
warning of possible misinterpretation. Since the formal definition of the concept
of a generalized permutohedron is omitted therein,\footnote{An informal vague sentence with a reference
to \cite{post08} is only written there instead.}
the reader of \cite{ABD11,dok11}, not being aware of the precise definition, easily gets the impression that
generalized permutohedra are ``just" defined by \eqref{eq.def.core}, where the lower bounds $m(S)$, $S\subseteq N$
are required to be tight. Then subsequent Theorem 2.1 in \cite{ABD11}, respectively Theorem 2.2.1 in
\cite{dok11}, may be misinterpreted as the claim that the lower bounds in \eqref{eq.def.core}
are tight iff they define a supermodular game. This is not true as Example \ref{exa.Doker} showed;
the polytope there {\em is not a generalized permutohedron}, despite it is the core
of an exact game.
\end{remark}

To explain the relation of our result from \S\,\ref{ssec.main-formul} to the theory of
generalized permutohedra let us mention their equivalent characterization in terms of Minkowski sum.
It is based on the following concept recalled in \cite[\S\,1.1]{mort07}.

\begin{definition}\label{def.Mink-summand}\rm
We say that a polytope $P\subseteq {\dv R}^{N}$ is a {\em Minkowski summand\/}
of a polytope $Q\subseteq {\dv R}^{N}$ if there exists $\lambda >0$ and a polytope
$R\subseteq {\dv R}^{N}$ such that $\lambda\cdot Q=P\oplus R$.
\end{definition}

The following characterization of generalized permutohedra has been given in Proposition 3.2 of \cite{post08};
see also related Theorem 2.4.3 in \cite{mort07}.

\begin{lem}\label{lem.GP-Mink-sum}\rm
A polytope $P\subseteq {\dv R}^{N}$ is a generalized permutohedron iff it is
a Minkowski summand of the permutohedron.
\end{lem}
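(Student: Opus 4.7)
My plan is to use the bridge established in Corollary~\ref{cor.GP-supermod-core} between generalized permutohedra and cores of supermodular games, combined with a Minkowski-sum identity on that side. Specifically, I would first establish the auxiliary identity
$$
\cor (m_{1}+m_{2}) = \cor (m_{1})\oplus \cor (m_{2})\qquad \mbox{for any $m_{1},m_{2}\in\supermo$}\,.
$$
The inclusion ``$\supseteq$'' is immediate: if $v_{1}\in\cor (m_{1})$ and $v_{2}\in\cor (m_{2})$, then $v_{1}+v_{2}$ satisfies the defining equalities/inequalities of $\cor (m_{1}+m_{2})$ by addition. For ``$\subseteq$'', I would invoke Lemma~\ref{lem.1}: the vertices of $\cor (m_{1}+m_{2})=\web (m_{1}+m_{2})$ are rows of the array $x^{m_{1}+m_{2}}$, and by linearity of the payoff-array transformation from \S\,\ref{ssec.payoff-trans}, $x^{m_{1}+m_{2}}(\tau,\ast)=x^{m_{1}}(\tau,\ast)+x^{m_{2}}(\tau,\ast)\in \cor (m_{1})\oplus \cor (m_{2})$; since the right-hand side is convex, it contains the convex hull of those vertices.

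For the forward implication, suppose $P\subseteq {\dv R}^{N}$ is a generalized permutohedron. Corollary~\ref{cor.GP-supermod-core} gives $m\in\supermo$ with $P=\cor (m)$. The reference permutohedron $Q_{0}=P(r_{1},\ldots,r_{n})$ of Example~\ref{exa.permutohedron} equals $\cor (m_{0})$ for a \emph{strictly} supermodular game $m_{0}$, in the sense that $\Delta m_{0}(a,b|Z)>0$ for every triple; this is clear from the last-paragraph computation in the proof of Lemma~\ref{lem.gener-permut}, where the constant is $K_{\tau,s+1}=r_{\ell}-r_{\ell+1}>0$. Since only finitely many triples exist, one can pick $\lambda>0$ so large that $\lambda\cdot \Delta m_{0}(a,b|Z)\geq \Delta m(a,b|Z)$ for all such triples, guaranteeing that $\lambda m_{0}-m$ is supermodular. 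Setting $R:=\cor (\lambda m_{0}-m)$ and applying the identity above to $m+(\lambda m_{0}-m)=\lambda m_{0}$ yields $\lambda\cdot Q_{0}=\cor (\lambda m_{0})=P\oplus R$, exhibiting $P$ as a Minkowski summand of $Q_{0}$.

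For the converse, suppose $\lambda Q_{0}=P\oplus R$. For each $\pi\in\Upsilon$, pick a linear functional $\ell_{\pi}\in {\dv R}^{N}$ in the interior of the braid-fan chamber $\{\ell : \ell (\pi (1))>\cdots >\ell (\pi (n))\}$ and let $p_{\pi}$ denote the vertex of $P$ maximising $\ell_{\pi}$ on $P$. I would verify the defining condition \eqref{eq.gener-permut} as follows: for an adjacent transposition $\sigma :\ell\leftrightarrow \ell +1$, the chambers of $\pi$ and $\pi\circ\sigma$ share the wall $\{\ell (\pi (\ell))=\ell (\pi (\ell+1))\}$. Any functional on that wall is maximised at both $p_{\pi}$ and $p_{\pi\circ\sigma}$, forcing $p_{\pi}-p_{\pi\circ\sigma}$ to be orthogonal to the wall, hence a scalar multiple of $\inc_{\pi (\ell)}-\inc_{\pi (\ell+1)}$. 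Evaluating against $\ell_{\pi}$ and using $\ell_{\pi}(\pi (\ell))>\ell_{\pi}(\pi (\ell+1))$ together with $\ell_{\pi}\cdot p_{\pi}\geq \ell_{\pi}\cdot p_{\pi\circ\sigma}$ pins the scalar to be $\geq 0$. Every vertex of $P$ arises as some $p_{\pi}$ because the normal fan of a Minkowski summand of $\lambda Q_{0}$ is coarsened by the braid fan, so $P=\conv \{p_{\pi}:\pi\in\Upsilon\}$ and $P$ is a generalized permutohedron. The main obstacle I anticipate is this backward implication: the normal-fan coarsening property of Minkowski summands and the accompanying sign bookkeeping for the constants $k_{\pi,\ell}$ are standard convex-geometric facts, but they must be pieced together carefully within the paper's chamber-indexing conventions from Definition~\ref{def.Gperm}.
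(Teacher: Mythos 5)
Your proof is correct, and it splits naturally into a half that mirrors the paper and a half that genuinely departs from it. The forward direction is essentially the paper's argument: the paper takes the specific game $\overline{m}(S)=\frac{1}{2}|S|(|S|-1)$ with $\Delta\overline{m}(a,b|Z)\equiv 1$, finds $\lambda$ with $\lambda\overline{m}-m^{\star}\in\supmogame$, and writes $\lambda Q_{0}=\web(m)\oplus\web(r)$; you do the same with the strictly supermodular game behind $P(r_{1},\ldots,r_{n})$, and you additionally supply a proof of the Minkowski additivity $\cor(m_{1}+m_{2})=\cor(m_{1})\oplus\cor(m_{2})$ via linearity of the payoff-array transformation and Lemma~\ref{lem.1} — a step the paper uses silently. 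The real divergence is the converse: the paper simply defers it to Proposition 3.2 and Theorem 15.3 of \cite{post08}, whereas you inline a direct normal-fan argument (vertices $p_{\pi}$ selected by braid chambers, differences forced orthogonal to shared walls, signs pinned by evaluating $\ell_{\pi}$). This buys self-containedness at the cost of importing the standard fact that the normal fan of $P\oplus R$ is the common refinement of the normal fans of $P$ and $R$ — which is exactly the content the paper outsources, so the two routes are comparable in what they assume. Two small points to tighten: ``any functional on that wall is maximised at both $p_{\pi}$ and $p_{\pi\circ\sigma}$'' should be ``any functional in the shared facet of the two closed chambers,'' and you then note that this facet linearly spans the wall hyperplane, so orthogonality of $p_{\pi}-p_{\pi\circ\sigma}$ to the whole hyperplane follows; and the phrase ``the normal fan of a Minkowski summand of $\lambda Q_{0}$ is coarsened by the braid fan'' has the refinement direction backwards — the braid fan \emph{refines} the normal fan of the summand, which is what guarantees both that $p_{\pi}$ is well defined and that every vertex of $P$ arises as some $p_{\pi}$. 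Neither issue affects the validity of the argument.
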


\begin{proof}
We can easily show that every generalized permutohedron is a summand of the permutohedron.
To this end we define a special standardized game $\overline{m}$ by
$$
\overline{m}(S)= \frac{1}{2}\cdot |S|\cdot (|S|-1)\qquad
\text{for any $S\subseteq N$.}
$$
Observe that its Weber set $Q_{0}:= \web (\overline{m})$ coincides with the permutohedron
$P(r_{1},\ldots ,,r_{n})$, where $r_{k}:= n-k$ for $k= 1,\ldots ,n$ (see Example \ref{exa.permutohedron}).
Note that, for any $Z\subseteq N$ and distinct $a,b\in N\setminus Z$, one has
$$
\Delta \overline{m}(a,b|Z) ~=~\overline{m}(\{ a,b\}\cup Z)+\overline{m}(Z)-\overline{m}(\{ a\}\cup Z)-\overline{m}(\{ b\}\cup Z)=1\,,
$$
which allows one to observe (see \ref{sec.apex-CI}) that $\overline{m}\in\supmogame$ and,
for any $\widetilde{m}\in\supmogame$, there exists $\lambda>0$ such that
$\lambda\cdot\overline{m}-\widetilde{m}\in\supmogame$.

Thus, given a generalized permutohedron $P\subseteq {\dv R}^{N}$, by Lemma \ref{lem.gener-permut}
we find a supermodular game $m$ such that $P=\web (m)$ and define a standardized version
$m^{\star}$ of $m$ by \eqref{eq.standard}.
Find $\lambda>0$ with $\lambda\cdot\overline{m}-m^{\star}\in\supmogame$ and observe
$$
\lambda\cdot\overline{m} = m+r\quad
\mbox{where $r:=(\lambda\cdot\overline{m}-m^{\star})-\sum_{i\in N} m(\{ i\})\cdot m^{\uparrow i}$~ is a supermodular game.}
$$
Hence, $\lambda\cdot Q_{0}=\web(\lambda\cdot\overline{m}) =\web (m)\oplus \web (r)=P\oplus \web (r)$.

For the inverse implication we refer to Proposition 3.2 and Theorem 15.3 in \cite{post08}. The arguments there
go through another equivalent definition of a generalized permutohedron saying that its normal fan
coarsens the normal fan of the permutohedron; for these concepts see \S\,1.1 of \cite{mort07}.
\end{proof}

Thus, one can introduce a natural pre-order on the class of generalized permutohedra, namely $P\preceq Q$ iff
$P$ is a Minkowski summand of (a generalized permutohedron) $Q$ and the respective equivalence relation
$P\simeq Q$ defined by $P\preceq Q\preceq P$. This leads to the following concept motivated by the general notion
of join-irreducibility from lattice theory; see \cite[\S\,III.3]{bir91}. Moreover, in our context,
this concept also appears to correspond to the notion of an indecomposable polytope; see Remark \ref{rem.indec-polytope}.
This fact motivated our terminology.

\begin{definition}\label{def.ir-Mink-summand}\rm
A generalized permutohedron $P\subseteq {\dv R}^{N}$ has a non-trivial decomposition if
$$
\lambda\cdot P = P_{1}\oplus \ldots \oplus P_{k}\qquad
\mbox{for some $\lambda>0$ and generalized permutohedra $P_{1},\ldots ,P_{k}$,}
$$
none of which is equivalent to $P$. If this is not the case we say that $P$ is {\em indecomposable}.
\end{definition}

One anticipates that any generalized permutohedron can be decomposed into indecomposable ones. Therefore,
a natural question is whether one can geometrically characterize
the indecomposable generalized permutohedra. A trivial observation is that $P\subseteq {\dv R}^{N}$
is indecomposable iff any translation $P\oplus \{ v\}$ for $v\in {\dv R}^{N}$
is indecomposable. Therefore, we are only interested in
{\em standardized\/} polytopes, that is, polytopes  $P\subseteq [0,\infty )^{N}$
such that, for any $i\in N$, an element $v\in P$ exists with $v_{i}=0$. Our result
from \S\,\ref{ssec.main-formul} can be interpreted
as the solution to the problem of characterization of indecomposable generalized permutohedra.

\begin{thm}\label{thm.ired}
A standardized generalized permutohedron $P\subseteq {\dv R}^{N}$ is
indecomposable (in sense of Definition~\ref{def.ir-Mink-summand}) iff the set ${\cal X}$ of its vertices satisfies the condition of Theorem \ref{thm.1},
that is, given $x\in {\dv R}^{\Gamma\times N}$ satisfying \eqref{eq.gamma-array}, every solution
$y\in {\dv R}^{\Gamma\times N}$ to (a)-(b) is a multiple of $x$.
If this is the case, then the only standardized generalized permutohedra equivalent to $P$ are its multiples $\lambda\cdot P$ where $\lambda>0$.

For any non-empty finite set of variables $N$, there exists a finite number of indecomposable types of generalized
permutohedra. Every generalized permutohedron can be written as the Minkowski sum of indecomposable ones.
\end{thm}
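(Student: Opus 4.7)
The plan is to set up a bijection between standardized generalized permutohedra in $\dv{R}^{N}$ and games in $\supmogame$ under which Minkowski decomposition on the geometric side matches additive decomposition on the game side, and then to invoke Theorem~\ref{thm.1}.

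First, I would use Corollary~\ref{cor.GP-supermod-core} together with the standardization formula \eqref{eq.standard} to show that every standardized generalized permutohedron $P$ equals $\cor(m)$ for a unique $m\in\supmogame$. The key observation is that $\cor(m^{\star})$ is the translate of $\cor(m)$ by the modular vector $\sum_{i\in N} m(\{i\})\,\inc_{i}$, so requiring each coordinate of the core to attain the value $0$ forces $m(\{i\})=0$ for every $i\in N$, pinning down $m\in\supmogame$ uniquely.

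The technical heart of the argument is the identity
\[
\cor(m_{1})\oplus\cor(m_{2}) = \cor(m_{1}+m_{2}) \qquad \text{for } m_{1},m_{2}\in\supermo,
\]
together with $\cor(\lambda m)=\lambda\cdot\cor(m)$ for $\lambda>0$. I would deduce these from the Weber-set description (Lemma~\ref{lem.1}) and the linearity of the payoff-array transformation \eqref{eq.payoff-array}, which gives $x^{m_{1}+m_{2}}(\pi,\ast)=x^{m_{1}}(\pi,\ast)+x^{m_{2}}(\pi,\ast)$ enumeration by enumeration; since the cores are generalized permutohedra, their normal fans refine the braid arrangement, and in each maximal chamber the Minkowski-sum vertex is the sum of the corresponding marginal vectors. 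Via the bijection of the previous paragraph, a non-trivial decomposition $\lambda P=P_{1}\oplus\cdots\oplus P_{k}$ translates (after standardizing each $P_{i}$ and absorbing the resulting modular shifts so that they cancel globally) into an additive decomposition $\lambda m = m_{1}+\cdots+m_{k}$ in $\supmogame$, and conversely. Under this dictionary the equivalence $P_{i}\simeq P$ becomes proportionality $m_{i}\in\dv{R}_{>0}\cdot m$, so $P$ is indecomposable in the sense of Definition~\ref{def.ir-Mink-summand} if and only if $m$ generates an extreme ray of $\supmogame$. Theorem~\ref{thm.1} then gives the stated combinatorial criterion in terms of the system (a)-(b).

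For the uniqueness clause, if $P$ is indecomposable and $P'$ is standardized with $P'\simeq P$, writing $\mu P'=P\oplus R$ for some $\mu>0$ and transferring via the bijection yields $\mu m'=m+r$ in $\supmogame$; extremality of $m$ forces $r=\gamma m$, hence $P'=\tfrac{1+\gamma}{\mu}P$. Finally, finiteness of the indecomposable types and the Minkowski-sum decomposition follow because $\supmogame$ is a pointed polyhedral cone with finitely many extreme rays, so every $m\in\supmogame$ is a finite non-negative combination of extreme-ray generators; translating a non-standardized generalized permutohedron by its standardization shift reduces to the standardized case, the shift being contributed as a (trivially indecomposable) singleton summand. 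I expect the main obstacle to be the careful bookkeeping of the modular (translation) freedom when transferring a Minkowski decomposition of $P$ back to an additive decomposition inside $\supmogame$, together with clean justification of the Minkowski-additive core identity (which, although classical for base polyhedra of submodular functions, needs to be phrased correctly in our supermodular setting).
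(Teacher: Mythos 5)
Your overall route --- identify standardized generalized permutohedra with games in $\supmogame$ via the core, check that this correspondence turns Minkowski sums into sums of games, reduce indecomposability to extremality in the cone, and invoke Theorem~\ref{thm.1} --- is exactly the paper's. The bijection, the identity $\cor(m_{1})\oplus\cor(m_{2})=\cor(m_{1}+m_{2})$, the finiteness claims, and the direction ``$m$ extreme $\Rightarrow$ $P$ indecomposable'' are sound as you sketch them.

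The genuine gap is the sentence ``the equivalence $P_{i}\simeq P$ becomes proportionality $m_{i}\in{\dv R}_{>0}\cdot m$.'' This is false in general. The relation $\simeq$ is mutual Minkowski-summandship, and on the game side $P\preceq Q$ translates to $\lambda\cdot m^{Q}=m^{P}+r$ for some $r\in\supmogame$, i.e.\ to $m^{P}$ lying in the smallest face $F(m^{Q})$ of $\supmogame$ containing $m^{Q}$; hence $P\simeq Q$ iff $F(m^{P})=F(m^{Q})$, not iff the games are proportional. For instance, if $e_{1},e_{2}$ generate distinct extreme rays spanning a two-dimensional face, then $e_{1}+e_{2}$ and $2e_{1}+e_{2}$ share the same minimal face and so have equivalent but non-homothetic cores. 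Your identification holds only when $F(m)$ is a ray, which is precisely the situation you are trying to characterize, so invoking it begs the question in the direction ``$m$ not extreme $\Rightarrow$ $P$ decomposable'': a witness $m=m_{1}+m_{2}$ with $m_{1},m_{2}$ merely non-proportional to $m$ may still have a summand equivalent to $P$ (e.g.\ $m=(e_{1}+\tfrac{1}{2}e_{2})+\tfrac{1}{2}e_{2}$ above), hence need not be a non-trivial decomposition in the sense of Definition~\ref{def.ir-Mink-summand}. To close the gap you must produce summands lying in proper subfaces of $F(m)$ --- e.g.\ decompose $\lambda m$ into generators of the extreme rays of $F(m)$ --- and use the face characterization of $\simeq$ to see that none of them is equivalent to $P$. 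A secondary omission: before transferring $\mu P'=P\oplus R$ to $\mu m'=m+r$ with $r\in\supmogame$ you need to know that the complementary summand $R$ of a generalized permutohedron is again a generalized permutohedron; this is not part of Definition~\ref{def.Mink-summand}, and the paper establishes it as a separate first step using Lemma~\ref{lem.GP-Mink-sum} twice.
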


\begin{proof}
First, observe that a generalized permutohedron $P\subseteq {\dv R}^{N}$ is a summand of a~generalized permutohedron $Q\subseteq {\dv R}^{N}$ iff
there exists another generalized permutohedron $R\subseteq {\dv R}^{N}$ such that $\lambda\cdot Q=P\oplus R$ for some $\lambda>0$.
The sufficiency of this condition is evident. For necessity write $\lambda\cdot Q=P\oplus R$, where $\lambda >0$ and
$R\subseteq {\dv R}^{N}$ is a polytope. To show that $R$ is a generalized permutohedron Lemma \ref{lem.GP-Mink-sum}
can be used. This lemma, applied to $Q$, says there exists $\gamma >0$ and a polytope $R^{\prime}\subseteq {\dv R}^{N}$
such that $\gamma\cdot Q_{0}= Q\oplus R^{\prime}$, where $Q_{0}$ is the usual permutohedron (see Example \ref{exa.permutohedron}). Hence,
$$
(\lambda\cdot\gamma)\cdot Q_{0} = (\lambda\cdot Q)\oplus (\lambda\cdot R^{\prime})=
P\oplus R\oplus (\lambda\cdot R^{\prime})= R\oplus (P\oplus (\lambda\cdot R^{\prime}))\,,
$$
which, again by Lemma \ref{lem.GP-Mink-sum}, this time applied to $R$, says $R$ is a generalized permutohedron.

By Corollary \ref{cor.GP-supermod-core}, the mapping $m\mapsto \cor (m)=\web (m)=P$
considered on the cone of supermodular games is a mapping from $\supermo$ onto the set
of generalized permutohedra. By Lemma \ref{lem.1}, the mapping is one-to-one:
the inverse mapping is given by $m^{P}(S)=\min_{v\in P}\,\sum_{i\in S} v_{i}$ for $S\subseteq N$;
see also Theorem \ref{thm:super}\eqref{i:web-min}.

Since the mapping transforms sums to Minkowski sums and non-negative multiples to
non-negative multiples, any decomposition $\lambda\cdot Q=P\oplus R$, $\lambda >0$, corresponds
to a~decomposition of the respective game $\lambda\cdot m^{Q}=m^{P}+m^{R}$.
Clearly, standardized games corresponds to standardized polytopes.
If a standardized generalized permutohedron has a non-trivial decomposition then it has
a non-trivial decomposition into standardized polytopes.

A decomposition $\lambda\cdot m^{Q}=m^{P}+m^{R}$ into standardized games also implies that
$m^{P},m^{R}$ belong to the smallest face $F(m^{Q})$ of $\supmogame$ containing $m^{Q}$.
Conversely, if $m^{P}\in F(m^{Q})$ then $m^{\prime}\in F(m^{Q})$ exists with
$m^{Q}=\alpha\cdot m^{P}+(1-\alpha )\cdot m^{\prime}$ for some $\alpha\in (0,1)$.
Since there exists a standardized generalized permutohedron $R$ with
$\alpha^{-1}\cdot (1-\alpha )\cdot m^{\prime}=m^{R}$, one has
$\alpha^{-1}\cdot m^{Q}=m^{P}+m^{R}$ and concludes that $P\preceq Q$
iff $m^{P}\in F(m^{Q})$. Therefore, $P\simeq Q$ iff $F(m^{P})=F(m^{Q})$.

In particular, a non-trivial decomposition of $P$ in sense of Definition \ref{def.ir-Mink-summand}
correspond to a non-trivial decomposition of $\lambda\cdot m^{P}$ in the sense that
none of its summands in $\supmogame$ belongs to the relative interior of $F(m^{P})$.
Thus, a standardized generalized permutohedron $P\subseteq {\dv R}^{N}$ is indecomposable iff
$m^{P}$ belongs to an extreme ray of $\supmogame$. The generators of these rays are characterized
by Theorem \ref{thm.1}; the zero function also trivially satisfies the condition from \S\,\ref{ssec.main-formul}
because ${\cal X}$ has just the zero vector in that case. The remaining statements of Theorem \ref{thm.ired} then
follow from the fact that $\supmogame$ is a pointed rational polyhedral cone.
\end{proof}

\begin{remark}\label{rem.indec-polytope}\rm
A non-empty polytope $P\subseteq {\dv R}^{N}$ is called {\em indecomposable\/} if every
Minkowski summand of $P$ has the form $\alpha\cdot P\oplus \{ v\}$, where $\alpha\geq 0$ and
$v\in {\dv R}^{N}$. This notion, treated in the theory of convex polytopes \cite{mey74,GKKZ03},
aims at capturing the concept of extremality in an abstract way.
Specifically, the set of non-empty polytopes in ${\dv R}^{N}$, being equipped with the
Minkowski addition $\oplus$ and the scalar multiplication by non-negative reals,
can be viewed as an abstract convex cone. The polytope is then indecomposable when
it is an atom in the ``face lattice" of this abstract convex cone.

Given a generalized permutohedron $P\subseteq {\dv R}^{N}$, if there is a non-trivial
decomposition of $P$ in the sense of Definition~\ref{def.ir-Mink-summand}, then $P$ has a
{\em polytopal\/} decomposition in the sense $P=P_{1}\oplus \ldots \oplus P_{k}$, $k\geq 2$,
where at least one of $P_{i}$, $1\leq i\leq k$ is {\em not} in the form
$\alpha\cdot P\oplus \{ v\}$ with $\alpha\geq 0$ and $v\in {\dv R}^{N}$; note this is
a corrected version of the definition of a decomposable polytope from \cite[p.\,318]{GKKZ03}.
Indeed, such a polytope $P_{i}=\alpha\cdot P\oplus \{ v\}$ is either equivalent to $P$
(if $\alpha >0$) or a singleton. But the Minkowski sum of singletons is a singleton, which has no non-trivial decomposition.
In particular, every generalized permutohedron that is an indecomposable polytope is
indecomposable in the sense of Definition \ref{def.ir-Mink-summand}.

Nevertheless, the converse is true as well. To observe that assume for a contradiction that $P$
is a~generalized permutohedron indecomposable in the sense of Definition~\ref{def.ir-Mink-summand} but
not an indecomposable polytope. Then, by \cite[Theorem~4]{mey74}, a finite collection
$Q_{1},\ldots , Q_{k}$, $k\geq 2$ of indecomposable polytopes exists such that
$P=Q_{1}\oplus\ldots\oplus Q_{k}$; without loss of generality assume that both $P$ and
$Q_{1},\ldots , Q_{k}$ are standardized. Each $Q_{i}$, $1\leq i\leq k$ is a~Minkowski summand of $P$, and, therefore,
using Lemma~\ref{lem.GP-Mink-sum} applied to $P$, a summand of the classic permutohedron.
Thus, every $Q_{i}$ is a generalized permutohedron, again by Lemma~\ref{lem.GP-Mink-sum}.
Since $P$ itself is assumed to be indecomposable in the sense of Definition~\ref{def.ir-Mink-summand} the fact
$P=Q_{1}\oplus\ldots\oplus Q_{k}$ implies there exists $Q_{i}$, $1\leq i\leq k$ equivalent to $P$.
By the second claim of Theorem~\ref{thm.ired} applied to $Q_{i}$, $P$~must be a positive multiple of $Q_{i}$.
As $Q_{i}$ is an indecomposable polytope, the same holds for $P$, which is the contradiction.
\end{remark}

Remark \ref{rem.indec-polytope} means our Theorem \ref{thm.ired} can be interpreted as follows: given a~generalized
permutohedron $P\subseteq {\dv R}^{N}$, we provide a necessary and sufficient condition
on $P$ being an indecomposable polytope. The criterion we give to decide that question
is based on solving a particular system of homogenous linear equations.

\begin{remark}\label{rem.Meyer}\rm
Meyer in his 1974 paper \cite{mey74} gave a criterion to recognize whether a given
polytope $P$ is indecomposable, which is also based on solving a system of homogenous linear equations.
The reader can ask whether our condition from \S\,\ref{ssec.main-formul} is the special case of Meyer's criterion.
The answer is that the two criteria differ significantly in terms of methodology and motivation.

Specifically, the condition (2) of Theorem 3 in \cite{mey74} characterizes indecomposability
of a polytope $P\subseteq {\dv R}^{N}$ in terms of a linear equation system, denoted $e[P]$ there.
To apply that result one needs to have a complete list of facets of $P$ at disposal; note that, in our context
of generalized permutohedra, the facets correspond to (some of the) subsets of $N$. If $P$ is full-dimensional,
that is, if $\dim (P)=|N|=n$, then each equation in $e[P]$ corresponds to a certain set of $n+1$ facets of $P$
which intersect in a vertex of $P$; see \cite[p.~79--80]{mey74} where
the equation system is specified. The condition (2) in \cite[Theorem 3]{mey74} can equivalently
be stated that the dimension of the space of solutions to $e[P]$ is $n+1$.
Meyer also considers an extended equation system. His idea is to add further
$n$ standardization equations corresponding to the translation of $P$ so that its Steiner point,
which is a kind of barycenter, is the zero vector.
Then his criterion turns into the condition that the dimension of the space of solutions to
the extended linear equation system is just 1; this is basically the condition (1) in \cite[Theorem 3]{mey74}.
In this aspect, our condition from \S\,\ref{ssec.main-formul} is analogous.

However, in Meyer's equation system, the indeterminates correspond to facets of $P$, that is,
in the context of generalized permutohedra, to subsets of $N$. Thus, it is clear from this observation
and the fact that the indeterminates of our system are the pairs (vertex,variable) (see \S\,\ref{ssec.interpret})
that our system and Meyer's system are methodologically different. Moreover, our criterion does
not require computing the facets of $P$, although this is not a big problem in the case of a generalized permutohedron.

Another note is that, in the case of generalized permutohedra, the linear equations in the system $e[P]$ used
by Meyer \cite{mey74} seem to have similar form as the linear (in)equalities provided
by Kuiper {\em et al.\/} \cite{KVV10} discussed in Remark \ref{rem.Kuipers}.
\end{remark}

\section{Relation to a former result by Rosenm\"{u}ller and Weidner}\label{sec.RW}

The paper by Rosenm\"{u}ller and Weidner \cite{RW74} offers another criterion to recognize
extreme supermodular functions. The reader may be interested in what features our new result
is different from their old one, if there is a substantial difference at all.

The answer is that our criterion characterizing extreme supermodular functions is indeed different
from their criterion, although analogous in certain aspects.
The main difference is that our characterization comes from a {\em min-representation\/} of
a supermodular function by means of {\em additive functions},
while the characterization by Rosenm\"{u}ller and Weidner is based on a {\em max-representation\/}
of a standardized supermodular game by means of {\em modular functions}. Below we give
some subtle arguments in favour of the opinion that the min-representation of a supermodular function
is more natural than its max-representation. Therefore, our characterization may appear to be more convenient.

\subsection{Recalling the criterion by Rosenm\"{u}ller and Weidner}\label{ssec.Rosen-recall}

The main obstacle to compare transparently both criteria was that the paper \cite{RW74}
had been written in technically awkward style. What follows is a kind of re-interpretation of their result.
We provide simpler presentation of their result (than the original one) and this allows us to explain clearly
in what aspects our result is different and in what aspects the results are analogous.
\smallskip

The paper \cite{RW74} deals with non-negative supermodular games.
A simple consideration, made in Remark \ref{rem.exteme}, allows one to observe their paper also
gives a criterion to recognize the extreme rays of the cone $\supmogame$. Any such a game can be
written as the maximum of finitely many modular functions $l$ on $\caP$ of a special form, namely
$$
l(S) =\left(\,\sum_{i\in S}\ z_{i}\right) - z_{\emptyset}\quad \mbox{for $S\subseteq N$},
\quad \mbox{where $z_{\emptyset}\geq 0$ and $z_{i}\geq 0$ for $i\in N$}
$$
are non-negative coefficients. The {\em max-representation\/}
of $m\in\supmogame$ has the form
\begin{equation}
m(S) =\max_{\tau\in\Omega}\, l^{\tau}(S) := \max_{\tau\in\Omega}~ \left(\,- z^{\tau}_{\emptyset}+\sum_{i\in S} z^{\tau}_{i}\right) \qquad \mbox{for $S\subseteq N$},
\label{eq.max-repr}
\end{equation}
where $\Omega$ is a finite index set identifying the modular functions. For each $\tau\in\Omega$, the modular function $l^{\tau}$ is specified by a vector in ${\dv R}^{n+1}$ of its non-negative coefficients $z^{\tau}_{\emptyset}$ and $z^{\tau}_{i}$, $i\in N$.
The indexing of modular functions in \eqref{eq.max-repr}
plays only an auxiliary role, since each modular function
can be identified with the vector of its coefficients, viewed
as a row-vector in ${\dv R}^{\{\emptyset\}\cup N}$. Thus, the max-representation
of $m\in\supmogame$ can alternatively be described by
a real $\Omega\times (\{\emptyset\}\cup N)$-array of the respective (non-negative) coefficients.
\smallskip

Rosenm\"{u}ller and Weidner \cite{RW74} give further technical conditions on the representation
\eqref{eq.max-repr}, which allows them to introduce a unique {\em canonical representation} for each $m\in\supmogame$, up to re-indexing. Specifically, each modular function $l^{\tau}$, $\tau\in\Omega$ is ascribed its ``carrier" $C^{\tau}$ and the class of sets ${\cal Q}^{\tau}$ at which the max-representation \eqref{eq.max-repr} is tight:
$$
C^{\tau} := \{\, i\in N\,:\ z^{\tau}_{i}>0\,\},
\quad
{\cal Q}^{\tau} :=  \{\, S\subseteq N\,:\ l^{\tau}(S)=m(S)\,\}\qquad
\mbox{for every $\tau\in\Omega$}.
$$
Every $S\subseteq N$ is assigned a ``tuft" of its subsets differing from it in at most one element:
$$
\mbox{tuft}\,(S) := \{\, T\subseteq S\,:\ |S\setminus T|\leq 1\,\} = \{ S\}\cup \bigcup_{i\in S}\: \{\, S\setminus\{ i\} \,\}\,.
$$
The technical conditions are as follows:
\begin{itemize}
\item[(i)] $\forall\, \tau\in\Omega \quad \mbox{tuft}\,(C^{\tau})\subseteq {\cal Q}^{\tau}$,
\item[(ii)] $\forall\, S\subseteq N ~~\exists\, \tau\in\Omega \quad \mbox{tuft}\,(S)\subseteq {\cal Q}^{\tau}$,
\item[(iii)] $\forall\, \tau,\pi\in\Omega \qquad {\cal Q}^{\pi}\subseteq {\cal Q}^{\tau}
~\Rightarrow~ \pi =\tau$.
\end{itemize}
Note that (i)-(iii) is our formally weakened re-formulation of the conditions
(1)-(3) from Theorem~2.5 in \cite{RW74} which, however, leads to the same concept of a canonical max-representation.
The existence of a max-representation of $m\in\supmogame$ satisfying (i)-(iii) can be
shown as follows. One puts $\Omega =\caP$ and ascribes a modular function $l^{T}$
to every $T\in\Omega$ by defining directly its coefficients:
\begin{eqnarray}
z^{T}_{i} &:=& m(T)-m(T\setminus\{ i\}) ~~ \mbox{for $i\in N$}, \label{eq.RW-formula}\\
z^{T}_{\emptyset} &:=& -m(T)+\sum_{i\in T} z^{T}_{i} = (|T|-1)\cdot m(T)-\sum_{i\in T} m(T\setminus\{ i\})\,. \nonumber
\end{eqnarray}
Then \eqref{eq.max-repr} holds and (i)-(ii) are fulfilled.
Finally, $\Omega$ is reduced so that repeated occurrences of functions are removed and, to ensure (iii), the elements $T\in\Omega$
with non-maximal tightness set classes ${\cal Q}^{T}$ (with respect to inclusion) are dropped. We refer to \cite{RW74}
for the arguments why every max-representation of $m$ satisfying (i)-(iii)
has the above form.
\smallskip

A necessary and sufficient condition on $m$ to be extreme, called {\em non-degeneracy\/}
by Rosenm\"{u}ller and Weidner \cite{RW74}, is that a certain system of
linear equations on the elements of the above mentioned
$\Omega\times (\{\emptyset\}\cup N)$-array (given by the canonical max-representation) has a unique solution up to a real multiple.
Specifically, one can put
$$
{\cal Q}^{\tau}_{0} := {\cal Q}^{\tau}\cap \{\, S\subseteq N\, :\ m(S)=0\,\}=
\{\, S\subseteq N\, :\ 0=l^{\tau}(S)=m(S)\,\}\quad \mbox{for $\tau\in\Omega$}
$$
and consider the following system of linear constraints on a real array
$y\in{\dv R}^{\Omega\times (\{\emptyset\}\cup N)}$:
\begin{itemize}
\item[(x)] $\forall\, \tau\in\Omega \qquad \mbox{if\, $i\in N\setminus C^{\tau}$ then}
~~ y(\tau ,i) =0$,
\item[(y)] $\forall\, S\subseteq N ~~~\forall\, \tau ,\pi\in\Omega ~~\mbox{such that $S\in {\cal Q}^{\tau}\cap {\cal Q}^{\pi}$}$\\[0.6ex]
\hspace*{2cm} $-y(\tau ,\emptyset )+\sum_{i\in S}\, y(\tau ,i) = -y(\pi ,\emptyset )+\sum_{i\in S}\, y(\pi ,i)$,
\item[(z)] $\forall\, \tau\in\Omega ~~~\forall\, S\in {\cal Q}^{\tau}_{0} \qquad
-y(\tau ,\emptyset)+\sum_{i\in S}\, y(\tau ,i) =0$.
\end{itemize}
Again, (x)-(z) is simplified, but equivalent, formulation of the conditions (3.1)-(3.3) from \cite{RW74}.
The starting array $y(\tau ,j)=z^{\tau}_{j}$ for $\tau\in\Omega$ and $j\in \{\emptyset\}\cup N$
given by \eqref{eq.RW-formula} is a solution to (x)-(z). The main result of \cite{RW74} is that
$m$ is extreme iff the system of linear constraints (x)-(z) has a unique solution up to a multiple constant.

Observe that the condition of non-degeneracy is analogous to our condition from
\S\,\ref{ssec.main-formul}: in both cases the requirement is that a solution to
a system of linear constraints on the element of the respective real array is unique
up to a multiple. Even the conditions are analogous: (a) corresponds to (x), (b) to
(y) and (z) is a further condition forced by the presence of an additional component
in the max-representation. On the other hand, the rows in the arrays do not correspond to each other:
in case of our condition from \S\,\ref{ssec.main-formul} they correspond to additive
upper bounds for $m$, while in case of the non-degeneracy condition they describe modular lower bounds
for $m$. The relation is illustrated by a~simple example.

\begin{example}\label{exa.RW-compar}
Assume $N=\{a,b,c\}$ and consider the game $m$ over $N$ given by
$$
m=2\cdot\delta_{N} +  \delta_{\{a,b\}} + \delta_{\{a,c\}} + \delta_{\{b,c\}}.
$$
In Example \ref{exa.positive}, we have verified that $m$ generates an extreme ray of $\supmogame$
using our criterion from \S\,\ref{ssec.main-formul}, based on the min-representation of $m$.
As concerns the max-representation, the above described procedure based on \eqref{eq.RW-formula}
results in five different modular functions, given by the following vectors
$[z_{\emptyset} \,|\, z_{a},z_{b},z_{c}]$ of coefficients:
$$
[0\,|\,0,0,0],\quad [1\,|\,1,1,0],\quad [1\,|\,1,0,1],\quad [1\,|\,0,1,1],\quad [1\,|\,1,1,1]\,.
$$
The middle three of them can be dropped because they do not give maximal tightness set classes.
The canonical max-representation can be arranged into an
$\Omega\times (\{\emptyset\}\cup N)$-array with $\Omega =\{\mu ,\nu\}$:
$$
\bbordermatrix{
   & \emptyset & \VR a & b & c \cr
 \mu & 0 & \VR 0 & 0 &ÃÂ 0 \cr
 \nu & 1 & \VR 1 & 1 & 1 \cr }\,.
$$
Table \ref{tab.0} indicates by bullets and by checkmarks what are the corresponding tightness sets
in ${\cal Q}^{\tau}\setminus {\cal Q}^{\tau}_{0}$ and in ${\cal Q}^{\tau}_{0}$, for $\tau\in\Omega$, respectively.
Now, solving (x)-(z) first gives $y(\mu ,a)=y(\mu ,b)=y(\mu ,c)=0$ by (x) and then
$y(\mu ,\emptyset)=0$ by (z). Then $\{a\}\in {\cal Q}^{\nu}\cap {\cal Q}^{\mu}$
gives by (y) $y(\nu, a)- y(\nu,\emptyset)=y(\mu, a)- y(\mu,\emptyset)=0$ and, analogously,
$y(\nu,\emptyset)=y(\nu ,a)=y(\nu ,b)=y(\nu ,c)$. Thus, $m$ is extreme by the non-degeneracy criterion
of Rosenm\"{u}ller and Weidner. Note that (x)-(z) has a unique solution up to a constant
even if we do not drop the modular functions with non-maximal tightness set classes from the
``starting" max-representation with five rows.
\end{example}

\begin{table}
$$
\begin{array}{lcccccccc}
 &\emptyset & \{a\} & \{b\} & \{c\} &\{a,b\} & \{a,c\} &\{b,c\} & N\\ \cline{2-9}
\mu &\checkmark & \checkmark & \checkmark & \checkmark &&&&\\
\nu && \checkmark & \checkmark & \checkmark &\bullet  &\bullet  &\bullet  &\bullet \\
\end{array}
$$
\caption{The tightness sets in the max-representation from Example \ref{exa.RW-compar}.}\label{tab.0}
\end{table}

\subsection{The min-representation versus the max-representation}\label{Rosen-compare}

The class of supermodular games is neither closed under minimization nor closed under maximization.
Indeed, in the case $N=\{ a,b,c\}$ one has
$$
\delta_{N}+\delta_{\{a,b\}}+\delta_{\{a,c\}}= \max\, \left\{
\delta_{N}+\delta_{\{a,b\}} \,,\,
\delta_{N}+\delta_{\{a,c\}}
\right\},
$$
while in the case  $N=\{ a,b,c,d\}$ one has
\begin{eqnarray*}
\lefteqn{3\cdot\delta_{N}+2\cdot\delta_{\{a,b,c\}}+2\cdot\delta_{\{a,b,d\}}}\\
&=& \min\, \left\{
4\cdot\delta_{N}+2\cdot\delta_{\{a,b,c\}}+2\cdot\delta_{\{a,b,d\}} \,,\,
3\cdot\delta_{N}+2\cdot\delta_{\{a,b,c\}}+2\cdot\delta_{\{a,b,d\}}+\delta_{\{a,b\}}
\right\}.
\end{eqnarray*}
Therefore, none of these two representation modes has plain advantage.
\smallskip

Nevertheless, there are fine arguments in favor of the min-representation.
They apply once one decides to interpret a set function as a function defined on
the vertices of the hypercube and to consider its extensions. Indeed, one can embed $\caP$ into $[0,1]^{N}$
using the mapping $S\mapsto \incS_{S}$ for $S\subseteq N$; see \eqref{eq.incidence}.
The core-based min-representation of $m\in\supmogame$ is related to the concept of the {\em Lov\'{a}sz extension} as defined in \cite{lov83}.
Indeed, \eqref{eq.min-weber} has the form
$$
m(S)=\min_{\tau\in\Upsilon}\, \sum_{j\in N}\, x^{\tau}_{j}\cdot \incS_{S}(j)
\quad \mbox{where $S\subseteq N$ and $x^{\tau} := [x^{m}(\tau ,i)]_{i\in N}$ for $\tau\in\Upsilon$.}
$$
This naturally leads to the extension $\lov$ of $m$ on the hypercube:
\begin{equation}
\lov (y) := \min_{\tau\in\Upsilon}\, \sum_{j\in N} x^{\tau}_{j}\cdot y_{j}
\qquad
\mbox{for $y\in [0,1]^{N}$.}
\label{eq.min-Lov-ext}
\end{equation}
Since pointwise minimum of affine functions is concave, $\lov$ is a {\em concave function\/} on $[0,1]^{N}$.
It follows from the basic facts about the Lov\'{a}sz extension $\widehat{m}$ (see \ref{sec.apex-supermod})
that verifying $\lov=\widehat{m}$ over $[0,1]^{N}$ boils down to show that, for any enumeration
$\pi\in\Upsilon$, $\lov$ is a linear function on the simplex\footnote{In this section,
by a {\em simplex} we understand the convex hull of an affinely independent set of vectors.}
$$
\nabla_{\pi} := \conv \{\, \incS_{S}\, :\ S\in {\cal C}_{\pi} \} \subseteq [0,1]^{N}\,.
$$
Indeed, the relations \eqref{eq.min-Lov-ext} and then \eqref{eq.chain}, \eqref{eq.min-weber}
imply, for any $S\in {\cal C}_{\pi}$,
$$
\lov (\incS_{S}) \stackrel{\eqref{eq.min-Lov-ext}}{\leq}
\sum_{j \in N} x^{\pi}_{j}\cdot\incS_{S}(j)=\sum_{i\in S}\, x^{\pi}_{i} \stackrel{\eqref{eq.chain}}{=} m(S)
\stackrel{\eqref{eq.min-weber}}{=} \min_{\tau\in\Upsilon}\, \sum_{i\in S}\, x^{\tau}_{i}=
\min_{\tau\in\Upsilon}\,\sum_{j\in N} x^{\tau}_{j}\cdot \incS_{S}(j)
\stackrel{\eqref{eq.min-Lov-ext}}{=}\lov (\incS_{S}).
$$
Hence, the first inequality must be the equality, which allows one to conclude that $\lov$ coincides with the linear function
$y\in\nabla_{\pi} \mapsto \sum_{j\in N}x^{\pi}_{j}\cdot y_{j}$.
\smallskip

On the other hand, the form of canonical max-representation \eqref{eq.max-repr} of $m\in\supmogame$
leads to introducing another extension $\ros$ of $m$, namely
\begin{equation}
\ros (y) := \max_{\tau\in\Omega}~ (- z^{\tau}_{\emptyset}+ \sum_{j\in N} z^{\tau}_{j}\cdot y_{j} )
\qquad
\mbox{for $y\in [0,1]^{N}$, where $z^{\tau}:=[z^{\tau}_{i}]_{i\in \{\emptyset\}\cup N}$.}
\label{eq.max-RW-ext}
\end{equation}
This extension, inspired by the max-representation by Rosenm\"{u}ller and Weidner \cite{RW74},
is the pointwise maximum of affine functions, and, therefore, a {\em convex function\/} on $[0,1]^{N}$.
The specialty of this extension $\ros$ is that, for any $S\subseteq N$,
it is affine on the simplex
$$
\nabla_{S} := \conv \{\, \incS_{T}\, :\ T\in \mbox{tuft}\,(S)\,\} \subseteq [0,1]^{N}\,.
$$
Indeed, this conclusion can be derived (by an analogous consideration as in the case of $\lov$) from the condition (ii)
in the definition of the canonical max-representation and \eqref{eq.max-repr}.
\smallskip

Let us compare both extensions. First, $\lov$ and $\ros$ coincide on the edges of
the hypercube $[0,1]^{N}$, which are the segments connecting $\incS_{S}$ and $\incS_{S\setminus\{i\}}$ for $i\in S\subseteq N$.
One can also show that $\lov (y)\geq\ros (y)$ for any $y\in [0,1]^{N}$.
The concave extension $\lov$ is affine on every $\nabla_{\pi}$, $\pi\in\Upsilon$
and these are full-dimensional simplices covering $[0,1]^{N}$. On the other hand,
the convex extension $\ros$ is ensured to be affine on other simplices $\nabla_{S}$, $S\subseteq N$,
which are lower-dimensional in general.
These do not cover $[0,1]^{N}$ and $\nabla_{N}$ is the only one of them that is full-dimensional.
To illustrate the difference note that, for distinct $a,b\in N$ and $Z\subseteq N\setminus\{ a,b\}$,
the values in $y=\frac{1}{2}\cdot \incS_{\{ a\}\cup Z}+\frac{1}{2}\cdot \incS_{\{ b\}\cup Z}=
\frac{1}{2}\cdot \incS_{\{ a,b\}\cup Z}+\frac{1}{2}\cdot \incS_{Z}$ are
$\lov (y)= \frac{1}{2}\cdot m(\{ a,b\}\cup Z)+\frac{1}{2}\cdot m(Z)$ and
$\ros (y)=\frac{1}{2}\cdot m(\{ a\}\cup Z)+\frac{1}{2}\cdot m(\{ b\}\cup Z)$.

The subtle argument why $\lov\,$ is more natural extension than $\ros$ is as follows.
The maximal domains in $[0,1]^{N}$ on which $\lov\,$ is affine are the simplices $\nabla_{\pi}$, $\pi\in\Upsilon$.
In particular, the vertices of these maximal linearity domains for $\lov$ are the vertices of the hypercube.
This is the case no matter what is the extended game $m\in\supmogame$.
However, the maximal domains in $[0,1]^{N}$ on which $\ros$ is affine may vary, they
could have vertices with fractional coordinates.
On the top of that, the vertices of the maximal affine domains for $\ros$ do
depend on the extended game $m$.

\begin{table}
\[
\begin{array}{cccccccccc}
~ & [\,x_{a}, & x_{b}, & x_{c} \,] &~~& \emptyset & \{a,b\} & \{a,c\} &\{b,c\} & N\\[0.3ex] \hline
  & [\, 1, & 1, & 1\,] && &&&&\bullet  \\
  & [\, 1, & 1, & 0\,] && &\bullet &&&\bullet  \\
  & [\, 1, & 0, & 1\,] && &&\bullet &&\bullet  \\
  & [\, 0, & 1, & 1\,] & & &&&\bullet & \bullet \\
  & [\, 1, & 0, & 0\,] && \bullet &\bullet & \bullet && \\
  & [\, 0, & 1, & 0\,] && \bullet &\bullet &&\bullet &  \\
  & [\, 0, & 0, & 1\,] &&  \bullet &&\bullet &\bullet & \\
  & [\, 0, & 0, & 0\,] &&  \bullet &&&& \\[0.4ex]
  & [\, \frac{1}{2}, & \frac{1}{2}, & \frac{1}{2}\,] &&  \bullet &\bullet &\bullet &\bullet & \bullet \\[0.4ex]
  & [\, 1, & \frac{1}{3+2\cdot\lambda}, & \frac{1+\lambda}{3+2\cdot\lambda}\,] &&  &\bullet &\bullet && \bullet \\[0.4ex]
  & [\,  \frac{1}{3+2\cdot\lambda}, & 1, & \frac{1+\lambda}{3+2\cdot\lambda}\,] &&  &\bullet &&\bullet & \bullet \\[0.4ex]
  & [\, \frac{1+\lambda}{3+2\cdot\lambda} & \frac{1+\lambda}{3+2\cdot\lambda} & 1\,] && &&\bullet &\bullet & \bullet \\[0.4ex] 
\end{array}
\]
\caption{Vertices of tightness domains in the max-representation from Example \ref{exa.RW-exten}.}\label{tab.1}
\end{table}

\begin{example}\label{exa.RW-exten}
Assume $N=\{a,b,c\}$ and consider a parameterized class of games over $N$ ~
$$
m_{\lambda}=(3+\lambda)\cdot\delta_{N} + (1+\lambda)\cdot\delta_{\{a,b\}} + \delta_{\{a,c\}} + \delta_{\{b,c\}},
\qquad \mbox{where $\lambda\geq 0$.}
$$
Then, no matter what $\lambda$ is, the canonical max-representation $\ros$ of $m_{\lambda}$
consists of five modular functions, namely $l^{T}$ given by \eqref{eq.RW-formula} for\,
$T\in {\cal T} = \{\, \emptyset , \{a,b\}, \{ a,c\}, \{b,c\}, N\,\}$.

Thus, $[0,1]^{N}$ splits into five tightness domains, namely
$\{ y\in [0,1]^{N}\, :\ l^{T}(y)=\ros (y)\}$ for $T\in {\cal T}$.
These polytopes are determined by twelve points in the hypercube given by Table \ref{tab.1}
in which the bullets indicate to which tightness domains they belong.
For example, the tightness domain for $T=\{ a,b\}$ has six vertices, besides three vertices of the
two-dimensional simplex $\nabla_{\{ a,b\}}$ it has three other vertices, namely
$$[{1}/{2},{1}/{2},{1}/{2}],~
[1, {1}/{(3+2\cdot\lambda)},{(1+\lambda)}/{(3+2\cdot\lambda)}], ~[{1}/{(3+2\cdot\lambda)}, 1, {(1+\lambda)}/{(3+2\cdot\lambda)}]\,.
$$
Observe that the domains are different for different parameters $\lambda\geq 0$, that is, for different represented games $m_{\lambda}$.
\end{example}

\begin{remark}\rm\label{rem.RW-misuse}
The motivation for the max-representation of supermodular games given in \cite[p.\,245]{RW74}
is a~little bit strange. Rosenm\"{u}ller and Weidner seem to misuse the terminology,
specifically, the facts that supermodular games are named ``convex" in game theory
and that the functional form of a modular set function is analogous to
that of an affine point function, for which reason they re-name modular functions to ``affine".
They seem to segue from the fact the any convex function (of a real variable) is the maximum
of affine functions (of a real variable) to the idea of represent any supermodular set function
as the maximum of a collection of modular set functions. However, the reasons why a supermodular
game is named ``convex" are completely different. The main reason is recalled in Remark \ref{rem:coopconvex}.
Another minor reason is that an important special case of a~supermodular game is the so-called
{\em convex measure game} treated in \S\,\ref{ssec.conv-measure-game}, defined as the composition of
a real convex function with a non-negative additive set function.
\end{remark}

Another supportive argument in favor of the min-representation is that the canonical max-representation
introduced by Rosenm\"{u}ller and Weidner \cite{RW74} does not have such an elegant geometric interpretation
as the core-based min-representation has. Indeed, the vertices of the core $\cor (m)$ for $m\in\supmogame$
coincide with the vertices of its extended version
$$
E(m) := \left\{\, [v_{i}]_{i\in N} : ~
\forall\, S\subseteq N ~\sum_{i\in S} v_{i}\geq m(S)\,\right\},
$$
which naturally corresponds to the min-representation of $m$ by means of additive functions.
Note that one can show that $\cor (m)$ is the Pareto minimum of $E(m)$; compare with Theorem~2.3 from \cite{fuj91}.
However, the vertices of the following polyhedron
$$
R(m) := \left\{ z\in {\dv R}^{\{\emptyset\}\cup N}\, : ~ z_{\emptyset}\geq 0 , z_{i}\geq 0,~~ i\in N
 ~~\& ~~ -z_{\emptyset}+\sum_{i\in S} z_{i}\leq m(S),~~ S\subseteq N\,\right\},
$$
which naturally corresponds to the max-representation of $m$ by means of modular functions of the considered type,
need not have the form of the canonical max-representation.

\begin{example}\label{exa.RW-core}
Assume $N=\{a,b,c,d\}$ and consider the game $m_{\dag}$ from Example \ref{exa.Vamosi}.
Then the above polyhedron $R(m_{\dag})$ has 13 vertices but only 11 of them correspond to sets $T\subseteq N$ in the sense
\eqref{eq.RW-formula}. The remaining two vertices $[z_{\emptyset}\,|\,z_{a},z_{b},z_{c},z_{d}]$ of $R(m_{\dag})$ are $[2\,|\,2,1,1,1]$ and $[2\,|\,1,2,1,1]$.
The tightness set class for the modular function given by
$[z_{\emptyset}\,|\,z_{a},z_{b},z_{c},z_{d}]=[2\,|\,2,1,1,1]$ is the class
$$
{\cal Q} = \{\,  \{ a\}, \{ a,b\}, \{ a,c\}, \{ a,d\}, \{ c,d\}, \{ a,b,c\}, \{ a,b,d\}, \{ a,c,d\}\, \},
$$
which strictly contains the tightness set class for the function $l^{\{ a,c,d\}}$ from the canonical max-representation of $m_{\dag}$.
Thus, $R(m_{\dag})$ leads to a tighter max-representation than the canonical max-representation.
\end{example}

The facts mentioned above lead us to the opinion that, for a supermodular game $m$, the concave extension proposed
by Lov\'{a}sz \cite{lov83} is more natural than the convex extension \eqref{eq.max-RW-ext}
inspired by Rosenm\"{u}ller and Weidner \cite{RW74}, although they both
may appear to be convenient. For this reason we prefer the min-representation to the max-representation.

\section{Remarks on other extremality criteria}\label{sec.remark-other}

In the 1970's two other papers were published which provide very simple
criteria to recognize extreme supermodular functions in two special cases.
These criteria basically consist in testing whether a collection of linear
constraints on a vector in ${\dv R}^{N}$ has a unique solution.

The 1973 paper by Rosenm\"{u}ller and Weidner \cite{RW73}, which
had probably been the source of inspiration for their later 1974 general criterion \cite{RW74},
provides such a criterion for {\em convex measure games}, that is, for games expressible
as compositions of a convex function with a non-negative additive set function.

The 1978 paper by Nguyen \cite{ngu78} deals with non-decreasing submodular games
and gives an analogous criterion for extremality of such games in case they correspond to
a {\em matroid} \cite{oxl92}; of course, his criterion can be ``converted" to the supermodular
case. Moreover, Nguyen also presented in \cite{ngu78} a kind of extension to the case of
a general non-decreasing submodular game based on (his) concept of a matroidal {\em expansion}.
Nevertheless, that extension of his does not seem to lead to a practical criterion to
test extremality of such general games.

The aim of this section is to recall those special criteria and illustrate, by means of examples,
that they differ from each other and from our new criterion as well.

\subsection{The case of convex measure games}\label{ssec.conv-measure-game}
The paper \cite{RW73} deals with non-negative supermodular games $m$ satisfying $m(N)=1$. The motivational
task is when a convex measure game $m$ is an extreme point of this polytope. Some starting intuitive consideration
leads the authors to the restriction to a particular form of convex measure games.
Specifically, provided one omits the trivial case of modular $m$, the game is assumed to be a
composition $m=f^{\alpha}\circ\mu$ where $\mu :\caP\rightarrow [0,1]$ is an additive set function with
$\mu (N)=1$ and $f^{\alpha}:[0,1]\rightarrow [0,1]$ a~convex function of the form
$$
f^{\alpha}(t) = \frac{1}{1-\alpha}\cdot\max\,\{ t-\alpha , 0\}\quad
\mbox{for~} t\in [0,1], \quad \mbox{determined by a parameter $0<\alpha < 1$.}
$$
Rosenm\"{u}ller and Weidner \cite{RW73} show that such a representation of  $m$
in terms of $\mu$ and $\alpha$ is unique under an~additional requirement
$\mu (\{ i\})\leq 1-\alpha$ for $i\in N$.

The necessary and sufficient condition for a convex measure game $m$ to be extreme in terms of
such a~``canonical" representation is as follows. One introduces the class of sets
$$
{\cal T} := \{\, T\subseteq N\,:\ \mu (T)=\alpha\}
$$
and the result is that $m$ is extreme iff the system of linear constraints
\begin{equation}
\forall\,T\in {\cal T}\qquad  \sum_{i\in T} x_{i} = 0
\label{eq.RW73}
\end{equation}
on real numbers $x_{i}$, $i\in N$ has a unique solution, namely $\bar{x}_{i}=0$ for any $i\in N$.

Note that the non-degeneracy condition from \cite{RW74} mentioned in \S\,\ref{ssec.Rosen-recall}
can be interpreted as an indirect generalization of this simple condition. Indeed, the above
representation of a (standardized) convex measure game $m=f^{\alpha}\circ\mu$ means it has a max-representation
consisting of two modular functions only, namely $l^{N}$ and $l^{\emptyset}\equiv 0$ given by \eqref{eq.RW-formula}.
The class ${\cal T}$ is then nothing but the tightness set class ${\cal Q}^{N}_{0}\equiv {\cal Q}^{N}\cap {\cal Q}^{\emptyset}$.
For this reason, despite that the canonical max-representation of $m$ mentioned in \S\,\ref{ssec.Rosen-recall} may
involve additional functions, the constraints (x)-(z) from \S\,\ref{ssec.Rosen-recall} have a unique solution up to a multiple iff
\eqref{eq.RW73} has solely the zero solution. We leave the verification of this statement as a simple exercise to the reader.
The above statements are illustrated by the following example.

\begin{example}\label{exa.RW-73-74}
Assume $N=\{a,b,c,d\}$ and consider the game $m$ over $N$ given by
$$
m = \delta_{N} + \frac{1}{2}\cdot\delta_{\{a,b,c\}} + \frac{1}{2}\cdot\delta_{\{a,b,d\}} + \frac{1}{2}\cdot\delta_{\{a,c,d\}}\,.
$$
It is a convex measure game represented by
$\mu (a)={2}/{5}$, $\mu (b)=\mu (c)=\mu (d)={1}/{5}$ and $\alpha={3}/{5}$.
In particular, the class ${\cal T}$ consists of four sets, namely $\{ a,b\}$, $\{ a,c\}$, $\{ a,d\}$ and $\{ b,c,d\}$.
Clearly, the only solution to \eqref{eq.RW73}, that is, of
$$
x_{a}+x_{b} = x_{a}+x_{c} = x_{a}+x_{d} = x_{b}+x_{c}+x_{d} = 0
$$
is the zero vector in this case.
On the other hand, the canonical max-representation given by \eqref{eq.RW-formula} consists of five modular functions.
Their coefficients and tightness set classes are given in Table \ref{tab.2}.
Since ${\cal Q}^{N}\cup {\cal Q}^{\emptyset}=\caP$, $m$ has a max-representation consisting solely of
$l^{N}$ and $l^{\emptyset}\equiv 0$. Moreover, ${\cal Q}^{N}_{0}={\cal Q}^{N}\cap {\cal Q}^{\emptyset}={\cal T}$.
Hence, the equations (z) from \S\,\ref{ssec.Rosen-recall} for $y_{i}:= y(N,i)$, $i\in \{\emptyset\}\cup N$ lead to
$\sum_{i\in S} y_{i}=y_{\emptyset}$ for any $S\in {\cal T}$, which then reduces to
$\sum_{i\in S} y_{i}=\sum_{j\in T} y_{j}$ for any $S,T\in {\cal T}$. We also know
that this reduced system has a positive solution, for example, $y_{i}=\mu (i)$ for $i\in N$.
A simple consideration leads to a conclusion that the reduced system has a unique solution up to a multiple iff
the only solution of \eqref{eq.RW73} is $\bar{x}_{i}=0$ for $i\in N$.

\begin{table}
\[
\bbordermatrix{
 ~~~T\subseteq N  & \emptyset & \VR a & b & c & d \cr
 \{ a,b,c,d\}~~ & \frac{3}{2} & \VR 1 & \frac{1}{2} & \frac{1}{2} & \frac{1}{2} \cr
 ~\{a,b,c\} & 1 & \VR \frac{1}{2} & \frac{1}{2} & \frac{1}{2} & 0 \cr
 ~\{a,b,d\} & 1 & \VR \frac{1}{2} & \frac{1}{2} &Â 0 & \frac{1}{2} \cr
 ~\{a,c,d\}  & 1 & \VR \frac{1}{2} & 0 & \frac{1}{2} & \frac{1}{2} \cr
 ~~~~~\emptyset & 0 & \VR 0 & 0 &Â 0 & 0 \cr}
 \qquad
 \begin{array}{l}
{\cal Q}^{T}\\
 ab, ac, ad, abc, abd, acd, bcd, abcd\\
 ab, ac, bc, abc, bcd\\
 ab, ad, bd, abd, bcd\\
 ac, ad, cd, acd, bcd\\
 \emptyset , a, b, c, d, ab, ac, ad, bc, bd, cd, bcd\\
 ~~\\
 \end{array}
\]
\caption{The max-representation and tightness set classes in Example \ref{exa.RW-73-74}.}\label{tab.2}
\end{table}

Note that there are seven vertices of the core of $m$, namely $[x_{a},x_{b},x_{c},x_{d}]$ of the form
$$
[1,0,0,0],\ [{1}/{2},{1}/{2},0,0],\ [{1}/{2},0, {1}/{2},0],\ [{1}/{2},0,0,{1}/{2}],\
 [0,{1}/{2},{1}/{2},0],\ [0,{1}/{2},0, {1}/{2}],\ [0,0,{1}/{2},{1}/{2}]\,.
$$
Thus, the system of linear equations (a)-(b) from \S\,\ref{ssec.main-formul} involves many more variables
than the special system \eqref{eq.RW73}. One can certainly expect this to happen because the criterion from \cite{RW73}
was particularly tailored for the case of convex measure games.
\end{example}

On the other hand, if one does try to apply the criterion based on \eqref{eq.RW73} to some given
game over $N$, say to the normalized version of the game $t^{\star}$ from Example \ref{exa.negative},
then one spends quite a lot of time to compute the respective ``canonical" representation.
In the case of $t^{\star}/22$ one finally gets $\alpha ={1}/{2}$, $\mu (a)={5}/{22}$, $\mu (b)={4}/{11}$,
and $\mu (c)={9}/{22}$, which results in the empty class ${\cal T}$. Thus, the application of the criterion
based on \eqref{eq.RW73} itself is trivial, but a~tedious task may be to get the required ``canonical" representation.

\subsection{The case of matroids}\label{ssec.matroid}
The paper \cite{ngu78} deals with the cone of non-decreasing submodular games. A special case of such
a game is the {\em rank function\/} of a {\em matroid}, which is an integer-valued non-decreasing submodular game $r$
satisfying $r(S)\leq |S|$ for any $S\subseteq N$. Note that an apparently weaker but equivalent formulation of
the latter condition is $r(\{ i\})\leq 1$ for any $i\in N$.

Theorem 2.1.5 of \cite{ngu78} gives a necessary and sufficient condition for a rank function $r$ of a matroid to generate
an extreme ray of the above cone. To formulate that result in a suitable way we need the next concept.

\begin{definition}\label{def.support}\rm
The {\em support\/} of a game $r\in {\dv R}^{N}$ is the least set $M\subseteq N$ such that
$$
\forall\, S\subseteq N\qquad r(S)=r(S\cap M)\,.
$$
\end{definition}

The appropriate formulation of the condition from \cite{ngu78} is that the corresponding matroid
restricted to the support of $r$ is {\em connected}, which is a well-known concept in matroid theory;
see \cite[chapter\,4]{oxl92}. Nevertheless, the condition has an alternative formulation in terms of linear
constraints on a vector in ${\dv R}^{M}$, where $M$ is the support of $r$.
Specifically, the rank function $r$ defines the class of matroidal {\em bases} (compare \cite[\S\,1.3]{oxl92}):
$$
{\cal B} := \{\, B\subseteq N\, :\ B \mbox{ maximal such that~} r(B)=|B|\,\}\equiv
\{\, B\subseteq N\, :\ r(B)=|B|=r(N)\,\}.
$$
It makes no problem to show that the support $M$ of $r$ coincides with the union of bases.
Provided $M\equiv\bigcup {\cal B}\neq\emptyset$, the condition is that the system of linear constraints
\begin{equation}
\forall\, B,C\in {\cal B}\qquad  \sum_{i\in B}\, y_{i}=\sum_{j\in C}\, y_{j}
\label{eq.nguen}
\end{equation}
on real numbers $y_{i}$, $i\in M$ has a unique solution up to a multiple.
Since all the bases of a~matroid have the same cardinality $r(N)$,
\eqref{eq.nguen} always has a constant solution $\bar{y}_{i}=u\in {\dv R}$, $i\in M$, and
the condition can equivalently be stated that the linear equation system
$\sum_{i\in B} x_{i}=0$ for $B\in {\cal B}$  has solely the zero solution
$\bar{x}_{i}=0$ for $i\in M$.

\begin{remark}\label{rem.Nguyen-error}\rm
Theorem 2.1.5 in \cite{ngu78} was formulated in a slightly misleading way. In fact, it says that
``{\sl $r$ is extreme in the respective cone iff the matroid is connected}".
This is not true as stated. Here is a simple counter-example: consider $N=\{a,b,c\}$ and put
$$
r = \delta_{\{a,b,c\}} + \delta_{\{a,b\}} + \delta_{\{a,c\}} + \delta_{\{b,c\}}
+ \delta_{\{a \}} + \delta_{\{ b\}},
$$
which is the rank function of a matroid over $N$ with bases $\{a \}$ and $\{ b\}$. The corresponding
matroid is {\em not\/} connected despite that $r$ {\em does} generate an extreme ray of the cone.
The reason why the matroid is not connected is that $\{ a,b\}$ and $\{c\}$ are non-trivial separators; see \cite[\S\,4.2]{oxl92}
for related concepts.

The point is that an additional technical assumption $r(\{ i\})=1$ for any $i\in N$
is tacitly used despite it is omitted in the formulation of \cite[Theorem~2.1.5]{ngu78}.
The assumption means that the corresponding matroid has no {\em loops}, see \cite[p.\,13]{oxl92}
for the related concept. It is indeed applied in the proof, specifically on page 378 of \cite{ngu78},
the implication (iii)$\Rightarrow$(i).
This tacit assumption is stated in \cite{ngu78} earlier in the text as a convention,
which is, unfortunately, hidden more than one page before the very formulation of Theorem 2.1.5.
However, in our paper, we have chosen to re-formulate Nguyen's result in such a way that the above-mentioned technical assumption is avoided.
\end{remark}

To put that result into our context realize that a submodular game $r$ over $N$ is non-decreasing iff $r(N)\geq r(N\setminus \{i\})$ for $i\in N$.
Thus, one can always write $r$ as the sum of a (non-negative) modular game and a submodular game $\bar{r}$ satisfying
\begin{equation}
\bar{r}(N)=\bar{r}(N\setminus \{i\})\qquad \mbox{for any $i\in N$}.
\label{eq.u-stand}
\end{equation}
Of course, the modular game is the linear combination of $m^{\uparrow i}$, $i\in N$
with non-negative coefficients $r(N)-r(N\setminus\{ i\})$, $i\in N$.
In fact, such a decomposition of $r$ is uniquely determined: to this end introduce the notation
\begin{eqnarray*}
\komstagame && \mbox{for the linear space of games $\bar{r}$ over $N$ satisfying \eqref{eq.u-stand},}\\
\submogame && \mbox{for the cone of submodular games $\bar{r}$ over $N$ satisfying \eqref{eq.u-stand},}
\end{eqnarray*}
and realize that the only modular game in $\komstagame$ is the zero function.
In particular, the above result from \cite{ngu78} essentially gives a criterion to recognize whether a rank function $\bar{r}$
of a matroid satisfying \eqref{eq.u-stand} generates an extreme ray of $\submogame$.

One can transform $\komstagame$ by an invertible linear mapping onto $\stagame$ which transforms
$\submogame$ onto $\supmogame$. In fact, there are two such  suitable transformations,
which are complementary to each other; we discuss this complementarity topic later in Remark \ref{rem.maps-complement}.
Since an invertible linear mapping transforms extreme rays to extreme rays, this gives us implicitly a criterion to recognize
some of the extreme rays in $\supmogame$.
\smallskip

From the point view of {\em conditional independence\/} interpretation of these games
(see \ref{sec.apex-CI}, Remark \ref{rem.CI-interpret}) the correspondence
$m\in\stagame\longleftrightarrow\bar{r}\in\komstagame$ given by
\begin{equation}
\begin{array}{rcl}
m(S) &=& -\bar{r}(S) + \sum_{i\in S}\, \bar{r}(\{ i\})\quad \mbox{for $S\subseteq N$},\\[0.8ex]
\bar{r}(T) &=& -m(T) + |T|\cdot m(N) -\sum_{i\in T}\, m(N\setminus \{ i\})\quad \mbox{for $T\subseteq N$},
\end{array}
\label{eq.r-to-m}
\end{equation}
seems to be natural because it has the property
$$
\forall\, A,B\subseteq N\quad
m(A\cup B)+m(A\cap B)-m(A)-m(B) = -\bar{r}(A\cup B)-\bar{r}(A\cap B)+\bar{r}(A)+\bar{r}(B)\,,
$$
for which reason the conditional independence structures given by $m$ and $\bar{r}$ coincide.
Note that \eqref{eq.r-to-m} is, in fact, the multiplication by $(-1)$ adapted to fit into the spaces
$\stagame$ and $\komstagame$. Other authors  prefer the correspondence
$m\in\stagame\longleftrightarrow \bar{r}^{*}\in\komstagame$ given by a simpler formula
\begin{equation}
\begin{array}{rcl}
m(S) &=& \bar{r}^{*}(N)-\bar{r}^{*}(N\setminus S) \quad \mbox{for $S\subseteq N$},\\[0.8ex]
\bar{r}^{*}(T) &=& m(N) -m(N\setminus T) \quad \mbox{for $T\subseteq N$},
\end{array}
\label{eq.m-to-dual-r}
\end{equation}
see the concept of a {\em dual submodular system} from \cite[p.\ 37]{fuj91}.
However, this correspondence does not preserve the conditional independence interpretation.

Clearly, the rank function $\bar{r}$ of a matroid satisfying \eqref{eq.u-stand} is transformed by \eqref{eq.r-to-m} to an integer-valued
standardized supermodular game $m$ satisfying
$$
m(N)- m(N\setminus\{ i\})\equiv\bar{r}(\{ i\})\leq 1\qquad
\mbox{for any $i\in N$}.
$$
The mapping \eqref{eq.m-to-dual-r} also transforms rank functions $\bar{r}^{*}$ satisfying \eqref{eq.u-stand}
to the same class of games. This is the class of games $m\in\supmogame$ to which the matroidal criterion is applicable.
An alternative characterization of this class of games is that
all vertices of the core $\cor (m)$ are zero-one vectors; see Corollary \ref{cor.zero-one-core} in \ref{sec.apex-supermod}.
Note that the case of convex measure games is not covered by the matroid case because
the integer-valued version $\tilde{m}$ of the game from Example \ref{exa.RW-73-74}
satisfies $\tilde{m}(N)-\tilde{m}(\{b,c,d\})=2$.

Given $m\in\supmogame$ with zero-one $\ext (\cor (m))$, one can apply the formula \eqref{eq.r-to-m}
to get the respective rank function $\bar{r}$ and determine ${\cal B}$ on basis of it.
Alternatively, the transformation \eqref{eq.m-to-dual-r} can be used instead.
We illustrate the procedure in the next example, which also shows that
the case of matroids is not covered by the convex measure game case.

\begin{example}\label{exa.nguy78}
Put $N=\{a,b,c,d\}$ and consider an integer-valued supermodular game
$$
m = 2\cdot\delta_{N} + \delta_{\{a,b,c\}} + \delta_{\{a,b,d\}} + \delta_{\{a,c,d\}} + \delta_{\{b,c,d\}} + \delta_{\{a,b\}}.
$$
One has $m(N)- m(N\setminus\{ i\})=1$ for any $i\in N$ and the corresponding rank function is
$$
\bar{r} = 2\cdot\delta_{N} + 2\cdot\sum_{i\in N} \delta_{N\setminus\{ i\}}
+ 2\cdot\sum_{\substack{S\subseteq N,\, |S|=2,\\ S\neq \{a,b\}}} \delta_{S} +\delta_{\{a,b\}}
+\sum_{i\in N} \delta_{\{ i\}},
$$
which means one has
$$
{\cal B} = \{\, S\subseteq N :\ |S|=2,\, S\neq \{a,b\}\,\} = \{\, \{a,c\}, \{a,d\}, \{b,c\}, \{b,d\} , \{c,d\}\,\}\,.
$$
Clearly, every solution to \eqref{eq.nguen} is constant in this case, which implies that $m$ is an extreme supermodular game
by Nguyen's criterion. Alternatively, the mapping \eqref{eq.m-to-dual-r} gives
$$
\bar{r}^{*} = 2\cdot\delta_{N} + 2\cdot\sum_{i\in N} \delta_{N\setminus\{ i\}}
+ 2\cdot\sum_{\substack{S\subseteq N,\, |S|=2,\\ S\neq \{c,d\}}} \delta_{S} +\delta_{\{c,d\}}
+\sum_{i\in N} \delta_{\{ i\}},
$$
which defines another class of bases, namely
$$
{\cal B}^{*} =  \{\, \{a,b\}, \{a,c\}, \{a,d\}, \{b,c\}, \{b,d\}\,\}\,.
$$

On the other hand, $m$ is not a convex measure game.
Indeed, assume for a contradiction $m=f\circ \mu$, where $\mu$ is a probability measure and
$f:[0,1]\to [0,2]$ convex with $f(0)=0$ and $f(1)=2$. Then, since $f$ cannot be constant on any sub-interval of
$f_{-1}(0,2]$, one has
$$ m(A)=f(\mu (A))=f(\mu (B))=m(B)>0 ~\Rightarrow~ \mu (A)=\mu (B)\quad
\mbox{for any $A,B\subseteq N$}.
$$
Thus, $\{a,b\}\subseteq \{a,b,c\}$ and $m(\{a,b\})=m(\{a,b,c\})>0$ implies $\mu (\{c\})=0$, which
gives $\mu (\{a,b,d\})=\mu (N)~\Rightarrow~ 1=m(\{a,b,d\})=m(N)=2$, a contradiction.

\begin{table}
$$
\bbordermatrix{
 ~~T\subseteq N  & \emptyset & \VR a & b & c & d \cr
 \{ a,b,c,d\}~~ & 2 & \VR 1 & 1 & 1 & 1 \cr
 ~~\{a,b\} & 1 & \VR 1 & 1 & 0 & 0 \cr
 ~~~~~\emptyset & 0 & \VR 0 & 0 &Â 0 & 0 \cr}
 \qquad\quad
 \begin{array}{l}
 {\cal Q}^{T}\\
 ac, ad, bc, bd, cd, abc, abd, acd, bcd, abcd\\
 a, b, ab, ac, ad, bc, bd, abc, abd\\
 \emptyset , a, b, c, d, ac, ad, bc, bd, cd\\
 ~~
\end{array}
$$
\caption{The max-representation and tightness set classes in Example \ref{exa.nguy78}.}\label{tab.3}
\end{table}

The canonical max-representation of $m$ from \S\,\ref{ssec.Rosen-recall} consists of three modular functions,
shown in Table~\ref{tab.3}, while there are five vertices of core $\cor (m)$,
namely the vectors $[x_{a},x_{b},x_{c},x_{d}]$ of the form
$$
[1,1,0,0],\ [1,0,1,0],\ [1,0,0,1],\ [0,1,1,0],\ [0,1,0,1]\,,~~
$$
which are the incidence vectors of sets in ${\cal B}^{*}$.
Thus, both the system of linear equation (x)-(z) from  \S\,\ref{ssec.Rosen-recall} and the system
(a)-(b) from \S\,\ref{ssec.main-formul} involve more variables than Nguyen's matroid-based criterion.
\end{example}

\begin{remark}\label{rem.maps-complement}\rm
This is to explain the relation of linear mappings given by \eqref{eq.r-to-m} and \eqref{eq.m-to-dual-r}.
Given $m\in\stagame$, the corresponding games $\bar{r}$ and $\bar{r}^{*}$ are in the following
relation
\begin{equation}
\bar{r}^{*}(T) = \bar{r}(N\setminus T) -\bar{r}(N) +\sum_{i\in T} \bar{r}(\{i\})\qquad  \mbox{for $T\subseteq N$},
\label{eq.r-dual}
\end{equation}
which defines an invertible linear mapping of $\komstagame$ onto itself. Since the inverse of
\eqref{eq.r-dual} on $\komstagame$ is itself, it can be viewed as a kind of duality transformation on $\komstagame$.
Moreover, it maps $\submogame$ onto itself and rank functions of matroids within $\komstagame$
to rank functions. Since $\bar{r}^{*}(\{i\})=\bar{r}(\{i\})$ for $i\in N$ the supports of rank functions $\bar{r}$ and
$\bar{r}^{*}$ coincide. The relation of matroids corresponding to $\bar{r}$ and
$\bar{r}^{*}$ is that their restrictions to the (shared) support $M\subseteq N$ are {\em dual matroids};
see \cite[chapter 2]{oxl92} for this concept. That means,
if $M\neq\emptyset$, the class ${\cal B}^{*}$ of bases given by $\bar{r}^{*}$ is
$$
{\cal B}^{*} ~=~ \{\, M\setminus B\,:\ B\in {\cal B}\,\},\qquad
\mbox{where ${\cal B}$ is the class of bases given by $\bar{r}$.}
$$
Since the condition \eqref{eq.nguen} for ${\cal B}$ can be re-written as
$$
\forall\, B,C\in {\cal B}\qquad  \sum_{i\in B\setminus C}\, y_{i}=\sum_{j\in C\setminus B}\, y_{j}\,,
$$
it clearly gives the same requirement as \eqref{eq.nguen} for ${\cal B}^{*}$. In particular, no matter whether
one decides to apply  Nguyen's criterion either to $\bar{r}$ or to $\bar{r}^{*}$ one gets the same
condition.

Analogously, given $\bar{r}\in\komstagame$ the corresponding games $m$ and $m^{*}$ in $\stagame$ determined
by \eqref{eq.r-to-m} and \eqref{eq.m-to-dual-r} are related by a duality relation on  $\stagame$ given by
$$
m^{*}(S) ~=~ m(N\setminus S) + (|S|-1)\cdot m(N) -\sum_{i\in S}\, m(N\setminus\{ i\})\qquad
\mbox{for $S\subseteq N$}.
$$
Of course, the mappings $m\mapsto m^{*}$ transforms $\supmogame$ onto itself. These are the reasons why we consider
\eqref{eq.r-to-m} and \eqref{eq.m-to-dual-r} to be complementary to each other.
\end{remark}

There is even closer relation of matroidal bases and the (vertices of the) respective core,
which allows us to re-formulate Nguyen's matroidal criterion as follows.

\begin{coro}\label{cor.Nguyen-trans}\rm
Let $m\in\supmogame$ be such that ${\cal X}=\ext(\cor (m))$ consists of zero-one vectors and
the support of $m$, denoted by $M$, is non-empty.
Then $m$ generates an extreme ray of $\supmogame$ iff every solution to the system of linear constraints on $y\in {\dv R}^{M}$ ~~
\begin{equation}
\forall\, v,w\in {\cal X}=\ext(\cor (m))\qquad 0=\sum_{i\in M}\, (v_{i}-w_{i})\cdot y_{i}
\label{eq.Nguyen-trans}
\end{equation}
is constant, that is, $\bar{y}_{i}=u$ for some $u\in {\dv R}$.
\end{coro}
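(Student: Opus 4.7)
By Theorem~\ref{thm.1}, $m$ generates an extreme ray of $\supmogame$ iff every solution $y\in {\dv R}^{\Gamma\times N}$ to the system (a)--(b) from \S\,\ref{ssec.main-formul} (for an array $x$ of the form \eqref{eq.gamma-array}) is a scalar multiple of $x$. Under the zero-one assumption the vertices of $\cor(m)$ can be written as $v^{\tau}=\incS_{B_{\tau}}$ with $B_{\tau}\subseteq N$ and $|B_{\tau}|=m(N)$, and the array reads $x(\tau,i)=v^{\tau}_{i}\in\{0,1\}$. The plan is to set up a linear bijection between the solution space $V_{1}$ of (a)--(b) and the solution space $V_{2}\subseteq {\dv R}^{M}$ of \eqref{eq.Nguyen-trans} that sends the line ${\dv R}\cdot x$ onto the line of constant vectors on $M$; the claimed equivalence then follows from Theorem~\ref{thm.1}.

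I start with three preliminary observations. First, standardized supermodular games are non-decreasing, so $v^{\tau}_{i}\geq 0$; together with $\sum_{i\in M}v^{\tau}_{i}\geq m(M)=m(N)=\sum_{i\in N}v^{\tau}_{i}$ this forces $B_{\tau}\subseteq M$ for every $\tau$, while Lemma~\ref{lem.1} and the minimality of the support $M$ give the reverse inclusion $M=\bigcup_{\tau}B_{\tau}$. Second, from \eqref{eq.min-gamma-array} one gets $m(N\setminus\{i\})=m(N)-\max_{\tau}v^{\tau}_{i}$, so $m(N)-m(N\setminus\{i\})=1$ for every $i\in M$; consequently $N\setminus\{i\}\in {\cal S}_{\tau}$ iff $v^{\tau}_{i}=1$, while $N\in {\cal S}_{\tau}$ always. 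Third, the transformation \eqref{eq.m-to-dual-r} associates to $m$ a matroid on $N$ whose bases are precisely the sets $B_{\tau}$, as discussed in \S\,\ref{ssec.matroid}.

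Now I define $\phi \colon V_{1}\to V_{2}$ by choosing, for $y\in V_{1}$ and $i\in M$, any $\tau$ with $v^{\tau}_{i}=1$ and setting $\bar{y}_{i}:=y(\tau,i)$. Well-definedness: if $v^{\tau}_{i}=v^{\pi}_{i}=1$, both $N$ and $N\setminus\{i\}$ lie in ${\cal S}_{\tau}\cap {\cal S}_{\pi}$ by the second observation, and subtracting the two equations from (b) gives $y(\tau,i)=y(\pi,i)$. Using (a) one has $\sum_{i}y(\tau,i)=\sum_{i\in B_{\tau}}\bar{y}_{i}$, whence (b) with $S=N$ translates exactly into \eqref{eq.Nguyen-trans}, i.e.\ $\bar{y}\in V_{2}$. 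The map $\phi$ is linear and injective (if $\bar{y}=0$ then $y(\tau,i)=0$ on $B_{\tau}$ by construction and on $N\setminus B_{\tau}$ by (a)), and sends $\alpha\cdot x$ to the vector constantly equal to $\alpha$ on $M$.

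The candidate inverse is $\iota(\bar{y})(\tau,i):=\bar{y}_{i}\cdot v^{\tau}_{i}$ for $i\in M$ and $\iota(\bar{y})(\tau,i):=0$ for $i\not\in M$. Condition (a) is immediate, and the main obstacle is verifying (b): for $S\in {\cal S}_{\tau}\cap {\cal S}_{\pi}$ one needs $\sum_{i\in S\cap B_{\tau}}\bar{y}_{i}=\sum_{i\in S\cap B_{\pi}}\bar{y}_{i}$. Here the matroid structure from the third observation is essential: $S\in {\cal S}_{\tau}\cap {\cal S}_{\pi}$ means $|B_{\tau}\cap S|=|B_{\pi}\cap S|=m(S)$, so $B_{\tau}\cap S$ and $B_{\pi}\cap S$ are bases of the restricted matroid $M|_{S}$; since a basis of $M|_{S}$ combined with a basis of the contraction $M/S$ is always a basis of $M$, the set $B':=(B_{\tau}\setminus S)\cup (B_{\pi}\cap S)$ is a basis and hence the support of some $v^{\rho}\in {\cal X}$. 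Applying \eqref{eq.Nguyen-trans} to the pair $(\tau,\rho)$ and cancelling $\sum_{i\in B_{\tau}\setminus S}\bar{y}_{i}$ from both sides delivers the required equality. A direct check shows that $\phi\circ\iota$ and $\iota\circ\phi$ are the identity, so $\phi$ is a linear isomorphism whose preimage of the line of constant vectors on $M$ is precisely ${\dv R}\cdot x$, and Theorem~\ref{thm.1} closes the argument.
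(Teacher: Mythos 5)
Your proposal is correct in its overall strategy and in its conclusion, but it takes a genuinely different route from the paper, and it contains one wrong intermediate claim that needs repair.

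On the route: the paper's proof is short and external --- it invokes Corollary \ref{cor.zero-one-core} to see that $\bar{r}^{*}$ from \eqref{eq.m-to-dual-r} is a matroid rank function, cites Wolsey \cite{wol98} to identify $\ext(\cor(m))$ with the incidence vectors of the matroid bases, and then simply observes that Nguyen's system \eqref{eq.nguen} becomes \eqref{eq.Nguyen-trans}; the equivalence with extremality is inherited wholesale from Nguyen's Theorem 2.1.5 \cite{ngu78}. You instead derive the corollary directly from Theorem \ref{thm.1} by exhibiting a linear isomorphism between the solution space of (a)--(b) and that of \eqref{eq.Nguyen-trans} which matches ${\dv R}\cdot x$ with the constant vectors. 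This is more work but buys something real: it makes the corollary independent of \cite{ngu78} and in effect re-proves the relevant part of Nguyen's criterion from the paper's main theorem, which is a cleaner logical dependency. The forward map $\phi$, its well-definedness via $N,N\setminus\{i\}\in{\cal S}_{\tau}\cap{\cal S}_{\pi}$, injectivity, and the translation of (b) at $S=N$ into \eqref{eq.Nguyen-trans} are all correct.

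The flaw is in the verification of (b) for $\iota(\bar{y})$. You assert that $S\in{\cal S}_{\tau}$ forces $B_{\tau}\cap S$ to be a basis of the restriction $M|_{S}$. It does not: $S\in{\cal S}_{\tau}$ means $|B_{\tau}\cap S|=m(S)=\min_{\pi}|B_{\pi}\cap S|=\bar{r}^{*}(N)-\bar{r}^{*}(N\setminus S)$, whereas a basis of $M|_{S}$ has cardinality $\bar{r}^{*}(S)=\max_{\pi}|B_{\pi}\cap S|$; these coincide only when $S$ is a separator. The correct statement is dual: $S\in{\cal S}_{\tau}$ iff $B_{\tau}\setminus S$ is a basis of the restriction $M|_{N\setminus S}$ (it is an independent subset of $N\setminus S$ of cardinality $m(N)-m(S)=\bar{r}^{*}(N\setminus S)$), and then $B_{\pi}\cap S$ is a basis of the contraction $M/(N\setminus S)$. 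Applying the exchange fact you quote --- a basis of $M|_{T}$ together with a basis of $M/T$ is a basis of $M$ --- with $T=N\setminus S$ (not $T=S$) yields that $(B_{\tau}\setminus S)\cup(B_{\pi}\cap S)$ is a basis, after which your cancellation argument goes through unchanged. With this correction the proof is complete.
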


\begin{proof}
By Corollary \ref{cor.zero-one-core}, $m$ is integer-valued and $m(N)-m(N\setminus\{ i\})\leq 1$ for any $i\in N$.
Re-write the definition of $\cor (m)$ in terms of the respective rank function $\bar{r}^{*}$
given by \eqref{eq.m-to-dual-r}:
$$
\cor (m) = \{\, [v_{i}]_{i\in N}\in {\dv R}^{N}\, : ~ \sum_{i\in N} v_{i}= \bar{r}^{*}(N) \,~\&~\,
\forall\, T\subseteq N ~\sum_{i\in T} v_{i}\leq \bar{r}^{*}(T)\,\}
$$
and apply Proposition 3.12(iii) in
\cite{wol98} to deduce that the vertices of $\cor (m)$ are just the incidence vectors of the bases
of the matroid given by $\bar{r}^{*}$. Note that the same observation was made in \cite[Proposition 2.5]{ABD11}
and \cite[Proposition 2.2.5]{dok11}.
Thus, \eqref{eq.nguen} turns into \eqref{eq.Nguyen-trans}.
\end{proof}

\begin{remark}\label{rem.Nguyen-trans}\rm
The above condition \eqref{eq.Nguyen-trans} cannot be
extended to an extremality criterion for general $m\in\supmogame$.
Put, for example, $N=\{ a,b,c,d\}$ and  $m=2\cdot\delta_{N} + \delta_{\{a,b,c\}} + \delta_{\{b,c,d\}}$.
Then the core $\cor (m)$ has seven vertices,  namely $[x_{a},x_{b},x_{c},x_{d}]$ of the form
$$
[1,1,0,0],\ [1,0,1,0],\ [1,0,0,1],\ [0,1,0,1],\
[0,0,1,1],\ [0,2,0,0],\ [0,0,2,0].
$$
Apparently the condition \eqref{eq.Nguyen-trans} is fulfilled in this case
despite $m$ is not extreme since $m= (\delta_{N} + \delta_{\{a,b,c\}})+(\delta_{N} + \delta_{\{b,c,d\}})$.
In fact, \eqref{eq.Nguyen-trans} geometrically means that $\cor (m)$
has the maximal attainable dimension $|M|-1$, where $M$ is the support of $m$. Indeed,
\eqref{eq.Nguyen-trans} means $y\in {\dv R}^{M}$ belongs to the orthogonal complement
of a~translated affine hull of $\cor (m)$.
\end{remark}

\begin{remark}\rm
The central concept in \cite{ngu78} is that of an {\em expansion\/} of an
integer-valued non-decreasing submodular game $r$ over $N$, which can be introduced as a rank function $r^{\prime}$ of
a matroid over $N^{\prime}$ such that there exists a function $\kappa:N^{\prime}\rightarrow N$ onto $N$
satisfying $r(S)=r^{\prime}(\kappa_{-1}(S))$ for any $S\subseteq N$.
Note that this is our simplified re-formulation of the definition and construction from \cite[\S\,1.3]{ngu78}.

Nguyen \cite{ngu78} shows that such a game $r$ has an expansion
$r^{\prime}$ with $|N^{\prime}|=\sum_{i\in N} r(\{ i\})$ and restricts his attention to such expansions.
Theorem 2.1.9 in \cite{ngu78} then gives an implicit ``criterion" to recognize whether an integer-valued non-decreasing submodular game
$r$ {\em is not extreme}, that is, whether it does not generate an extreme ray of the respective cone. The condition is that,
for some positive integer $k\in {\dv N}$, the multiple $k\cdot r$ has an expansion $r^{\prime}$ such that the matroid corresponding
to $r^{\prime}$ is not connected.

Despite testing of matroid connectivity is easy, the condition is not suitable as the criterion for testing
extremality of $r$. Although $|N^{\prime}|$ is fixed, there is no upper bound on the multiplicative factor
$k\in {\dv N}$, and one can hardly test an infinite number of potential rank functions $r^{\prime}$ on $N^{\prime}$
to confirm whether $r$ is extreme or not using this ``expansion criterion".
To be more specific note that it follows from the proof of \cite[Theorem~2.1.9]{ngu78} that
the above multiplicative factor $k$ is a common integer multiple of
denominators of rational coefficients in a potential non-trivial conic combination of non-decreasing submodular functions giving $r$.
\end{remark}

\subsection{Other results and summary}\label{ssec.kashi}

The 2000 paper by Kashiwabara \cite{kas00} has been inspired by Ngyuen \cite{ngu78} and
the cone of non-decreasing submodular games. A sufficient condition for extremality of an integer-valued game is offered in \cite{kas00}, which is more
general than the matroidal criterion.

Nevertheless, to tackle the extremality problem, (non-decreasing) submodular games are transformed
in \cite{kas00} by \eqref{eq.m-to-dual-r} to (non-decreasing) supermodular games and then the M\"{o}bius inversion
(see \ref{sec.apex-supermod}) is applied. The sufficient conditions
for extremality (of such an integer-valued game) are formulated in terms
of the  values of the M\"{o}bius inversion. These technical conditions from \cite[\S\,7]{kas00}
lead to the verification of certain combinatorial properties.

\begin{remark}\rm
The equivalent condition for extremality in \cite[Theorem 3.4]{kas00} seems to be analogous to the
extremality characterization in terms of conditional independence; see  \ref{sec.apex-CI},
Corollary \ref{cor.ext-ray-co-atom}. In our terms, Theorem 3.4 from \cite{kas00} says, for a non-negative
supermodular game $m$, that $m$ is extreme iff the only (non-negative standardized) game producing the
same or larger conditional independence model is a~multiple of $m$.
\end{remark}

Let us summarize the observations from \S\,\ref{sec.remark-other}.
Examples \ref{exa.RW-73-74} and \ref{exa.nguy78} show that the specific criteria discussed in \S\,\ref{ssec.conv-measure-game}
and \S\,\ref{ssec.matroid} differ from each other despite being completely analogous. They also differ from our new criterion
in \S\,\ref{ssec.main-formul} because the systems of linear constraints are different. Nevertheless,
the matroidal criterion can also be stated in term of the core as done in Corollary \ref{cor.Nguyen-trans}.
The specific criteria offered by Kashiwabara~\cite{kas00} are predominantly of combinatorial nature; they are not
formulated in terms of the core.

\section{Conclusions}\label{sec.concl}
The central topic of this paper was how to recognize whether $m\in\supmogame$ generates an extreme ray of $\supmogame$.
The reader familiar with polyhedral geometry may come up with the following suggestion.
Assuming one has at disposal the facet description of the cone, which
is our case, why not to try the following procedure. Consider a system of linear
constraints describing the smallest face $F(m)$ of the cone containing $m$ and check whether every
solution to that linear system has the form of a non-negative multiple of $m$ or not.
These constrains could be as follows: every facet containing $m$ corresponds to an equality constraint
while every other facet gives an inequality constraint.

The problem with this approach is that one has too many facets of $\supmogame$,
namely $\binom{n}{2}\cdot 2^{n-2}$, where $n=|N|$ (see \ref{sec.apex-CI}).
Thus, the complexity of such an extremality test is exponential in $n=|N|$.
On the other hand, provided one confirms our guess that $g\in\supmogame$ is a limit game
for $m$ (see Remark \ref{rem.Kuipers}) iff $g\in F(m)$, one can
perhaps utilize the main result from \cite{KVV10} to simplify the linear description of $F(m)$.

We would like to find out what is the complexity of testing extremality by means of our Theorem \ref{thm.1}.
We hope there is a chance that our linear system (a)-(b) results
in a~more efficient extremality test than the above mentioned approach.

The following open question is directly motivated by Example \ref{exa.Doker}:
is the condition from Theorem \ref{thm.1} necessary for an exact standardized game to be extreme in the respective cone?
Thus, one of our next research topics could be the cone of standardized exact games. We would like to
explicate its facet description and deal with criteria to recognize extreme exact games.
One can also study the core polytopes for extreme exact games and raise the question whether they are always indecomposable.

We consider the concept of core structure from \S\,\ref{ssec.interpret} to be of crucial significance.
One of our possible future research directions could be to search for combinatorial criteria
to test extremality of a supermodular game in terms of this concept.
However, even if such a~result is achieved, it would be just a preliminary step paving the way towards a more ambitious plan:
to achieve a {\em complete characterization\/} of extreme supermodular functions. By the complete characterization we mean
here an enumeration procedure such that, for any given $n=|N|$, the procedure generates every extreme ray of $\supmogame$.

\appendix
\section{Supermodularity}\label{sec.apex-supermod}

In this appendix, we collect various characterizations of supermodular
games appearing in the literature. The reader should be familiar with the notation and concepts from \S\,\ref{sec.prelimi}.
\smallskip

Let $\mu_{m}$ denote the {\em M\"{o}bius inversion\/} of a game $m$ over $N$, that is,
$$
\mu_{m}(A) ~:=~ \sum_{B\subseteq A}\, (-1)^{|A\setminus B|}\cdot m(B)\qquad \mbox{for every $A\subseteq N$.}
$$
The embedding of $\caP$ into ${\dv R}^{N}$ by means of the (incidence vector)
mapping $S\mapsto \incS_{S}$, for $S\subseteq N$, allows one to interpret any game $m$ over $N$ as a real function on $\{ 0,1\}^{N}$. Indeed, it suffices to put $\widehat{m}(\incS_{S}):= m(S)$ for any $S\subseteq N$. Below we describe a natural way of extending $\widehat{m}$ to all nonnegative vectors in ${\dv R}^{N}$;
we are going to denote this extension by $\widehat{m}$ as well because there is no danger of confusion in this paper.

The idea is that, for any $y\in [0,\infty )^{N}$, there is a unique chain\footnote{Here, by a {\em chain\/} is meant a class of sets
${\cal C}\subseteq\caP$ such that $\forall\, A,B\in {\cal C}$ either $A\subseteq B$ or $B\subseteq A$.}
${\cal C}^{y}$ of subsets of $N$ such that $N\in {\cal C}^{y}$, $\emptyset\not\in {\cal C}^{y}$ and there are unique
coefficients $\lambda_{N}\geq 0$ and $\lambda_{S}>0$ for $S\in {\cal C}^{y}\setminus\{ N\}$ such that
\begin{equation}
y=  \sum_{S\in {\cal C}^{y}}\, \lambda_{S}\cdot \incS_{S}\,.
\label{eq:canonical}
\end{equation}
The {\em Lov\'{a}sz extension $\widehat{m}: [0,\infty )^{N}\to {\dv R}$} (of $m$) is then defined linearly with respect to the decomposition \eqref{eq:canonical}, that is,
$$
\widehat{m}(y) :=  \sum_{S\in {\cal C}^{y}}\, \lambda_{S}\cdot \widehat{m}(\incS_{S}) \equiv \sum_{S\in {\cal C}^{y}}\, \lambda_{S}\cdot m(S)\qquad
\mbox{for every $y\in [0,\infty )^{N}$}\,.
$$
Some researchers also call $\widehat{m}(y)$ the ({\em discrete}) {\em Choquet integral\/} of $y$ with
respect to $m$ \cite{GrabischHandbook}. Note that, for any maximal chain ${\cal C}_{\pi}$, $\pi\in\Upsilon$ as introduced in \eqref{eq.max-chain},
$\widehat{m}$ is linear on the cone spanned by the vectors $\incS_{S}$, $S\in {\cal C}_{\pi}$. Indeed, realize that the vectors $\incS_{S}$,
$S\in {\cal C}_{\pi}\setminus\{\emptyset\}$ are linearly independent and $y$ in the
cone has a unique decomposition
$y=\sum_{S\in {\cal C}_{\pi}\setminus\{\emptyset\}} \lambda_{S}\cdot\incS_{S}$
(with $\lambda_{S}\geq 0$). Dropping some zero coefficients then leads to~\eqref{eq:canonical}.
\smallskip

This implies that the following properties are true.
\begin{itemize}
\item The function $\widehat{m}$ is continuous and piecewise linear on $[0,\infty )^{N}$.
\item $\widehat{m}(\lambda\cdot y)=\lambda\cdot \widehat{m}(y)$\quad
for every $\lambda \geq 0$ and $y\in[0,\infty)^{N}$.
\item $\widehat{m_{1}+m_{2}}=\widehat{m_{1}}+\widehat{m_{2}}$\quad for every pair of
games $m_{1},m_{2}$ over $N$.
\item $\widehat{u\cdot m}=u\cdot\widehat{m}$\quad for every game $m$ over $N$ and every $u\in {\dv R}$.
\end{itemize}
For a real number $u\in {\dv R}$, we abbreviate $u^{+}:=\max\,\{u,0\}$ and $u^{-}:=\max\,\{-u,0\}$.
The {\em upper core of $m$} (see \cite[\S\,4]{DK00}) is the polytope
$$
\cor^{+}(m) ~:=~ \bigoplus_{\substack{S\subseteq N\\ \mu_{m}(S)>0}}  \mu_{m}(S)^{+} \cdot\Delta_{S}\,,
$$
where $\bigoplus$ denotes the multiple Minkowski sum. Analogously, the {\em lower core of $m$} is $$
\cor^{-}(m) ~:=~ \bigoplus_{\substack{S\subseteq N\\ \mu_{m}(S)<0}}  \mu_{m}(S)^{-} \cdot\Delta_{S}.
$$

\begin{thm}\label{thm:super}
Given a game $m$ over $N$, the following conditions are equivalent.
\begin{enumerate}
\renewcommand{\theenumi}{\roman{enumi}}
\renewcommand{\labelenumi}{(\theenumi)}
\item\label{i:supermod} $m$ is supermodular.
\item\label{i:convex-max} For every $A\subseteq B\subseteq N$ and every $C\subseteq N\setminus B$, one has
$$
m(A\cup C)-m(A)\leq m(B\cup C)-m(B).
$$
\item\label{i:convex-mid} For every $A\subseteq B\subseteq N$ and every $i\in N\setminus B$, one has
$$
m(A\cup \{i\})-m(A)\leq m(B\cup \{i\})-m(B).
$$
\item\label{i:convex-min} For every $i,j\in N$ with $i\neq j$ and every $A\subseteq N\setminus\{i,j\}$, one has
$$
m(A\cup\{i\})-m(A)\leq m(A\cup \{i,j\})-m(A\cup\{j\}).
$$
\item\label{i:Moeb} The M\"{o}bius inversion $\mu_{m}$ satisfies
$$
\forall\, i,j\in N,\ i\neq j\quad \forall A\subseteq N\setminus\{ i,j\}\qquad \sum_{B\subseteq A} \mu_{m}(B\cup\{i,j\}) \geq 0.
$$

\item\label{i:Moeb2} The M\"{o}bius inversion $\mu_{m}$ satisfies for
every $A,B\subseteq N$~
$$
\sum_{D\in {\cal D}}\, \mu_{m}(D) \geq 0\quad
\mbox{where~ ${\cal D}=\{\, D\subseteq (A\cup B)\,:\ D\setminus B\neq\emptyset ~~\&~~
D\setminus A\neq\emptyset\,\}$}.
$$

\item\label{i:margcore} For each $\pi\in\Upsilon$, one has $x^{m}(\pi,\ast)\in \cor(m)$; in other words,
all marginal vectors of $m$ belong to the core of $m$.
\item\label{i:facelat} $\cor (m)\neq\emptyset$ and, for every $S,T\subseteq N$,
$$
F_{S}(m) \cap F_{T}(m)  \subseteq F_{S\cup T}(m) \cap F_{S\cap T}(m),
$$
where $F_{S}(m):=\{\, v\in\cor(m)\,:\ \sum_{i\in S} v_{i}=m(S)\,\}$
is the face of $\cor (m)$ for $S\subseteq N$.
\item\label{i:tesnoset}  $\cor (m)\neq\emptyset$ and, for every $v\in \cor(m)$,
the class of tightness sets
$$
{\cal S}^{m}_{v} ~:=~ \{\, S\subseteq N\,:\ \sum_{i\in S} v_{i}=m(S)\,\}
$$
is closed under the operations of intersection and union.\footnote{Another formulation of \eqref{i:tesnoset}
is that ${\cal S}^{m}_{v}$ is a lattice relative to $\cap$ and $\cup$, for any $v\in\cor (m)$.}
\item\label{i:W=C} $\cor(m)=\web (m)$.
\item\label{i:perm-min}
For every $S\subseteq N$, one has $m(S)=\min\limits_{\tau\in\Upsilon}\, \sum_{i\in S}\, x^{m}(\tau ,i)$\,.
\item\label{i:web-min} For every $S\subseteq N$, one has
$m(S)=\min\limits_{v\in \web (m)}\, \sum_{i\in S} v_{i}$\,.
\item\label{i:conc} The Lov\'{a}sz extension $\widehat{m}$ of $m$ is a concave real function on $[0,\infty )^{N}$.
\item\label{i:lsf} For every $y\in [0,\infty )^{N}$ one has\,
$\widehat{m}(y)= \min\limits_{v\in\web (m)}\, \sum_{j\in N} v(j)\cdot y(j)$.
\item\label{i:Mink} $\cor^{+}(m)=\cor(m)\oplus \cor^{-}(m)$.
\end{enumerate}
\end{thm}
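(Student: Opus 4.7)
I would establish a network of implications with condition~(i) as the hub. The other conditions split naturally into five thematic clusters: the elementary forms (ii)--(iv); the M\"obius-inversion forms (v)--(vi); the core/Weber-set forms (vii), (x)--(xii); the Lov\'asz-extension forms (xiii)--(xiv); the face-lattice forms (viii)--(ix); together with the Minkowski-sum identity (xv). Inside each cluster the equivalences are short; the work lies in bridging each cluster to~(i) in both directions.

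\textbf{Elementary and M\"obius clusters.} For (ii)--(iv) the classical induction arguments apply: (i) $\Rightarrow$ (iv) is a specialisation; (iv) $\Rightarrow$ (iii) follows by induction on $|B\setminus A|$, enlarging $A$ one element at a time; (iii) $\Rightarrow$ (ii) follows by induction on $|C|$; and (ii) $\Rightarrow$ (i) results from the substitution $A:=X\cap Y$, $B:=X$, $C:=Y\setminus X$. For (v)--(vi) the key algebraic identity is
\[
m(A\cup B)+m(A\cap B)-m(A)-m(B) \;=\; \sum_{D\in{\cal D}}\mu_m(D),
\]
obtained by expanding each $m(\ast)$ via $m(S)=\sum_{T\subseteq S}\mu_m(T)$ and cancelling repeated terms; this yields (i) $\Leftrightarrow$ (vi). Since (v) is the specialisation of (vi) to $A=E\cup\{i\}$, $B=E\cup\{j\}$, (vi) $\Rightarrow$ (v) is immediate, and a combinatorial double counting that decomposes each $D\in{\cal D}$ through the pairs $\{i,j\}\subseteq D$ gives (v) $\Rightarrow$ (vi).

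\textbf{Marginal vectors, Weber set, and Lov\'asz extension.} Using (iv), I would first show $\sum_{i\in S}x^m(\pi,i)\geq m(S)$ for arbitrary $S$ and $\pi$, by induction on the symmetric difference between $S$ and the initial segment of $\pi$ of cardinality $|S|$; the chain identity (\ref{eq.chain}) handles the base case. This yields (vii). Combined with the classical $\cor(m)\subseteq\web(m)$, (vii) forces $\cor(m)=\web(m)$, which is (x); then (xii) is the definition of the core applied to $\web(m)$, and (xi) follows since the minimum of a linear functional over $\web(m)=\conv\{x^m(\tau,\ast)\}$ is attained at a vertex. I would close the loop by showing (xi) $\Rightarrow$ (i): given $A,B$, pick $\tau\in\Upsilon$ whose maximal chain ${\cal C}_\tau$ contains both $A\cap B$ and $A\cup B$; then (\ref{eq.chain}) gives $\sum_{A\cap B}x^m(\tau,i)=m(A\cap B)$ and $\sum_{A\cup B}x^m(\tau,i)=m(A\cup B)$, while (xi) gives $\sum_Ax^m(\tau,i)\geq m(A)$ and $\sum_Bx^m(\tau,i)\geq m(B)$; summing the latter two and invoking the trivial identity $\sum_A+\sum_B=\sum_{A\cup B}+\sum_{A\cap B}$ yields supermodularity. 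For the Lov\'asz extension, $\widehat m$ is linear on each chain simplex $\nabla_\pi$, so its concavity on $[0,\infty)^N$ reduces to two-dimensional concavity across a common edge between adjacent chain simplices; this in turn is exactly condition~(iv), giving (i) $\Leftrightarrow$ (xiii). Condition (xiv) follows from (xiii) because a concave piecewise-linear function equals the infimum of its affine supporting hyperplanes, which are precisely $y\mapsto\sum_j x^m(\tau,j)y_j$ for $\tau\in\Upsilon$; conversely (xiv) is a pointwise minimum of affine functions, hence concave.

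\textbf{Face lattice, Minkowski sum, and main obstacles.} Since $v\in F_S(m)\cap F_T(m)$ is equivalent to $\{S,T\}\subseteq{\cal S}^m_v$, conditions (viii) and (ix) are reformulations of one another. For (i) $\Rightarrow$ (ix) I would use a slackness argument: for $v\in\cor(m)$ with $S,T\in{\cal S}^m_v$, one has $m(S)+m(T)=\sum_S v_i+\sum_T v_i=\sum_{S\cup T}v_i+\sum_{S\cap T}v_i\geq m(S\cup T)+m(S\cap T)\geq m(S)+m(T)$ by (i), forcing equality throughout and hence $S\cup T, S\cap T\in{\cal S}^m_v$. The converse (ix) $\Rightarrow$ (i) I would reduce to (vii): applied to a vertex of $\cor(m)$ whose tightness class contains a suitable maximal chain, (ix) forces each marginal vector into $\cor(m)$ by lattice closure, and then (vii) entails (i) via the chain already established. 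For (xv), I would compare support functions: since $h_{\cor^+(m)}(\ell)-h_{\cor^-(m)}(\ell)=\sum_S\mu_m(S)\max_{i\in S}\ell_i$, the identity $\cor^+(m)=\cor(m)\oplus\cor^-(m)$ amounts to $h_{\cor(m)}(\ell)=\sum_S\mu_m(S)\max_{i\in S}\ell_i$, which is the concave Choquet-integral formula for $h_{\cor(m)}$ holding precisely under (i) (equivalently, under (xiv)). The main obstacles I foresee are the implication (ix) $\Rightarrow$ (i), which requires a careful extraction of a vertex of $\cor(m)$ with a tightness class rich enough to apply lattice closure to arbitrary pairs $(A,B)$, and the support-function calculation for (xv), which needs a bookkeeping argument reconciling signed M\"obius coefficients with the concave Choquet integral derived from (xiv).
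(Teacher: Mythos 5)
Your proposal is correct in outline but takes a genuinely different, self-contained route: the paper's own proof of Theorem~\ref{thm:super} is essentially an assembly of citations --- Ichiishi \cite{ich81} for \eqref{i:supermod}$\Leftrightarrow$\eqref{i:convex-max}$\Leftrightarrow$\eqref{i:convex-mid}$\Leftrightarrow$\eqref{i:margcore}, Kuipers et al.\ \cite{KVV10} for \eqref{i:convex-min} and \eqref{i:Moeb}, Chateauneuf--Jaffray \cite{CJ89} for \eqref{i:Moeb2}, Shapley \cite{sha72} for \eqref{i:facelat}, \eqref{i:tesnoset} and \eqref{i:W=C}, Lov\'asz \cite{lov83} for \eqref{i:conc}, and Danilov--Koshevoy \cite{DK00} for \eqref{i:lsf} and \eqref{i:Mink} --- and the only implications it writes out in full are \eqref{i:margcore}$\Leftrightarrow$\eqref{i:perm-min}$\Leftrightarrow$\eqref{i:web-min}, where your argument (chain identity \eqref{eq.chain} in one direction, specialization $S=N$ in the other, plus your clean \eqref{i:perm-min}$\Rightarrow$\eqref{i:supermod} via a chain through $A\cap B$ and $A\cup B$) matches the paper's almost verbatim. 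What your approach buys is a proof readable without five external sources; what it costs is that you must actually carry out the arguments the paper outsources, and two of these are where your sketch is thinnest. First, \eqref{i:tesnoset}$\Rightarrow$\eqref{i:supermod}: your plan to show that lattice closure of the tightness classes forces every marginal vector into $\cor(m)$ is exactly Shapley's Theorem~3, but it needs the full greedy/uncrossing argument (optimize a weight vector compatible with $\pi$ over $\cor(m)$, show via an exchange argument that for each $k$ some tight set separates $\pi(k)$ from $\pi(k+1)$, then use $\cap/\cup$-closure to build the chain ${\cal C}_\pi$ inside ${\cal S}^m_v$); you correctly flag this as the main obstacle but do not yet supply it. Second, for \eqref{i:Mink} your support-function reduction to $h_{\cor(m)}(\ell)=\sum_S\mu_m(S)\max_{i\in S}\ell_i$ is the right identity, and the forward direction follows from \eqref{i:lsf}, but the converse (that this formula for all $\ell$ forces supermodularity) is asserted rather than argued --- note that restricting to $\ell=-\incS_S$ only yields exactness, so you genuinely need the identity for general $\ell$, e.g.\ by recovering concavity of $\widehat m$ from it. A minor further remark: your \eqref{i:Moeb}$\Rightarrow$\eqref{i:Moeb2} ``double counting over pairs $\{i,j\}\subseteq D$'' is awkward because a set $D$ with $D\setminus A\neq\emptyset\neq D\setminus B$ may contain many such pairs; the cleaner closure is \eqref{i:Moeb}$\Leftrightarrow$\eqref{i:convex-min} via the identity $\Delta m(i,j|A)=\sum_{B\subseteq A}\mu_m(B\cup\{i,j\})$, and then \eqref{i:Moeb2} through \eqref{i:supermod} by your own expansion of $m(A\cup B)+m(A\cap B)-m(A)-m(B)$.
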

\noindent
Observe that the crucial Lemma \ref{lem.1} now follows from Theorem \ref{thm:super}, which says \eqref{i:supermod} $\Leftrightarrow$ \eqref{i:perm-min}\/
and, moreover, \eqref{i:supermod} $\Rightarrow$ \eqref{i:W=C}.

\begin{proof}
The equivalence \eqref{i:supermod} $\Leftrightarrow$ \eqref{i:convex-max} $\Leftrightarrow$ \eqref{i:convex-mid} $\Leftrightarrow$ \eqref{i:margcore}
was shown in 1981 note by Ichiishi \cite{ich81}; for \eqref{i:margcore} $\Leftrightarrow$ \eqref{i:convex-min} see  \cite[Corollary~8]{KVV10};
\eqref{i:supermod} $\Leftrightarrow$ \eqref{i:Moeb} was shown in \cite[Theorem~9]{KVV10}; \eqref{i:supermod} $\Leftrightarrow$ \eqref{i:Moeb2} appeared in \cite[Corollary~2]{CJ89}.
Hence, \eqref{i:supermod} $\Leftrightarrow\ldots\Leftrightarrow$ \eqref{i:margcore}.

Theorem 5 in 1972 paper by Shapley \cite{sha72} implies \eqref{i:supermod} $\Rightarrow$ \eqref{i:facelat}. Clearly, \eqref{i:tesnoset} is an equivalent formulation of \eqref{i:facelat}. Theorem 3 in \cite{sha72} says that \eqref{i:facelat}\/ implies that the extreme points of $\cor (m)$
are precisely the marginal vectors of $m$; in particular, \eqref{i:tesnoset} $\Leftrightarrow$ \eqref{i:facelat} $\Rightarrow$ \eqref{i:W=C}.
The implication \eqref{i:W=C} $\Rightarrow$ \eqref{i:margcore} is evident.
Hence, \eqref{i:supermod}~$\Leftrightarrow\ldots\Leftrightarrow$~\eqref{i:W=C}.

To show \eqref{i:margcore} $\Rightarrow$ \eqref{i:perm-min} consider fixed $S\subseteq N$ and
the fact $x^{m}(\tau ,\ast)\in\cor (m)$  for any $\tau\in\Upsilon$ gives
$\sum_{i\in S}\, x^{m}(\tau ,i)\geq m(S)$ implying
$\min_{\tau\in\Upsilon}\, \sum_{i\in S}\, x^{m}(\tau ,i)\geq m(S)$. Conversely,
there exists $\pi\in\Upsilon$ with $S\in {\cal C}_{\pi}$ and, by \eqref{eq.chain},
$m(S)=\sum_{i\in S}\, x^{m}(\pi ,i)\geq \min_{\tau\in\Upsilon}\, \sum_{i\in S}\, x^{m}(\tau ,i)$.
In order to show \eqref{i:perm-min} $\Rightarrow$ \eqref{i:margcore},
consider fixed $\pi\in\Upsilon$ and the marginal vector $x^{m}(\pi ,\ast)$. Write for $S\subseteq N$,
$$
\sum_{i\in S}\, x^{m}(\pi ,i) ~\geq ~\min\limits_{\tau\in\Upsilon}\, \sum_{i\in S}\, x^{m}(\tau ,i) = m(S)\,.
$$
For $S=N$ one has $\sum_{i\in N}\, x^{m}(\pi ,i)=m(N)$ directly from \eqref{eq.payoff-array}.
Thus, $x^{m}(\pi ,\ast)\in \cor(m)$.
By the definition of $\web (m)$, \eqref{i:perm-min} $\Leftrightarrow$ \eqref{i:web-min}.
Hence, \eqref{i:supermod} $\Leftrightarrow\ldots\Leftrightarrow$ \eqref{i:web-min}.

The equivalence \eqref{i:supermod} $\Leftrightarrow$ \eqref{i:conc} is the ``supermodular version''
of the theorem originally proved by Lov\'{a}sz for submodular functions \cite[\S\,4]{lov83}; see also \cite[Theorem~6.13]{fuj91}. The equivalences \eqref{i:supermod} $\Leftrightarrow$ \eqref{i:lsf} $\Leftrightarrow$ \eqref{i:Mink} were shown
by Danilov and Koshevoy \cite[\S\,5]{DK00}. Hence, we can conclude that
\eqref{i:supermod} $\Leftrightarrow\ldots\Leftrightarrow$ \eqref{i:Mink}.
\end{proof}

\begin{remark}\rm\label{rem:coopconvex}
In cooperative game theory, \eqref{i:supermod} $\Leftrightarrow$ \eqref{i:convex-mid} is interpreted that a function
$m$ is supermodular iff the marginal contribution of a player to a coalition is monotone non-decreasing
with respect to set-theoretical inclusion. This explains the term ``convex'' used to name supermodular functions
in game theory in the analogy with one of equivalent characterizations of convexity of a real function.
Observe that this established terminology contrasts with the concavity of the Lov\'{a}sz extension;
see also our discussion in \S\,\ref{Rosen-compare}, in particular, Remark~\ref{rem.RW-misuse}.
\end{remark}

The following observation also follows from Theorem \ref{thm:super}.

\begin{coro}\rm\label{cor.zero-one-core}
Assuming $m\in\supmogame$,
all the vertices of the core $\cor (m)$ are zero-one vectors
iff $m$ is integer-valued and $m(N)-m(N\setminus\{ i\})\leq 1$ for any $i\in N$.
\end{coro}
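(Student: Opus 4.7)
The plan is to use the characterization of vertices of $\cor(m)$ as marginal vectors (Corollary \ref{cor.vert-core-supermod}) and combine this with the convexity-of-marginal-contributions form of supermodularity (Theorem \ref{thm:super}\eqref{i:convex-mid}).

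For the ``only if'' direction, assume every vertex of $\cor(m)$ is a zero-one vector. By Corollary \ref{cor.vert-core-supermod}, the set of vertices coincides with the set of distinct rows of the payoff array \eqref{eq.payoff-array}, so every marginal vector $x^{m}(\tau,\ast)$, $\tau\in\Upsilon$, has all its entries in $\{0,1\}$. Pick any $i\in N$ and any enumeration $\tau\in\Upsilon$ with $\tau(n)=i$; then by \eqref{eq.payoff-array}, $m(N)-m(N\setminus\{i\})=x^{m}(\tau,i)\in\{0,1\}$, giving the required inequality. Integrality of $m$ then follows from Lemma \ref{lem.1}, because for every $S\subseteq N$ the value $m(S)=\min_{\tau\in\Upsilon}\sum_{i\in S} x^{m}(\tau,i)$ is a minimum of integers.

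For the ``if'' direction, suppose $m$ is integer-valued with $m(N)-m(N\setminus\{i\})\leq 1$ for every $i\in N$. Fix $\tau\in\Upsilon$ and $i\in N$, and set $A:=\bigcup_{k<\pi_{-1}(i)}\{\tau(k)\}\subseteq N\setminus\{i\}$, so that $x^{m}(\tau,i)=m(A\cup\{i\})-m(A)$ by \eqref{eq.payoff-array}. Applying Theorem \ref{thm:super}\eqref{i:convex-mid} with $B:=N\setminus\{i\}$ yields
\[
m(A\cup\{i\})-m(A)\ \leq\ m(N)-m(N\setminus\{i\})\ \leq\ 1.
\]
On the other hand, since $m\in\supmogame$ is standardized and supermodular, $m$ is non-decreasing with respect to inclusion (as noted in \S\,\ref{ssec.main-formul}), so $m(A\cup\{i\})-m(A)\geq 0$. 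Integrality of $m$ forces $x^{m}(\tau,i)\in\{0,1\}$. Since $\tau$ and $i$ were arbitrary, every row of the payoff array is a zero-one vector, and by Corollary \ref{cor.vert-core-supermod} every vertex of $\cor(m)$ is a zero-one vector.

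There is no real obstacle here: the argument is a direct application of the two structural results already established, namely Corollary \ref{cor.vert-core-supermod} (identifying vertices with marginal vectors) and Theorem \ref{thm:super}\eqref{i:convex-mid} (monotonicity of marginal contributions), together with the observation that standardized supermodular games are non-decreasing.
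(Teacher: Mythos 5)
Your proof is correct and follows essentially the same route as the paper's: both directions rest on identifying the vertices of $\cor(m)$ with the rows of the payoff array \eqref{eq.payoff-array}, reading off $m(N)-m(N\setminus\{i\})$ as a marginal-vector entry, and getting integrality of the entries from the integrality of $m$ (and conversely via the min-representation). The only cosmetic differences are that the paper derives the bound $0\leq v_{i}\leq 1$ for core elements directly from the defining inequalities \eqref{eq.def.core} instead of from Theorem \ref{thm:super}\eqref{i:convex-mid} applied to the marginal vectors, and that your definition of $A$ contains a typo ($\pi_{-1}(i)$ should read $\tau_{-1}(i)$).
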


\begin{proof}
For necessity use Theorem \ref{thm:super}, conditions \eqref{i:web-min}
and \eqref{i:W=C}, to derive that, for any $S\subseteq N$, $m(S)=\min_{v\in {\cal X}}\, \sum_{i\in S} v_{i}$,
where ${\cal X}=\ext (\cor (m))$. Since ${\cal X}\subseteq \{ 0,1\}^{N}$, $m$ is integer-valued.
For any $i\in N$, it also implies that $v\in {\cal X}$ with
$m(N\setminus\{ i\})=\sum_{j\in N\setminus\{ i\}} v_{j}$ exists. Thus,
$m(N)-m(N\setminus\{ i\})=\sum_{j\in N} v_{j} - \sum_{j\in N\setminus\{ i\}} v_{j} = v_{i}\leq 1$.

For sufficiency use Theorem \ref{thm:super}, condition \eqref{i:W=C},
and the definition of $\web (m)$, to observe that every vertex of $\cor (m)$ is a row in
\eqref{eq.payoff-array}, and, therefore, has integers as components.
For every $v\in\cor (m)$ and $i\in N$, by \eqref{eq.def.core},
$v_{i}=\sum_{j\in N} v_{j} - \sum_{j\in N\setminus\{ i\}} v_{j}\leq m(N)-m(N\setminus\{ i\})\leq 1$
and $v_{i}\geq m(\{ i\})=0$ implying $\cor (m)\subseteq [0,1]^{N}$. Altogether,
$\ext (\cor (m))\subseteq \{ 0,1\}^{N}$.
\end{proof}

\section{Conditional independence (CI) interpretation}\label{sec.apex-CI}
Given a supermodular set function $m$ over $N$ and pairwise disjoint subsets $X,Y,Z\subseteq N$,
we say that $X$ is {\em conditionally independent of\/ $Y$ given $Z$ with respect to $m$} and write
\begin{equation}
X\ci Y\,|\,Z\,\,[m]\qquad \mbox{iff}\quad m(X\cup Y\cup Z) +m(Z) -m(X\cup Z) - m(Y\cup Z)=0\,.
\label{eq.def-CI}
\end{equation}
The statement $X\ci Y\,|\,Z\,\,[m]$ is then called the {\em conditional independence} (CI) {\em statement}.
The {\em CI model produced by $m$} then consists of valid CI statements with respect to $m$.
The collection of {\em structural independence models} over $N$, introduced in \cite[\S\,5.4.2]{stu05},
can equivalently be defined as the class of CI models produced by supermodular set functions over $N$. This collection is a finite lattice whose order is given by the set-theoretic inclusion
between classes of represented CI statements.

\begin{remark}\rm\label{rem.why-variable}
The concept of a structural independence model
generalizes the concept of a probabilistic CI structure; see \cite[\S\,5.1.1]{stu05} for detailed explanation.
In the context of probabilistic CI, the elements of $N$ correspond to {\em random variables}, usually finite-valued ones.
The probabilistic CI statement $X\ci Y\,|\,Z$ then means that the (set of random) variables in $X$ is
stochastically independent of the variables in $Y$ conditionally on (the values of) the variables in $Z$.
This interpretation was our motivation to call the elements of $N$ {\em variables} in this paper.
\end{remark}

Let $\eletri$ denote the class of all triplets $\langle a,b|Z\rangle$,
where $a,b\in N$ are distinct and $Z\subseteq N\setminus\{a,b\}$. For each such triplet and function $m\in {\dv R}^{\caP}$, put
$$
\Delta m(a,b|Z) ~:=~ m(\{ a,b\}\cup Z)+m(Z)-m(\{ a\}\cup Z)-m(\{ b\}\cup Z).
$$
By Theorem \ref{thm:super}\eqref{i:convex-min}, the expression $\Delta m(a,b|Z)$
is always non-negative for a supermodular function $m$. In~fact, one even has
$m(X\cup Y\cup Z) +m(Z) -m(X\cup Z) - m(Y\cup Z)\geq 0$
for any triplet $X,Y,Z$ of pairwise disjoint subsets of $N$.
Hence, any structural independence model is a \emph{semi-graphoid}, which
is a concept proposed by Pearl \cite{pea98}:
for any pairwise disjoint $X,Y,Z,W\subseteq N$, one has $\emptyset\ci Y\,|\,Z$ and
\begin{eqnarray*}
X\ci Y\,|\,Z &\Leftrightarrow & Y\ci X\,|\,Z\,,\\
X\ci Y\cup W\,|\,Z &\Leftrightarrow & \{\, X\ci Y\,|\,Z\cup W ~~\&~~
X\ci W\,|\,Z \,\}\,.
\end{eqnarray*}
This implies that every structural model is determined by its {\em elementary CI statements},
which are statements of the form $\{a\}\ci \{b\}\,|\,Z$ where $a,b\in N$, $a\neq b$ and
$Z\subseteq N\setminus\{a,b\}$. By Theorem \ref{thm:super}\eqref{i:convex-min}, the class of supermodular games $\supermo$
is a (rational) polyhedral cone in ${\dv R}^{\caP}$ characterized by $\binom{n}{2}\cdot 2^{n-2}$ inequalities as follows:
$$
m\in\supermo ~\Leftrightarrow ~
\left[\,\forall\,\langle a,b|Z\rangle\in\eletri \quad \Delta m(a,b|Z)\geq 0 \,\right]\quad
\mbox{for $m\in {\dv R}^{\caP}$ with $m(\emptyset)=0$.}
$$
A well-known fact is that the inequalities above are exactly the facet-defining inequalities for $\supermo$
and its standardized version $\supmogame$; see, for example,  \cite[Corollary 11]{KVV10} or one can derive that from \cite[Lemma 2.1]{KSTT12}.

We will say that functions $m^{1},m^{2}\in\supermo$ are {\em qualitatively equivalent} (see \cite[\S\,5.1.1]{stu05})
and write $m^{1}\sim m^{2}$ if they produce the same CI model. It follows from the
semi-graphoid properties mentioned above
that $m^{1}\sim m^{2}$ iff\/ ${\cal I}(m^{1})={\cal I}(m^{2})$, where
$$
{\cal I}(m) ~:=~ \{\, \langle a,b|Z\rangle\in\eletri \,:\ \Delta m(a,b|Z)=0\,\}.
$$
Let's denote by $\facelatt$ the lattice of {\em non-empty faces\/} of\/ $\supermo$
ordered by inclusion $\subseteq$, which is, of course,
isomorphic to the lattice of non-empty faces of $\supmogame$.
For any $m\in\supermo$, $F(m)$ will denote the smallest face containing $m$:
$$
F(m) := \bigcap\, \{ F :\ F\in\facelatt ~\& ~ m\in F\}\,.
$$
The following lemma implies that the face lattice $\facelatt$  of $\supermo$ is anti-isomorphic to the lattice of structural models.

\begin{lem}\rm\label{lem.face}
~~$\forall\,  m^{1},m^{2}\in\supermo\qquad
F(m^{1})\subseteq F(m^{2}) ~\Leftrightarrow~ {\cal I}(m^{1})\supseteq {\cal I}(m^{2})$\,.
\end{lem}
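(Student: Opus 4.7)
The plan is to reduce the claim to the standard face-lattice correspondence for polyhedra, applied to the cone $\supermo$ viewed inside the affine hyperplane $\{m\in{\dv R}^{\caP}\,:\ m(\emptyset)=0\}$. The excerpt has already recorded that $\supermo$ is cut out of this hyperplane by the inequalities $\Delta m(a,b|Z)\geq 0$ indexed by $\langle a,b|Z\rangle\in\eletri$, and — crucially — that these are \emph{precisely} the facet-defining inequalities of the cone; this is the only external input I shall need.

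The central step is to establish, for any $m\in\supermo$, the identity
\begin{equation*}
F(m)\,=\,\{\,m'\in\supermo\,:\ \Delta m'(a,b|Z)=0 \text{ for every } \langle a,b|Z\rangle\in {\cal I}(m)\,\}\,.
\end{equation*}
To obtain this, I would use that every facet of $\supermo$ has the form $H_{\langle a,b|Z\rangle}:=\{m'\in\supermo\,:\ \Delta m'(a,b|Z)=0\}$, and that $m\in H_{\langle a,b|Z\rangle}$ holds precisely when $\langle a,b|Z\rangle\in {\cal I}(m)$. Since the smallest face through $m$ is the intersection of the facets containing $m$, the right-hand set above is exactly $F(m)$.

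Once this description of $F(m)$ is in hand, both implications are mechanical. For $\Leftarrow$, suppose ${\cal I}(m^{1})\supseteq {\cal I}(m^{2})$ and take $m'\in F(m^{1})$; then $m'$ satisfies the equalities defining $F(m^{1})$, which form a superset of those defining $F(m^{2})$, so $m'\in F(m^{2})$. For $\Rightarrow$, assume $F(m^{1})\subseteq F(m^{2})$, so in particular $m^{1}\in F(m^{2})$; the characterization then immediately yields $\Delta m^{1}(a,b|Z)=0$ for every $\langle a,b|Z\rangle\in {\cal I}(m^{2})$, i.e.\ ${\cal I}(m^{2})\subseteq {\cal I}(m^{1})$. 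The only non-routine ingredient is the identity for $F(m)$ above; this is where the facet-defining property really matters, since without it, tight-but-non-facet-defining inequalities could in principle spoil the correspondence, but the quoted results from \cite{KVV10,KSTT12} rule this out.
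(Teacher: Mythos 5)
Your proposal is correct and follows essentially the same route as the paper's proof: both rest on the fact that the facets of $\supermo$ are exactly the sets $\{m'\in\supermo:\ \Delta m'(a,b|Z)=0\}$ for $\langle a,b|Z\rangle\in\eletri$, write $F(m)$ as the intersection of the facets containing $m$ (equivalently, the set cut out by the equalities indexed by ${\cal I}(m)$), and then read off both implications, using $m^{1}\in F(m^{1})$ for the forward direction. No substantive difference.
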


\begin{proof}
Let ${\cal F}_{*}(N)$ denote the class of facets of\/ $\supermo$. As every face is the intersection of facets containing it, one has $F(m)=\bigcap_{F\in{\cal F}_{*}(N) ,\, m\in F} F$ for every $m\in\supermo$. Here, the whole cone $\supermo$ is conventionally the intersection of the empty collection
of facets. Hence, because $m\in F(m)$ for any $m\in\supermo$, $F(m^{1})\subseteq F(m^{2})$ iff
\begin{equation}
\forall\, F\in{\cal F}_{*}(N)\qquad m^{2}\in F ~\Rightarrow~  m^{1}\in F\,.
\label{eq.face}
\end{equation}
However, the facets of\/ $\supermo$ correspond to triplets in $\eletri$, more specifically, they have the form
$F=\{ m\in\supermo :~ \Delta m(a,b|Z) =0\,\}$ for $\langle a,b|Z\rangle\in\eletri$.
Thus, \eqref{eq.face} is equivalent to $\forall\,\langle a,b|Z\rangle\in\eletri$,
$\Delta m^{2}(a,b|Z) =0 ~\Rightarrow~ \Delta m^{1}(a,b|Z)=0$, which is nothing but
${\cal I}(m^{1})\supseteq {\cal I}(m^{2})$.
\end{proof}

\begin{coro}\rm\label{cor.face}
~~$\forall\,  m^{1},m^{2}\in\supermo\quad m^{1}\sim m^{2}  ~\Leftrightarrow~
{\cal I}(m^{1})={\cal I}(m^{2}) ~\Leftrightarrow~ F(m^{1})=F(m^{2})$.\\
Hence, the equivalence classes of $\sim$ are relative interiors of faces of $\supermo$. In other words,
$$
\forall\,  m^{1},m^{2}\in\supermo\qquad m^{1}\sim m^{2}  ~\Leftrightarrow~
\left[ \exists\, F\in\facelatt \quad m^{1},m^{2}\in\inter (F)\,\right].
$$
\end{coro}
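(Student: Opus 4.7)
The plan is to derive the three-way equivalence from Lemma~\ref{lem.face} together with the semi-graphoid properties established just above it, and then deduce the relative-interior characterization from a standard polyhedral fact.

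First, I would establish $m^{1}\sim m^{2}\Leftrightarrow {\cal I}(m^{1})={\cal I}(m^{2})$. The implication ($\Rightarrow$) is immediate because ${\cal I}(m)$ is nothing but the set of elementary CI statements contained in the CI model produced by $m$. For ($\Leftarrow$), I would invoke the semi-graphoid properties recalled before Lemma~\ref{lem.face}: any structural independence model is fully determined by its elementary members, so agreement on ${\cal I}$ forces agreement on the entire CI model.

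Second, for ${\cal I}(m^{1})={\cal I}(m^{2})\Leftrightarrow F(m^{1})=F(m^{2})$, I would simply apply Lemma~\ref{lem.face} in both directions: from ${\cal I}(m^{1})={\cal I}(m^{2})$ the lemma gives $F(m^{1})\subseteq F(m^{2})$ and $F(m^{2})\subseteq F(m^{1})$, and conversely.

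Third, for the relative-interior characterization, I would rely on the standard fact from polyhedral geometry that every point $m$ of a polyhedron lies in the relative interior of a unique face of that polyhedron, and this face is precisely the smallest face containing $m$, i.e.\ $m\in\inter (F(m))$ for every $m\in\supermo$, while $m\in\inter (F)$ already forces $F=F(m)$. Applied to our setting: if $m^{1}\sim m^{2}$, then by the already proved equivalence $F:=F(m^{1})=F(m^{2})$ is a face whose relative interior contains both games; conversely, whenever $m^{1},m^{2}\in\inter (F)$ for some $F\in\facelatt$, uniqueness gives $F(m^{1})=F=F(m^{2})$, so $m^{1}\sim m^{2}$.

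I do not anticipate any serious obstacle, since the corollary is essentially a bookkeeping consequence of Lemma~\ref{lem.face}. The only mildly delicate point is that $\supermo$ is an unbounded rational polyhedral cone rather than a polytope; however, the face/relative-interior partition used in the last step is valid for arbitrary polyhedra, so no extra care is required.
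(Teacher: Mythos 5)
Your proposal is correct and follows essentially the same route as the paper: the first equivalence is the semi-graphoid determination of a structural model by its elementary statements (stated in the text just before Lemma~\ref{lem.face}), the second is Lemma~\ref{lem.face} applied in both directions, and the last step is exactly the paper's one-line observation that $\inter (F)$ consists precisely of those $m\in\supermo$ with $F(m)=F$. No gaps.
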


\begin{proof}
For $F\in\facelatt$, $\inter (F)$ is the set of all $m\in\supermo$ such that
$F(m)=F$.
\end{proof}

Another consequence of Lemma \ref{lem.face} is the following observation,
which has already been mentioned as Lemma 5.6 in \cite{stu05}.

\begin{coro}\rm\label{cor.ext-ray-co-atom} ~~
A game $m$ generates an extreme ray of $\supmogame$ iff
the CI model produced by $m$ is a co-atom in the lattice of structural independence models,
which means that the only structural model strictly containing it is the complete independence model.\footnote{The complete independence
model has\/ $\eletri$ as the set of valid elementary independence statements.}
\end{coro}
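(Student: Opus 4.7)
The plan is to deduce Corollary~\ref{cor.ext-ray-co-atom} as a direct consequence of Lemma~\ref{lem.face}, by translating ``extreme ray'' into lattice-theoretic language and then applying the anti-isomorphism. First I would observe that Lemma~\ref{lem.face} (proved for $\supermo$) transfers verbatim to $\supmogame$: the CI model $\mathcal{I}(m)$ is invariant under the standardization \eqref{eq.standard}, because $\Delta g(a,b|Z)=0$ for every modular $g$ and every triplet $\langle a,b|Z\rangle\in\eletri$, so $\mathcal{I}(m)=\mathcal{I}(m^{\star})$. Since the face lattice of $\supermo$ is canonically isomorphic to $\facelatt$ viewed through $\supmogame$ (each face of $\supermo$ is the Minkowski sum of a face of $\supmogame$ with the linear space of modular games), Lemma~\ref{lem.face} gives an order-reversing bijection between the face lattice of $\supmogame$ and the lattice of structural independence models over $N$.

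Next I would identify the two extreme elements of the correspondence. The apex $\{0\}$ of $\supmogame$ is the minimum of the face lattice; its image $\mathcal{I}(0)=\eletri$ is the complete independence model, which is the maximum of the structural model lattice. The whole cone $\supmogame$ is the maximum of the face lattice; its image (taken at any point of its relative interior, i.e., any strictly supermodular $m\in\supmogame$) is the minimum of the structural model lattice. Consequently, under any order-reversing bijection between two finite bounded lattices, atoms are mapped to co-atoms.

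Finally I would combine this with the geometric characterization of extreme rays: a non-zero $m\in\supmogame$ generates an extreme ray iff $F(m)$ is an atom of the face lattice of $\supmogame$. By the preceding paragraph, $F(m)$ is an atom iff $\mathcal{I}(m)$ is a co-atom of the structural model lattice, that is, iff the only structural independence model strictly containing $\mathcal{I}(m)$ is $\eletri$, the complete independence model. This is exactly the claim of the corollary. I do not anticipate any real obstacle in this argument; the only point requiring mild care is the bookkeeping between $\supermo$ and $\supmogame$ sketched in the first paragraph, but the invariance of $\mathcal{I}(\cdot)$ under standardization makes this routine.
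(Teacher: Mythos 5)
Your proposal is correct and follows the route the paper intends: Corollary~\ref{cor.ext-ray-co-atom} is presented there precisely as a consequence of the anti-isomorphism from Lemma~\ref{lem.face}, with extreme rays corresponding to atoms of the face lattice of the pointed cone $\supmogame$ and hence to co-atoms of the lattice of structural models. Your extra bookkeeping (invariance of ${\cal I}(\cdot)$ under adding modular games, and the identification of the face lattices of $\supermo$ and $\supmogame$) is exactly the routine detail the paper leaves implicit.
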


\begin{remark}\rm\label{rem.CI-interpret}
The class of structural independence models can alternatively be introduced in terms of
{\em submodular\/} set functions. This corresponds to the description of a probabilistic CI
structure by means of the {\em entropy function}; see Remark 4.4 in \cite{stu05}.
Specifically, given a~submodular game $r$ over $N$, $\{\, \langle a,b|Z\rangle\in\eletri \,:\ \Delta r(a,b|Z)=0\,\}$
is the respective structural independence model. Since $\Delta r(a,b|Z)\leq 0$ for any such $r$ and $\langle a,b|Z\rangle\in\eletri$
everything works like in the supermodular case.  In particular, the correspondence $r\leftrightarrow m:=-r$ is the correspondence which preserves CI interpretation.
\end{remark}

\section*{Acknowledgements}
The work on this topic has been supported from the GA\v{C}R grant project n.\ 13-20012S. Tom\'{a}\v{s} Kroupa gratefully acknowledges partial support from Marie Curie Intra-European Fellowship OASIG (PIEF-GA-2013-622645). We are indebted to our colleague Fero Mat\'{u}\v{s} for pointing our attention to a highly relevant paper \cite{ngu78}
and to Jasper De Bock for references to \cite{mey74,QdC08}.
The package CONVEX for Maple by Matthias Franz \cite{Franz09} helped us in processing the cores and their vertices.

\section*{References}

\end{document}